\patchcmd\maketitle{\def\@makefnmark{\rlap{\@textsuperscript{\normalfont\@thefnmark}}}}{}{}{}
\def\thanksAAffil#1{
  \footnotemarkAAffil\protected@xdef\@thanks{\@thanks%
        \protect\footnotetextAAffil[\the \c@footnoteAAffil]{#1}}%
}
\def\thanksANote#1{%
  \footnotemarkANote%
  \protected@xdef\@thanks{\@thanks%
        \protect\footnotetextANote[\the \c@footnoteANote]{#1}}%
}
\newenvironment{customthm}[1]
  {\innercustomthm}
  {\endinnercustomthm}
\theoremstyle{plain}
\newtheorem{thrm}{Theorem}[section]                                          
\newtheorem{prpstn}[thrm]{Proposition}                          
\newtheorem{lmm}[thrm]{Lemma}
\newtheorem{crllr}[thrm]{Corollary}
\theoremstyle{definition}
\newtheorem{dfntn}[thrm]{Definition}
\begin{document}
\title{Persistence-based Modes Inference}
\author{Hugo Henneuse \thanksAAffil{Laboratoire de Mathématiques d'Orsay, Université Paris-Saclay, Orsay, France}$^{\text{ ,}}$\thanksAAffil{DataShape, Inria Saclay, Palaiseau, France}\\ \href{mailto:hugo.henneuse@universite-paris-saclay.fr}{hugo.henneuse@universite-paris-saclay.fr}}
\maketitle
\begin{abstract}We address the problem of estimating multiple modes of a multivariate density using persistent homology, a central tool in Topological Data Analysis. We introduce a method based on the preliminary estimation of the $H_0$-persistence diagram to infer the number of modes, their locations, and the corresponding local maxima. For broad classes of piecewise-continuous functions with geometric control on discontinuities loci, we identify a critical separation threshold between modes, also interpretable in our framework in terms of modes prominence, below which modes inference is impossible and above which our procedure achieves minimax optimal rates.\end{abstract}

\section{Introduction}
Modes are simple measures to describe central tendencies of a density and one of the most used tool to process data. Modes inference finds applications in a wide variety of statistical tasks, as highlighted in the recent survey \cite{Chacon20}. Among them, modal approaches in clustering has gathered significant attention \citep{Fukunaga, Cheng95, Comanicu02, Li07, ChazalGuibasOudotPrimoz11, Chacon12, jiang17}.

The problem of mode(s) inference dates back to Parzen \cite{Parzen62} and has received considerable attention since. The question of consistency and convergence rates of estimators has occupied a central place. The bulk of work on this question is large but has mainly been concentrated on single mode estimation. For this problem, the popular approach is to consider an estimator of the form $\hat{x}=\arg \max_{x\in[0,1]^{d}} \hat{f}(x)$ with $\hat{f}$ an estimator of the density. Usually $\hat{f}$ is a  kernel estimator of the density, following Parzen's original work. This work already provides convergence rates but for univariate densities and under strong regularity assumptions (global regularity). Subsequent efforts have been made to weaken those assumptions and extend this work to multivariate densities \citep{Chernoff64, Samantha73, Konakov, Eddy80, Romano88, Tsybakov90, DonohoLiu91, Vieu96, Mokkadem03}. Notably, in \cite{DonohoLiu91}, the authors consider a general multivariate setting and weaken the assumptions to a local assumption around the mode. They suppose that the density essentially behaves around the mode as a power function, i.e there exists $\alpha,L,C>0$ such that, for all $y$ in a neighborhood of a mode $x$ :
\begin{equation}
\label{eq: power func}
C||x-y||^{\alpha}\leq f(x)-f(y)\leq L||x-y||^{\alpha}.
\end{equation}
Under this assumption, for $\alpha=2$, they show that Parzen's method achieves minimax rates. In these works a key question is how to choose the bandwidth for kernel estimation, which often involves knowing the regularity of the density. Interestingly, \cite{KlemelaMode} proposes an adaptive estimator using Lepski's method \cite{Lepski92} to overcome this issue.

An alternative, based on histogram estimation of the density, is proposed in  \cite{ariascastro2021estimation}. Under (\ref{eq: power func}), for all $1\geq \alpha>0$, they show that their estimator also achieves minimax rates. This approach seems rather isolated, but has great adaptivity and computational properties.\\\\
Another line of work, is to consider $\hat{x}=\arg \max_{i=1,...,n} \hat{f}(X_{i})$ where $X_{1},..., X_{n}$ are the observations sampled from the density, which reduces considerably computational costs. This was initially proposed in \cite{Devroye79} with $\hat{f}$ a kernel estimator. It was proved in \cite{Abraham03, Abrham04} that this estimator essentially shares the same asymptotic behavior as Parzen's estimator, convergence rates are not affected by the maximization over finite samples. In the same direction, in \cite{DasguptaKoptufe14}, the authors propose a procedure based on KNN-estimation of the density and show, under similar assumptions as \cite{DonohoLiu91}, that their estimator achieves minimax rates once again.

For multiples modes inference, the literature is scarcer.
Mean-shift and related procedures \citep{Fukunaga, Cheng95, Comanicu02, PerpinanB, PerpinanA, Li07} are widely used to infer multiple modes. Although these approaches exhibit great practical performance, we lack a theoretical understanding about their convergence properties. In this direction, some insights are given in \cite{ariascastro16a} proving near-consistency of the mean shift procedures.

Among the previously cited works on single-mode estimation, \cite{DasguptaKoptufe14} also proposes a \( k \)-nearest neighbors-based procedure for estimating multiple modes. For this more general problem, their method is shown to be minimax under the assumption that the densities are twice differentiable around the modes, with a negative definite Hessian at each mode (which in particular implies (\ref{eq: power func}), for $\alpha=2$), and that the modes are "$r$-salient,"  essentially meaning they are separated by valleys whose depth and width are both controlled by a parameter \( r > 0 \). Furthermore, \cite{jiang17} extends this approach to capture modal sets, allowing for cases where modes are not single points but entire regions. This broader method is shown to be minimax optimal under the assumption that the densities are globally Hölder continuous, behave as power functions around the modes, and that the modal sets are \( r \)-salient. To our knowledge, no other procedure has been demonstrated to possess such broad convergence properties to date.

Here we present an alternative relying on tools from Topological Data Analysis (TDA). TDA is a field that focuses on providing descriptors of the data, using tools from (algebraic) topology. One of these descriptors, persistent homology, permits to encode, in so-called "persistence diagrams", the evolution of the topology (in the homology sense) of the super-level sets of a density. Those diagrams compactly represent birth and death times of topological features. Under proper conditions, looking at $H_{0}$-persistence diagram (representing the evolution of connected components) permits to identify local maxima (birth times in super-level sets persistence diagram). This is illustrated in Figure \ref{fig:modes-PD}.\\
The link between estimation of persistence diagrams and modes detection has already been highlighted several times, see for example \cite{Bauer14} and \cite{Genovese16}. In a slightly different context, we must also mention \cite{ChazalGuibasOudotPrimoz11}. The authors propose an algorithm, \textit{ToMATo}, combining ideas from mean-shift with persistent homology to perform modal-clustering. The two main questions in those works, that we will address here, are how to estimate persistence diagrams and how to determine if a point in these estimated diagrams is significant or not, i.e. at which distance from the diagonal points are not due to noise with high probability. Reformulated in terms of modes, it is equivalent to decide if a mode is significant or not based on its prominence. Outside the sphere of TDA, this question has also received significant attention, with a rich modal-testing literature \citep{Silverman81,Hartigan85,Minotte97,ChengHall98, HallYork01,Fisher01, Duong08, Burman2009,AmeijeirasAlonso2016}.
Note that persistence diagram alone does not permit to localize modes, only to estimates their numbers and associated local maxima. Additional information needs to be extracted in order to infer their positions.
\begin{figure}[h]
    \centering
    \includegraphics[scale=0.6]{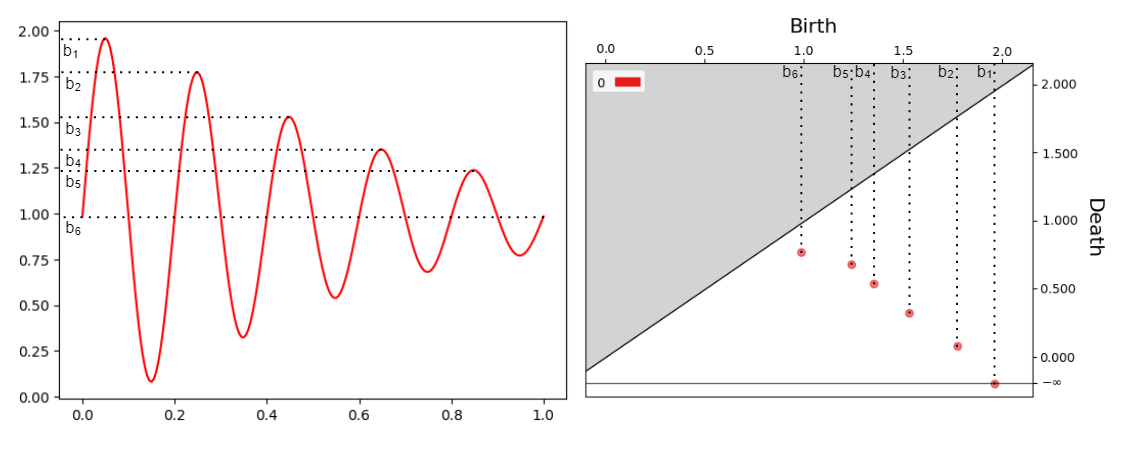}
    \caption{1D illustration of the link between local maxima and $H_{0}$ persistence diagram. The birth times $b_{1}$, ..., $b_{6}$ corresponds to the local maxima of the function.}
    \label{fig:modes-PD}
\end{figure}

The estimation of persistence diagrams of a density is an interesting question by itself. A common approach \citep[see e.g. ][]{BCL2009} in persistence diagram estimation is to use sup-norm (or other popular functional norm) stability theorem \citep{Chazal2009} to lift convergence results for density (or signal) estimation in sup norm. This approach is limited as it supposes to be able to consistently estimate the density (or the signal) in sup-norm, which will typically fail under the local assumptions designed for modes estimation cited earlier. In \cite{Bobrowski}, a different approach based on image persistence is proposed to move beyond traditional plug-in methods that rely on sup-norm stability. This framework allows for the estimation of persistence diagrams over a broad class of functions, specifically, $q-$tame bounded functions. However, the generality of this setting comes at a cost: the work does not provide convergence rates or establish formal consistency results, likely due to the inherent looseness of the underlying assumptions. Under the same motivation, for the Gaussian white noise model and the non-parametric regression, in \cite{Henneuse24a} we studied classes of piecewise Hölder functions tolerating irregularities that are difficult to handle for signal estimation. But on which it is still possible to infer persistence diagrams with convergence rates following the minimax one classically known on Hölder spaces. The proposed procedure in this work is based on histogram estimation of the (sub or super) level sets. We believe that these level sets estimators contain significantly more information than the one contained in persistence diagrams. In particular, we illustrate it in this work, showing that they contain the missing information to move from persistence diagram estimation to modes inference.
\subsection*{Contribution}
We propose a new framework for mode inference that combines geometric conditions from the TDA literature with standard analytical assumptions by introducing a new class of piecewise-continuous functions, denoted as \( S_{d}(L, \alpha, \mu, R_{\mu}, C, h_{0}) \). These classes contain \((L, \alpha)\)-piecewise-Hölder-continuous densities, with discontinuity loci having geometric complexity controlled by \( \mu \) and \( R_{\mu} \), using the \(\mu\)-reach, a geometric measure introduced in \cite{mureach}. We furthermore require that the densities within these classes satisfy \eqref{eq: power func} on \( h_{0} \)-neighborhoods of the modes. This provides control over both mode separation and prominence, playing a role analogous to the \( r \)-saliency condition introduced in \cite{DasguptaKoptufe14} and \cite{jiang17}. It is worth noting that the regularity conditions we assume are globally weaker than those imposed in \cite{jiang17}, as we do not assume global continuity. They are also locally weaker than those in \cite{DasguptaKoptufe14}, as we allow non-differentiability and even discontinuities around the modes.

 In this context, we propose a procedure based on $H_{0}$-persistence diagrams and ``rough'' super level sets estimation to infer the number of modes, their locations and associated local maxima. The procedure can be outlined as :
 \begin{itemize}
     \item Estimate the superlevel sets of the density using histograms, and apply a thickening step (depending on the parameter \( \mu \), see Section~\ref{sec: Proc and result}). This yields a collection of "rough" superlevel set estimators \( \hat{\mathcal{F}}_{\lambda} \), for \( \lambda \in \mathbb{R} \). The thickening step ensures the correct recovery of the connectivity of the level sets.
     \item Compute the associated $H_{0}-$persistence diagram $\widehat{\operatorname{dgm}(f)}$.
     \item Construct $\overline{\operatorname{dgm}(f)}$ by removing the points in $\widehat{\operatorname{dgm}(f)}$ at distance at most $\delta$ from the diagonal (i.e. with lifetime shorter than $\delta$), where $\delta$ is chosen depending on some parameters of the model we consider. 
     \item The number of points $\hat{k}$ in $\overline{\operatorname{dgm}(f)}$ is our estimator of the number of modes. 
     \item For each $(\hat{b}_{i},\hat{d}_{i})\in\overline{\operatorname{dgm}(f)}$, we can associate a connected component $\hat{C}_{i}$ of $\hat{\mathcal{F}}_{\hat{b}_{i}}$ (see Section \ref{Section : Sup modes 1}). Take any $\hat{x}_{i}\in \hat{C}_{i}$. The collection $\{\hat{x}_{i},i\in \{1,...,\hat{k}\}\}$ is our estimator of the modes and $\{\hat{b}_{i},i\in \{1,...,\hat{k}\}\}$ our estimator of the associated local maxima.
 \end{itemize}
We study the convergence properties of the proposed procedure over the classes \( S_{d}(L,\alpha,\mu, R_{\mu},C,h_{0}) \). More precisely, we identify thresholds on mode separation and prominence, both of which can be formalized in our framework as a threshold condition on the parameter \( h_0 \). Below this threshold, mode detection is impossible (Proposition~\ref{prp: tight separation}); above it, our procedure recovers, with high probability, the exact number of modes and estimates their locations and associated maxima at minimax rates (Theorems~\ref{coroModes1} and~\ref{lowerboundsModes}).

These results are of significant interest for several reasons. First, they establish minimax statistical guarantees for our procedure within a broad framework. Notably, this framework accommodates densities with discontinuities, possibly near or at the modes, cases that typically challenge standard approaches such as mean-shift, as we demonstrate through numerical experiments. Secondly and more broadly, while the assumption that modes must be sufficiently prominent and sufficiently separated is central in the literature concerning modes estimation, for instance \cite{DasguptaKoptufe14, jiang17}, which formalized this as a \( r \)-saliency condition, the quantification of ``how much is sufficient'' has, to the best of our knowledge, not yet been addressed. Our results, in particular the identification of the threshold on \( h_0 \), provide precise insights into the minimal separation and prominence needed for consistent modes estimation.

Furthermore, in the process of establishing our results on mode inference, we also derive convergence guarantees for our estimators of the \( H_0 \) persistence diagram (Propositions~\ref{MainProp}, \ref{Lower Bound Diag}, and Corollary~\ref{estimation borne sup}), thereby extending the results obtained in our earlier work~\cite{Henneuse24a}. These findings are of independent interest, as they demonstrate that consistent estimation of \( H_0 \) persistence diagrams is possible under significantly weaker geometric conditions than those previously required. In particular, the proposed method, based on thickened histograms, is able to recover consistently persistence diagrams in cases where the plug-in histogram estimator from our earlier work fails.\\\\
The paper is organized as follows. Section \ref{Appendix: Background} reviews the geometric and topological background relevant to our work. Section \ref{Framework} introduces the formal framework under study. Section \ref{sec: Proc and result} presents our procedure along with the main theoretical results. Section \ref{sec: proofs} contains the proofs of these results. Section \ref{Section: Numerical illustration} provides numerical illustrations. Additional results and the proofs of technical lemmas are deferred to the Appendix.
 \section{Background}
\label{Appendix: Background}
This section provides the necessary background to follow this paper.
\subsection{Distance function, generalized gradient and $\mu$-reach}
In this section, we recall some concepts from geometric measure theory involved in this work. For a set $K\subset[0,1]^{d}$, we denote $\overline{K}$ its adherence and $\partial K$ its boundary. Let $K\subset[0,1]^{d}$ a compact set, the \textbf{distance function} $d_{K}$ is given by,
$$d_{K}:x\mapsto \min\limits_{y\in K}||x-y||_{2}.$$
Generally the distance function is not differentiable everywhere, but we can define a generalized gradient function that matches the gradient at points where the distance function is differentiable. Consider the set of closest points to $x$ in $K$,
$$\Gamma_K(x)=\{y \in K \mid ||x-y||_{2}=d_{K}(x)\}$$
 and for $x\in[0,1]^{d}\setminus K$, let $\Theta_{K}(x)$ the center of the unique smallest ball enclosing $\Gamma_K(x)$, the \textbf{generalized gradient function} $\nabla_{d_K}(x)$ is then defined as,
$$\nabla_{d_K}(x)=\frac{x-\Theta_{K}(x)}{d_{k}(x)}.$$
 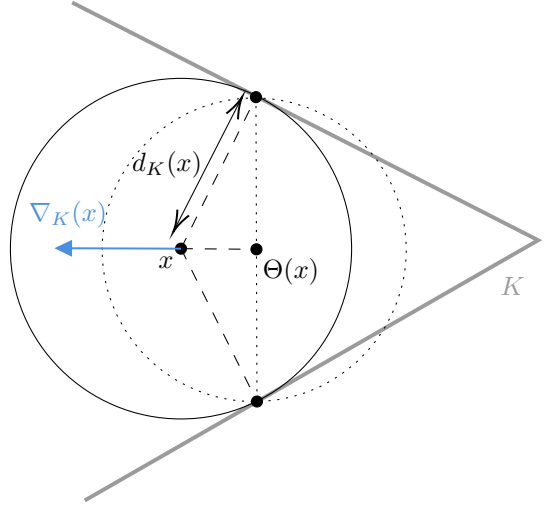
\begin{figure}[H]
    \centering
    \begin{tikzpicture}[x=0.75pt,y=0.75pt,yscale=-1,xscale=1]

\draw [color={rgb, 255:red, 155; green, 155; blue, 155 }  ,draw opacity=1 ][line width=1.5]    (137.98,265.87) -- (366.67,134.5) -- (131.67,14.53) ;
\draw   (100.5,138.76) .. controls (100.5,91.28) and (139.02,52.8) .. (186.54,52.8) .. controls (234.06,52.8) and (272.59,91.28) .. (272.59,138.76) .. controls (272.59,186.23) and (234.06,224.71) .. (186.54,224.71) .. controls (139.02,224.71) and (100.5,186.23) .. (100.5,138.76) -- cycle ;
\draw  [fill={rgb, 255:red, 0; green, 0; blue, 0 }  ,fill opacity=1 ] (183.76,138.76) .. controls (183.76,137.22) and (185,135.97) .. (186.54,135.97) .. controls (188.08,135.97) and (189.33,137.22) .. (189.33,138.76) .. controls (189.33,140.29) and (188.08,141.54) .. (186.54,141.54) .. controls (185,141.54) and (183.76,140.29) .. (183.76,138.76) -- cycle ;
\draw  [fill={rgb, 255:red, 0; green, 0; blue, 0 }  ,fill opacity=1 ] (221.55,62.29) .. controls (221.55,60.75) and (222.8,59.5) .. (224.34,59.5) .. controls (225.88,59.5) and (227.13,60.75) .. (227.13,62.29) .. controls (227.13,63.82) and (225.88,65.07) .. (224.34,65.07) .. controls (222.8,65.07) and (221.55,63.82) .. (221.55,62.29) -- cycle ;
\draw  [fill={rgb, 255:red, 0; green, 0; blue, 0 }  ,fill opacity=1 ] (222.07,215.9) .. controls (222.07,214.36) and (223.32,213.12) .. (224.86,213.12) .. controls (226.4,213.12) and (227.64,214.36) .. (227.64,215.9) .. controls (227.64,217.44) and (226.4,218.68) .. (224.86,218.68) .. controls (223.32,218.68) and (222.07,217.44) .. (222.07,215.9) -- cycle ;
\draw  [dash pattern={on 4.5pt off 4.5pt}]  (186.54,138.76) -- (224.86,215.9) ;
\draw  [dash pattern={on 4.5pt off 4.5pt}]  (186.54,138.76) -- (224.34,62.29) ;
\draw  [dash pattern={on 0.84pt off 2.51pt}] (146.79,139.31) .. controls (146.79,97.01) and (181.12,62.72) .. (223.46,62.72) .. controls (265.81,62.72) and (300.13,97.01) .. (300.13,139.31) .. controls (300.13,181.61) and (265.81,215.9) .. (223.46,215.9) .. controls (181.12,215.9) and (146.79,181.61) .. (146.79,139.31) -- cycle ;
\draw  [dash pattern={on 0.84pt off 2.51pt}]  (224.34,62.29) -- (224.86,215.9) ;
\draw  [fill={rgb, 255:red, 0; green, 0; blue, 0 }  ,fill opacity=1 ] (221.81,139.09) .. controls (221.81,137.56) and (223.06,136.31) .. (224.6,136.31) .. controls (226.14,136.31) and (227.39,137.56) .. (227.39,139.09) .. controls (227.39,140.63) and (226.14,141.88) .. (224.6,141.88) .. controls (223.06,141.88) and (221.81,140.63) .. (221.81,139.09) -- cycle ;
\draw [color={rgb, 255:red, 74; green, 144; blue, 226 }  ,draw opacity=1 ][line width=0.75]    (186.54,138.76) -- (125.67,138.54) ;
\draw [shift={(122.67,138.53)}, rotate = 0.2] [fill={rgb, 255:red, 74; green, 144; blue, 226 }  ,fill opacity=1 ][line width=0.08]  [draw opacity=0] (8.93,-4.29) -- (0,0) -- (8.93,4.29) -- cycle    ;
\draw  [dash pattern={on 4.5pt off 4.5pt}]  (186.54,138.76) -- (224.6,139.09) ;
\draw    (216.42,64.05) -- (183.58,128.48) ;
\draw [shift={(182.67,130.27)}, rotate = 297.01] [color={rgb, 255:red, 0; green, 0; blue, 0 }  ][line width=0.75]    (10.93,-3.29) .. controls (6.95,-1.4) and (3.31,-0.3) .. (0,0) .. controls (3.31,0.3) and (6.95,1.4) .. (10.93,3.29)   ;
\draw [shift={(217.33,62.27)}, rotate = 117.01] [color={rgb, 255:red, 0; green, 0; blue, 0 }  ][line width=0.75]    (10.93,-3.29) .. controls (6.95,-1.4) and (3.31,-0.3) .. (0,0) .. controls (3.31,0.3) and (6.95,1.4) .. (10.93,3.29)   ;

\draw (346.41,151.58) node [anchor=north west][inner sep=0.75pt]  [font=\small,color={rgb, 255:red, 155; green, 155; blue, 155 }  ,opacity=1 ]  {$K$};
\draw (173.88,141.49) node [anchor=north west][inner sep=0.75pt]  [font=\small]  {$x$};
\draw (226.6,142.49) node [anchor=north west][inner sep=0.75pt]  [font=\small]  {$\Theta ( x)$};
\draw (108.67,113.47) node [anchor=north west][inner sep=0.75pt]  [font=\small,color={rgb, 255:red, 74; green, 144; blue, 226 }  ,opacity=1 ]  {$\nabla _{d_K}( x)$};
\draw (160.67,88.4) node [anchor=north west][inner sep=0.75pt]  [font=\small]  {$d_{K}( x)$};
\end{tikzpicture}
    \caption{2D example with 2 closest points}
\end{figure}
We can now introduce the notion of $\mu-$reach \citep{mureach} that will be used to measure and control the geometric complexity of discontinuities sets (see Assumption \textbf{A3}, Section \ref{Framework}). Let $K \subset [0,1]^{d}$ a compact set and $0\leq \mu\leq 1$, its \textbf{$\mu$-reach} is defined by,
\begin{equation}
\label{def mu-reach}
\operatorname{reach}_\mu(K)=\inf \left\{r \mid \inf _{d_K^{-1}(r)}\left\|\nabla_{d_K}\right\|_{2}<\mu\right\}.
\end{equation}
 The $\mu-$reach is positive for a large class of sets. For example, all piecewise linear compact sets have positive $\mu-$reach (for some $\mu>0$). For $\mu=1$, this simply corresponds to the reach, a standard curvature measure, introduced in \cite{Fed59} and used in our earlier work \citep{Henneuse24a} to control the geometry of the discontinuities. In our context, using the $\mu-$reach instead of the reach to measure the complexity of the discontinuities allows considering significantly wider classes of irregular densities, tolerating corners and multiple points (i.e. self intersections) in their discontinuities set.
 
Another way to understand the $\mu-$reach is to see it as the distance from the $\mu-$medial axis.  The \textbf{$\mu-$medial axis} of a set $K\subset[0,1]^{d}$ is defined by,
    $$\operatorname{Med}_{\mu}(K)=\left\{x\in[0,1]^{d}\quad|\quad\|\nabla_{d_K}(x)\|_{2}<\mu\right\}.$$
The $\mu-$reach of $K$ is then equal to $d_{2}(K,\overline{\operatorname{Med}_{\mu}(K)})$.
\begin{figure}[H]
\centering
\begin{subfigure}{.5\textwidth}
  \centering
\begin{tikzpicture}[x=0.75pt,y=0.75pt,yscale=-1,xscale=1]

\draw   (248.6,57.2) -- (442,57.2) -- (442,250.6) -- (248.6,250.6) -- cycle ;
\draw  [dash pattern={on 4.5pt off 4.5pt}] (316,88.6) .. controls (331,70.6) and (388,54) .. (368,74) .. controls (348,94) and (346,103.6) .. (366,133.6) .. controls (386,163.6) and (309,202.6) .. (308,168.6) .. controls (307,134.6) and (258,120.6) .. (257,108.6) .. controls (256,96.6) and (255.5,92.15) .. (266.75,83.38) .. controls (278,74.6) and (301,106.6) .. (316,88.6) -- cycle ;
\draw  [dash pattern={on 4.5pt off 4.5pt}] (382,142.6) .. controls (402,132.6) and (445,131.6) .. (433,148.6) .. controls (421,165.6) and (439,209.6) .. (427,222.6) .. controls (415,235.6) and (408,206.2) .. (390,235.6) .. controls (372,265) and (362,152.6) .. (382,142.6) -- cycle ;
\draw  [dash pattern={on 4.5pt off 4.5pt}] (268,176) .. controls (278,171) and (312,170.6) .. (292,190.6) .. controls (272,210.6) and (332,180.6) .. (352,210.6) .. controls (372,240.6) and (301,256.6) .. (281,226.6) .. controls (261,196.6) and (253,208.5) .. (253,198.5) .. controls (253,188.5) and (258,181) .. (268,176) -- cycle ;
\draw  [dash pattern={on 4.5pt off 4.5pt}] (379.4,189.8) .. controls (379.4,178.31) and (388.71,169) .. (400.2,169) .. controls (411.69,169) and (421,178.31) .. (421,189.8) .. controls (421,201.29) and (411.69,210.6) .. (400.2,210.6) .. controls (388.71,210.6) and (379.4,201.29) .. (379.4,189.8) -- cycle ;
\draw  [dash pattern={on 4.5pt off 4.5pt}] (348,140.6) .. controls (346,126.6) and (349,110.6) .. (322,105.6) .. controls (295,100.6) and (270,100.6) .. (283,107.6) .. controls (296,114.6) and (350,154.6) .. (348,140.6) -- cycle ;

\draw (314,109.4) node [anchor=north west][inner sep=0.75pt]    {$M_{1}$};
\draw (317,152.4) node [anchor=north west][inner sep=0.75pt]    {$M_{2}$};
\draw (295,210.4) node [anchor=north west][inner sep=0.75pt]    {$M_{3}$};
\draw (389,179.4) node [anchor=north west][inner sep=0.75pt]    {$M_{4}$};
\draw (388,91.4) node [anchor=north west][inner sep=0.75pt]    {$M_{6}$};
\draw (396,142.4) node [anchor=north west][inner sep=0.75pt]    {$M_{5}$};

\end{tikzpicture}

\caption{Positive ($1-$)reach}

\end{subfigure}%
\begin{subfigure}{.5\textwidth}
\centering
\begin{tikzpicture}[x=0.75pt,y=0.75pt,yscale=-1,xscale=1]

\draw  [draw opacity=0][fill={rgb, 255:red, 208; green, 2; blue, 27 }  ,fill opacity=1 ] (295.36,225.6) .. controls (295.36,223.81) and (296.81,222.36) .. (298.6,222.36) .. controls (300.39,222.36) and (301.84,223.81) .. (301.84,225.6) .. controls (301.84,227.39) and (300.39,228.84) .. (298.6,228.84) .. controls (296.81,228.84) and (295.36,227.39) .. (295.36,225.6) -- cycle ;
\draw  [draw opacity=0][fill={rgb, 255:red, 208; green, 2; blue, 27 }  ,fill opacity=1 ] (245.36,200.6) .. controls (245.36,198.81) and (246.81,197.36) .. (248.6,197.36) .. controls (250.39,197.36) and (251.84,198.81) .. (251.84,200.6) .. controls (251.84,202.39) and (250.39,203.84) .. (248.6,203.84) .. controls (246.81,203.84) and (245.36,202.39) .. (245.36,200.6) -- cycle ;
\draw  [draw opacity=0][fill={rgb, 255:red, 208; green, 2; blue, 27 }  ,fill opacity=1 ] (295.36,250.6) .. controls (295.36,248.81) and (296.81,247.36) .. (298.6,247.36) .. controls (300.39,247.36) and (301.84,248.81) .. (301.84,250.6) .. controls (301.84,252.39) and (300.39,253.84) .. (298.6,253.84) .. controls (296.81,253.84) and (295.36,252.39) .. (295.36,250.6) -- cycle ;
\draw  [draw opacity=0][fill={rgb, 255:red, 208; green, 2; blue, 27 }  ,fill opacity=1 ] (351.76,160.6) .. controls (351.76,158.81) and (353.21,157.36) .. (355,157.36) .. controls (356.79,157.36) and (358.24,158.81) .. (358.24,160.6) .. controls (358.24,162.39) and (356.79,163.84) .. (355,163.84) .. controls (353.21,163.84) and (351.76,162.39) .. (351.76,160.6) -- cycle ;
\draw  [draw opacity=0][fill={rgb, 255:red, 208; green, 2; blue, 27 }  ,fill opacity=1 ] (295.36,200.6) .. controls (295.36,198.81) and (296.81,197.36) .. (298.6,197.36) .. controls (300.39,197.36) and (301.84,198.81) .. (301.84,200.6) .. controls (301.84,202.39) and (300.39,203.84) .. (298.6,203.84) .. controls (296.81,203.84) and (295.36,202.39) .. (295.36,200.6) -- cycle ;
\draw  [draw opacity=0][fill={rgb, 255:red, 208; green, 2; blue, 27 }  ,fill opacity=1 ] (288.47,119.47) .. controls (288.47,117.68) and (289.92,116.23) .. (291.71,116.23) .. controls (293.5,116.23) and (294.96,117.68) .. (294.96,119.47) .. controls (294.96,121.26) and (293.5,122.71) .. (291.71,122.71) .. controls (289.92,122.71) and (288.47,121.26) .. (288.47,119.47) -- cycle ;
\draw  [draw opacity=0][fill={rgb, 255:red, 208; green, 2; blue, 27 }  ,fill opacity=1 ] (336.76,57.6) .. controls (336.76,55.81) and (338.21,54.36) .. (340,54.36) .. controls (341.79,54.36) and (343.24,55.81) .. (343.24,57.6) .. controls (343.24,59.39) and (341.79,60.84) .. (340,60.84) .. controls (338.21,60.84) and (336.76,59.39) .. (336.76,57.6) -- cycle ;
\draw  [draw opacity=0][fill={rgb, 255:red, 208; green, 2; blue, 27 }  ,fill opacity=1 ] (245.33,111.59) .. controls (245.33,109.8) and (246.78,108.35) .. (248.57,108.35) .. controls (250.36,108.35) and (251.81,109.8) .. (251.81,111.59) .. controls (251.81,113.38) and (250.36,114.83) .. (248.57,114.83) .. controls (246.78,114.83) and (245.33,113.38) .. (245.33,111.59) -- cycle ;
\draw  [draw opacity=0][fill={rgb, 255:red, 208; green, 2; blue, 27 }  ,fill opacity=1 ] (423.62,127.5) .. controls (423.62,125.71) and (425.07,124.26) .. (426.86,124.26) .. controls (428.65,124.26) and (430.1,125.71) .. (430.1,127.5) .. controls (430.1,129.29) and (428.65,130.74) .. (426.86,130.74) .. controls (425.07,130.74) and (423.62,129.29) .. (423.62,127.5) -- cycle ;
\draw   (248.6,57.2) -- (442,57.2) -- (442,250.6) -- (248.6,250.6) -- cycle ;
\draw  [dash pattern={on 4.5pt off 4.5pt}]  (250,112) .. controls (314,143.6) and (300,87.6) .. (340,57.6) ;
\draw  [dash pattern={on 4.5pt off 4.5pt}]  (248.6,200.6) -- (298.6,200.6) ;
\draw  [dash pattern={on 4.5pt off 4.5pt}]  (298.6,200.6) -- (298.6,250.6) ;
\draw  [dash pattern={on 4.5pt off 4.5pt}] (359,197.6) .. controls (369,202.6) and (355.5,247.2) .. (407.5,157.6) .. controls (459.5,68) and (401,179) .. (421,209) .. controls (441,239) and (367,259.6) .. (347,229.6) .. controls (327,199.6) and (349,192.6) .. (359,197.6) -- cycle ;
\draw  [dash pattern={on 4.5pt off 4.5pt}] (366.5,74.6) .. controls (386.5,64.6) and (423,93.6) .. (403,113.6) .. controls (383,133.6) and (375,190.6) .. (355,160.6) .. controls (335,130.6) and (346.5,84.6) .. (366.5,74.6) -- cycle ;
\draw  [dash pattern={on 4.5pt off 4.5pt}]  (291.71,119.47) .. controls (346.26,139.6) and (315,190.6) .. (355,160.6) ;
\draw  [dash pattern={on 4.5pt off 4.5pt}]  (355,160.6) .. controls (338.5,190.6) and (326.5,203.6) .. (298.6,225.6) ;

\draw (268,72.4) node [anchor=north west][inner sep=0.75pt]    {$M_{1}$};
\draw (265,215.4) node [anchor=north west][inner sep=0.75pt]    {$M_{2}$};
\draw (267,150.4) node [anchor=north west][inner sep=0.75pt]    {$M_{3}$};
\draw (359,101.4) node [anchor=north west][inner sep=0.75pt]    {$M_{4}$};
\draw (371,212.4) node [anchor=north west][inner sep=0.75pt]    {$M_{5}$};
\draw (414,76.4) node [anchor=north west][inner sep=0.75pt]    {$M_{6}$};

\end{tikzpicture}

  \caption{Positive $\mu-$reach ($\mu$ small)}
  
\end{subfigure}
\caption{(a) displays a partition $M_{1}$,..., $M_{6}$ such that $\operatorname{reach}_{1}\left(\partial M_{1}\cup...\cup \partial M_{6}\right)>0$. (b) displays a partition $M_{1}$,..., $M_{6}$ such that $\operatorname{reach}_{1}\left(\partial M_{1}\cup...\cup \partial M_{6}\right)=0$ (in red are highlighted problematic points) but for sufficiently small $\mu>0$, $\operatorname{reach}_{\mu}\left(\partial M_{1}\cup...\cup \partial M_{6}\right)>0$. }

\end{figure}
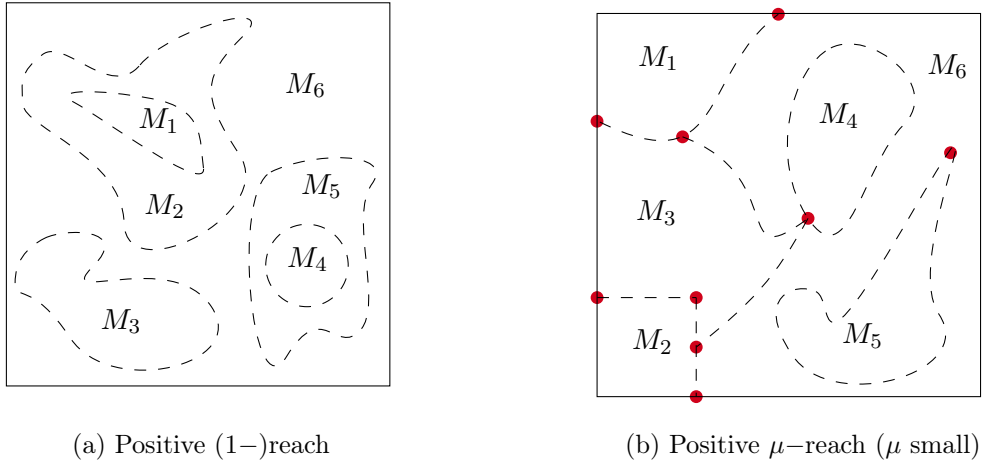
\begin{figure}[H]
    \centering
    \begin{tikzpicture}[x=0.75pt,y=0.75pt,yscale=-1,xscale=1]

\draw    (132.77,246.69) -- (230.55,158.28) ;
\draw    (230.55,158.28) -- (317.89,246.69) ;
\draw    (129.92,70.58) .. controls (186.88,136.8) and (160.3,163.65) .. (230.55,158.28) .. controls (300.81,152.91) and (289.98,82.81) .. (323.59,13.31) ;
\draw [color={rgb, 255:red, 155; green, 155; blue, 155 }  ,draw opacity=1 ][line width=1.5]    (129.92,70.58) .. controls (238.15,124.27) and (264.03,76.85) .. (323.59,13.31) ;
\draw [color={rgb, 255:red, 155; green, 155; blue, 155 }  ,draw opacity=1 ][line width=1.5]    (323.59,13.31) -- (317.89,246.69) ;
\draw [color={rgb, 255:red, 155; green, 155; blue, 155 }  ,draw opacity=1 ][line width=1.5]    (132.77,246.69) -- (317.89,246.69) ;
\draw [color={rgb, 255:red, 155; green, 155; blue, 155 }  ,draw opacity=1 ][line width=1.5]    (129.92,70.58) -- (132.77,246.69) ;
\draw [color={rgb, 255:red, 74; green, 144; blue, 226 }  ,draw opacity=1 ][line width=3]    (165.05,123.2) .. controls (178.09,138.65) and (174.8,165.73) .. (230.55,158.28) .. controls (286.31,150.82) and (286.8,120.9) .. (298.84,83.41) ;
\draw  [draw opacity=0][fill={rgb, 255:red, 155; green, 155; blue, 155 }  ,fill opacity=1 ] (128.91,70.58) .. controls (128.91,70.05) and (129.36,69.61) .. (129.92,69.61) .. controls (130.48,69.61) and (130.94,70.05) .. (130.94,70.58) .. controls (130.94,71.12) and (130.48,71.55) .. (129.92,71.55) .. controls (129.36,71.55) and (128.91,71.12) .. (128.91,70.58) -- cycle ;
\draw  [color={rgb, 255:red, 208; green, 2; blue, 27 }  ,draw opacity=1 ][fill={rgb, 255:red, 208; green, 2; blue, 27 }  ,fill opacity=1 ] (228.63,158.28) .. controls (228.63,157.28) and (229.49,156.46) .. (230.55,156.46) .. controls (231.62,156.46) and (232.48,157.28) .. (232.48,158.28) .. controls (232.48,159.28) and (231.62,160.09) .. (230.55,160.09) .. controls (229.49,160.09) and (228.63,159.28) .. (228.63,158.28) -- cycle ;
\draw  [draw opacity=0][fill={rgb, 255:red, 155; green, 155; blue, 155 }  ,fill opacity=1 ] (131.76,246.69) .. controls (131.76,246.15) and (132.21,245.72) .. (132.77,245.72) .. controls (133.33,245.72) and (133.78,246.15) .. (133.78,246.69) .. controls (133.78,247.22) and (133.33,247.66) .. (132.77,247.66) .. controls (132.21,247.66) and (131.76,247.22) .. (131.76,246.69) -- cycle ;
\draw  [draw opacity=0][fill={rgb, 255:red, 155; green, 155; blue, 155 }  ,fill opacity=1 ] (316.88,246.69) .. controls (316.88,246.15) and (317.33,245.72) .. (317.89,245.72) .. controls (318.45,245.72) and (318.91,246.15) .. (318.91,246.69) .. controls (318.91,247.22) and (318.45,247.66) .. (317.89,247.66) .. controls (317.33,247.66) and (316.88,247.22) .. (316.88,246.69) -- cycle ;
\draw  [draw opacity=0][fill={rgb, 255:red, 155; green, 155; blue, 155 }  ,fill opacity=1 ] (322.58,13.31) .. controls (322.58,12.78) and (323.03,12.34) .. (323.59,12.34) .. controls (324.15,12.34) and (324.6,12.78) .. (324.6,13.31) .. controls (324.6,13.85) and (324.15,14.28) .. (323.59,14.28) .. controls (323.03,14.28) and (322.58,13.85) .. (322.58,13.31) -- cycle ;

\draw (329.4,215.2) node [anchor=north west][inner sep=0.75pt]  [font=\small]  {$\textcolor[rgb]{0.61,0.61,0.61}{K}$};
\draw (209.73,201.47) node [anchor=north west][inner sep=0.75pt]  [font=\small]  {$\operatorname{Med}_{1}( K)$};
\draw (253.73,159.47) node [anchor=north west][inner sep=0.75pt]  [font=\small,color={rgb, 255:red, 74; green, 144; blue, 226 }  ,opacity=1 ]  {$\textcolor[rgb]{0.29,0.56,0.89}{\operatorname{Med}_{\mu }( K)}$};
\draw (188.73,129.47) node [anchor=north west][inner sep=0.75pt]  [font=\small,color={rgb, 255:red, 74; green, 144; blue, 226 }  ,opacity=1 ]  {$\textcolor[rgb]{0.82,0.01,0.11}{\operatorname{Med}_{0}( K)}$};

\end{tikzpicture}
\caption{Illustration of $\mu-$medial axis for a set $K$. In black is represented the $1-$medial axis, in red the $0-$medial axis and in blue the $\mu-$medial axis for a small $0<\mu<1/2$.}
\end{figure}
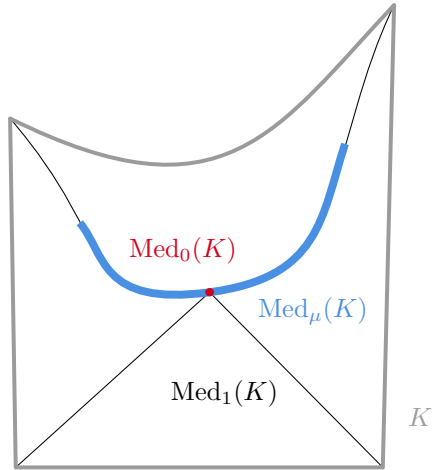
\subsection{Filtration, persistence module and $H_{0}$-persistence diagram}
In this section, we briefly present some notions related to persistent homology. Persistent homology encodes the evolution of topological features (in the homology sense) along a family of nested spaces, called \textbf{filtration}. Moving along indices, topological features (connected components, cycles, cavities, ...) can appear or die (existing connected components merge, cycles or cavities are filled, ...). In this paper, we focus on $H_{0}-$persistent homology, that describes the evolution of connectivity. For a broader overview and visual illustrations of persistent homology, we recommend \cite{chazal2021introduction}. For detailed and rigorous constructions, see \cite{chazal2013}. Additionally, since the construction discussed here involves (singular) homology, the reader can refer to \cite{Hatcher}.\\\\
The typical filtration that we will consider in this paper is, for a function $f:\mathbb{R}^{d}\rightarrow\mathbb{R}$, the family of superlevel sets $\left(\mathcal{F}_{\lambda}\right)_{\lambda\in\mathbb{R}}=\left(f^{-1}([\lambda,+\infty[)\right)_{\lambda\in\mathbb{R}}$. The associated family of homology groups of degree $0$, $\mathbb{V}_{f,0}=\left(H_{0}\left(\mathcal{F}_{\lambda}\right)\right)_{\lambda\in\mathbb{R}}$, equipped with $v_{\lambda}^{\lambda^{'}}$ the linear map induced by the inclusion $\mathcal{F}_{\lambda}\subset\mathcal{F}_{\lambda^{\prime}}$, for all $\lambda>\lambda^{'}$ forms a \textbf{persistence module}. To be more precise, in this paper, $H_{0}(.)$ is the singular homology functor in degree $0$ with coefficients in a field (typically $\mathbb{Z}/2\mathbb{Z}$). Hence, $H_{0}\left(\mathcal{F}_{\lambda}\right)$ is a vector space.\\\\
if, for all $\lambda>\lambda^{\prime} \in \mathbb{R}$, $\operatorname{rank}(v_\lambda^{\lambda^{\prime}})$ is finite, the module is said to be \textbf{$q-$tame}. It is then possible to show that the algebraic structure of the persistence module encodes exactly the evolution of the topological features along the filtration. For details, we encourage the reader to look at \cite{chazal2013}. Furthermore, the algebraic structure of such a persistence module can be summarized by a collection $\{(b_{i},d_{i}), i\in I\}\subset\overline{\mathbb{R}}^{2}$, which defines the \textbf{persistence diagram}. Following previous remarks, for $H_{0}-$persistent homology, $b_{i}$ corresponds to the birth time of a connected component, $d_{i}$ to its death time and $d_{i}-b_{i}$ to its lifetime.
\begin{figure}[H]
\centering
 \includegraphics[scale=0.55]{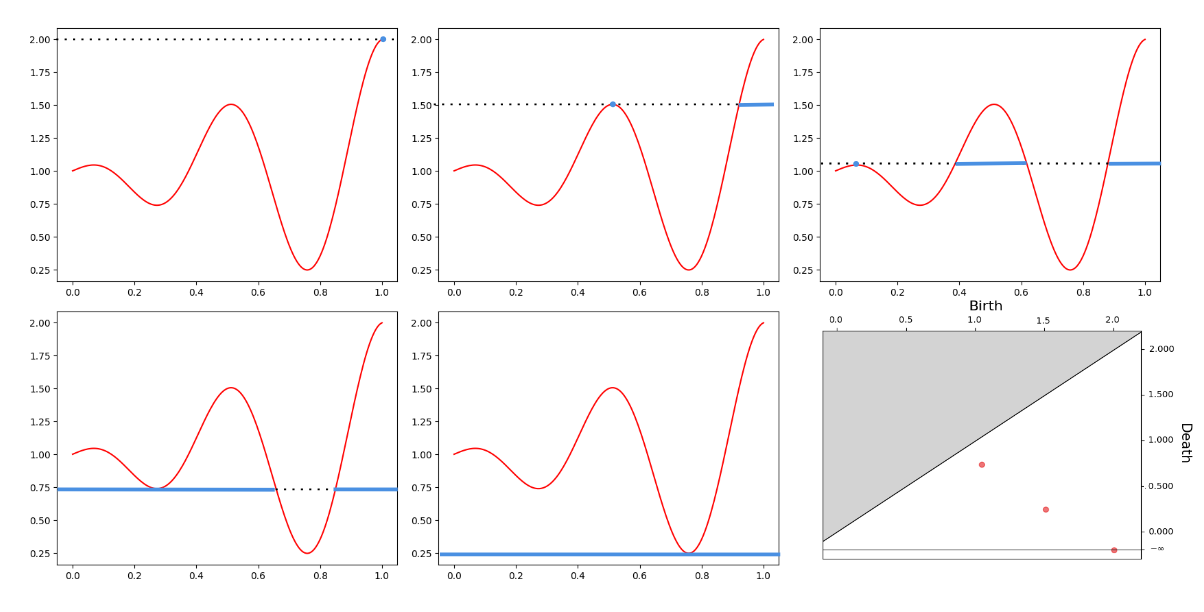}
\caption{Super level sets filtration of $f(x)=x\cos(4\pi x)$ over $[0,1]$ and the associated $H_{0}$-persistence diagram.}
\end{figure}
 To compare persistence diagrams, a popular distance, especially in statistical contexts, is the \textbf{bottleneck distance}, defined for two persistence diagrams $D_{1}$ and $D_{2}$ by,
$$d_{b}\left(D_{1},D_{2}\right)=\underset{\gamma\in\Gamma}{\inf}\underset{p\in D_{1}}{\sup}||p-\gamma(p)||_{\infty}$$
with $\Gamma$ the set of all bijections between $D_{1}$ and $D_{2}$ both enriched with the diagonal $\Delta=\{(x,x)\text{ s.t. }x\in\mathbb{R}^{2}\}$. The addition of $\Delta$ allows comparing diagrams with different cardinalities. \\\\
Now, we present the algebraic stability theorem for the bottleneck distance. This theorem was the key for proving upper bounds in \cite{Henneuse24a}, it will be also employed in this paper. This theorem relies on interleaving between modules. Let $f:[0,1]^{d}\rightarrow \mathbb{R}$ and $g:[0,1]^{d}\rightarrow \mathbb{R}$, the associated $H_{0}-$persistence modules for the superlevel sets filtrations, denoted respectively $\mathbb{V}$ and $\mathbb{W}$, are said to be \textbf{$\varepsilon$-interleaved} if there exist two families of linear maps $\phi=\left(\phi_{\lambda}\right)_{\lambda\mathbb{R}}$ and $\psi=\left(\psi_{\lambda}\right)_{\lambda\mathbb{R}}$ (which we will refer to as morphisms between persistence modules) where $\phi_{\lambda}:\mathbb{V}_{\lambda}\rightarrow\mathbb{W}_{\lambda-\varepsilon}$, $\psi_{\lambda}:\mathbb{V}_{\lambda}\rightarrow\mathbb{W}_{\lambda-\varepsilon}$, and for all $\lambda>\lambda^{'}$ the following diagrams commute,
\begin{equation}
\begin{tikzcd}
	{\mathbb{V}_{\lambda }} && {\mathbb{V}_{\lambda ^{'}}} & {\mathbb{W}_{\lambda }} && {\mathbb{W}_{\lambda ^{'}}} \\
	{\mathbb{W}_{\lambda -\varepsilon}} && {\mathbb{W}_{\lambda '-\varepsilon}} & {\mathbb{V}_{\lambda -\varepsilon}} && {\mathbb{V}_{\lambda '-\varepsilon}} \\
	{\mathbb{V}_{\lambda }} && {\mathbb{V}_{\lambda-2\varepsilon}} & {\mathbb{W}_{\lambda}} && {\mathbb{W}_{\lambda-2\varepsilon}} \\
	& {\mathbb{W}_{\lambda -\varepsilon}} &&& {\mathbb{V}_{\lambda -\varepsilon}}
	\arrow["{\phi _{\lambda }}"', from=1-1, to=2-1]
	\arrow["{\phi _{\lambda ^{'}}}", from=1-3, to=2-3]
	\arrow["{w_{\lambda -\varepsilon}^{\lambda ^{'} -\varepsilon}}"', from=2-1, to=2-3]
	\arrow["{w_{\lambda }^{\lambda ^{'}}}", from=1-4, to=1-6]
	\arrow["{\psi_{\lambda }}"', from=1-4, to=2-4]
	\arrow["{v_{\lambda -\varepsilon}^{\lambda ^{'} -\varepsilon}}"', from=2-4, to=2-6]
	\arrow["{\psi_{\lambda ^{'}}}", from=1-6, to=2-6]
	\arrow["{v_{\lambda }^{\lambda ^{'}-2\varepsilon}}", from=3-1, to=3-3]
	\arrow["{\phi _{\lambda }}"', from=3-1, to=4-2]
	\arrow["{\psi_{\lambda-\varepsilon}}"', from=4-2, to=3-3]
	\arrow["{\psi_{\lambda }}"', from=3-4, to=4-5]
	\arrow["{v_{\lambda }^{\lambda ^{'}}}", from=1-1, to=1-3]
	\arrow["{w_{\lambda }^{\lambda ^{'}-2\varepsilon}}", from=3-4, to=3-6]
	\arrow["{\phi _{\lambda-\varepsilon}}"', from=4-5, to=3-6]
\end{tikzcd}
\label{interleaving def}
\end{equation}
\begin{customthm}{ \citep[]["algebraic stability"]{Chazal2009}}
Let $\mathbb{V}$ and $\mathbb{W}$ two $q-$tame persistence modules. If  $\mathbb{V}$ and $\mathbb{W}$  are $\varepsilon-$interleaved then,
$$d_{b}\left(\operatorname{dgm}(\mathbb{V}),\operatorname{dgm}(\mathbb{W})\right)\leq \varepsilon.$$
\end{customthm}
  A direct corollary of this result, proved  earlier in special cases \citep{Baranikov94,CSEH2005}, is the sup-norm stability of persistence diagrams. We insist on the fact that this is a strictly weaker result than algebraic stability.
\begin{customthm}{["sup norm stability"]}
    Let $f$ and $g$ two real-valued function and $s\in\mathbb{N}$. If their persistence modules for the $s-th$ order homology, denoted $\mathbb{V}$ and $\mathbb{W}$, are $q-$tame, then,
$$d_{b}\left(\operatorname{dgm}\left(\mathbb{V}\right),\operatorname{dgm}\left(\mathbb{W}\right)\right)\leq ||f-g||_{\infty}.$$
\end{customthm}
 As highlighted in the introduction, the sup-norm stability is often used to upper bound the errors in bottleneck distance of "plug-in" estimators of persistence diagrams. It enables the direct translation of convergence rates in sup-norm  to convergence rates in bottleneck distance, which for regular classes of signals provides minimax upper bounds. However, when the convergence in sup-norm of the preliminary estimator is not ensured, this approach falls short. 
\begin{figure}[H]
\centering
\begin{subfigure}{.5\textwidth}
  \centering
  \includegraphics[scale=0.45]{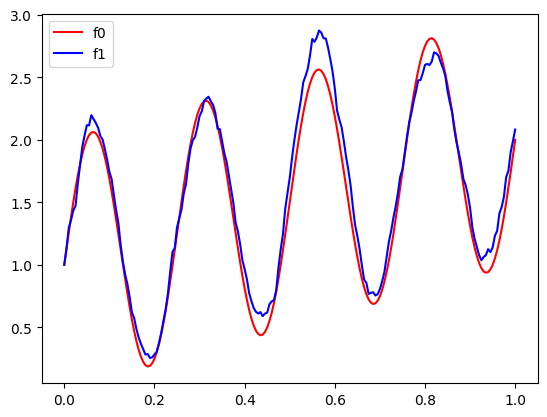}
  \caption{graphs of $f_0$ and $f_1$}
  
\end{subfigure}%
\begin{subfigure}{.5\textwidth}
  \centering
  \includegraphics[scale=0.5]{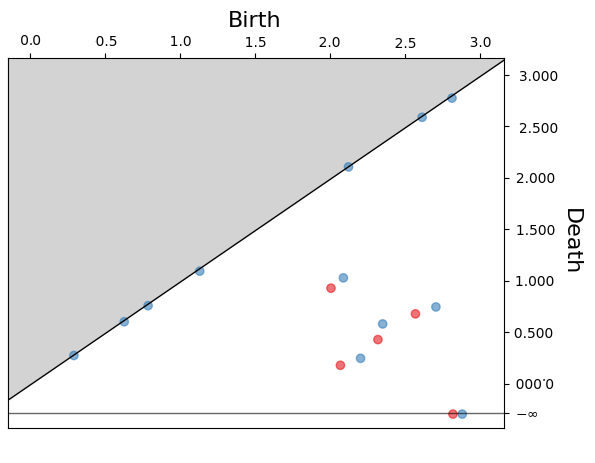}
  \caption{persistence diagrams of $f_0$ and $f_1$}
  
\end{subfigure}
\caption{1D Illustration of stability theorems.}

\end{figure}
\section{Framework}
\label{Framework}
Now, we discuss the precise settings considered in this paper. Let $f:[0,1]^{d}\rightarrow \mathbb{R}$ a probability density, we suppose that we observe $X=\{X_{1},...,X_{n}\}$ points sampled from $\mathbb{P}_{f}$.
 
Regularity assumptions on the densities are inspired by the one consider in \cite{Henneuse24a} (adapted for super level filtration). A notable difference is that we relax the positive reach assumption made there into a positive $\mu-$reach assumption. This allows considerably wilder discontinuities sets, tolerating multiple points and corners. We consider the following assumptions over $f$ :
\begin{itemize}
\item \textbf{A1.} f is a piecewise $(L,\alpha)-$Hölder-continuous probability density, i.e. there exists $M_{1},...,M_{l}$ disjoint open sets of $[0,1]^{d}$ such that,
$$\bigcup\limits_{i=1}^{l}\overline{M_{i}}=[0,1]^{d}$$ and $f|_{M_{i}}$ is in $H(L,\alpha)$, for all $ i\in\{1,...,l\}$, with,
$$H(L,\alpha)=\left\{f:[0,1]^{d}\rightarrow \mathbb{R}\text{ s.t. }|f(x)-f(y)|\leq L\|x-y\|_{2}^{\alpha},\forall x,y\in[0,1]^{d}\right\}.$$
Assumption \textbf{A1} control, on each $M_{i}$, how $f$ is ``spiky''. The smaller the value of $\alpha$, the less mass there is around modes, making it more challenging to infer local maxima accurately.
\item \textbf{A2}. $f$ satisfies, for all $ x_{0}\in [0,1]^{d}$,
$$\underset{x\in \bigcup\limits_{i=1}^{l}M_{i}\rightarrow x_{0}}{\limsup}f(x)=f(x_{0}).$$
In this context, two signals, differing only on a null set, are statistically indistinguishable. Persistent homology is sensitive to point-wise irregularity, two signals differing only on a null set can have persistence diagrams that are arbitrarily far. Assumption \textbf{A2} prevents such scenario.
\item \textbf{A3.} Let $\mu\in]0,1]$ and $R_{\mu}>0$, for all $I\subset \{1,...,l\}$
$$\operatorname{reach}_{\mu}\left(\bigcup\limits_{i\in I}\partial M_{i}\right)\geq R_{\mu}.$$
Here $\operatorname{reach}_{\mu}$ denotes the $\mu-$reach (defined by (\ref{def mu-reach}) in Section \ref{Appendix: Background}). This can be thought of as a geometric characterization of the regularity of the discontinuity set. In a similar spirit, in \cite{Henneuse24a}, we used the reach \citep{Fed59} to control the geometry of the discontinuities. As highlighted in Section~\ref{Appendix: Background}, the reach coincides with the $\mu$-reach considered here in the special case $\mu = 1$. As previously explained, this particular case imposes several important constraints on the discontinuity set: for instance, it cannot contain self-intersections (i.e., multiple points) or corners. By considering $\mu < 1$, we are able to encompass a much broader class of discontinuity sets, including those that exhibit self-intersections and corners.
In deed, the combination of Assumptions \textbf{A3}, \textbf{A2} and \textbf{A1} ensures that at all point $x\in[0,1]^{d}$ there is a small half cone of apex $x$ and angles $2\cos^{-1}(\mu)$ on which $f$ is $(L,\alpha)-$Hölder. Following the remark made on \textbf{A1}, it gives control over the mass of the density around modes. 
\end{itemize}
The class of functions satisfying \textbf{A1}, \textbf{A2} and \textbf{A3} is denoted $S_{d}(L,\alpha,\mu,R_{\mu})$. As proved  in Appendix \ref{Appendix:q-tame}, densities in $S_{d}(L,\alpha,\mu,R_{\mu})$ have well-defined persistence diagram. We prove in this work (Proposition \ref{MainProp}) that these assumptions are sufficient to infer the $H_{0}-$persistence diagram coming from the super level sets of $f$ consistently. For modes estimation, we require an additional assumption :
\begin{itemize}
    \item \textbf{A4.} Let $h_{0}>0$, $0<C<L$ and $0<\alpha\leq 1$, for any $x$ local maxima of $f$ and $y\in B_{2}(x,h_{0})$, 
    $$C\|x-y\|^{\alpha}_{2}\leq f(x)-f(y).$$
     This assumption is common in the context of modes inference \citep{DonohoLiu91, DasguptaKoptufe14, jiang17, ariascastro2021estimation}. It has several implications. First, it ensures that local maxima are strict (modal sets are here singletons). Secondly, it ensures that the modes are well separated (at distance at least $h_{0}$) and that $f$ is not too flat around modes. The larger the value of $\alpha$, the more $f$ flattens around modes, making them more challenging to locate accurately. In particular, denote $\delta=Ch_{0}^{\alpha}$ and $\operatorname{dgm}(f)$ the $H_{0}-$persistence diagram of $f$, we have,
    $$\inf\limits_{(b,d)\in \operatorname{dgm}\left(f\right)}b-d\geq \delta.$$
    Thus, \textbf{A4} ensures that $\operatorname{dgm}(f)$ contains no point at a distance less than $\delta$ from the diagonal, i.e. the prominence of the modes is lower bounded by $\delta$.Consequently, it avoids having arbitrarily small oscillations and undetectable modes. To this extent, it is comparable to the ``$r-$saliency'' assumption from \cite{DasguptaKoptufe14} and \cite{jiang17}, in the sense that it ensures that modes are separated by ``valleys'' of controlled depth and width. Then, it also controls the number of modes that $f$ can admit, as we are working on the compact set $[0,1]^{d}$ the maximal number of modes is of order $\asymp (1/h_{0})^{d}$.
\end{itemize}
The class of functions satisfying Assumptions \textbf{A1}, \textbf{A2}, \textbf{A3} and \textbf{A4} is denoted $S_{d}(L,\alpha,\mu, R_{\mu},C,h_{0})$.
\section{Procedures descriptions and results}
\label{sec: Proc and result}
In this section, we present our estimation strategy in detail and state several associated convergence results. Section~\ref{PersEst sec} focuses on the preliminary step of estimating $H_0$ persistence diagrams, while Section~\ref{Section : modes estim} is devoted to the inference of multiple modes.
\subsection{$H_{0}-$persistence diagram estimation}
\label{PersEst sec}
This section presents our procedure for persistence diagram estimation, which serves as the foundation of our mode inference method. We prove its consistency and quantify its convergence rates over the classes $S_{d}(L,\alpha,\mu,R_{\mu})$, which will be instrumental in later sections to establish our main results on mode inference. Unlike the setting of \cite{Henneuse24a}, where the positive reach assumption allows for direct plug-in estimation via histograms, the weaker assumptions considered here render this approach insufficient, as illustrated in Figure \ref{problem mu reach H0}. The presence of self-intersections, sharp angles, or intersections with the boundary of $[0,1]^d$ in the piece boundaries may prevent histogram approximations from correctly recovering the connectivity of the superlevel set filtration, even in the absence of noise. To address these additional challenges, we introduce an extra thickening step, similarly to the methods proposed in \cite{Shin2017} and Chapter 5 of \cite{KimThesis}.

\begin{figure}[h]
    \centering
    \includegraphics[width=5 cm]{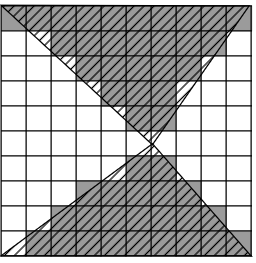}
    \caption{$\lambda-$superlevel cubical approximation for $f$ the function defined as $0$ outside the hatched area and $K$ inside (for arbitrarily large $K$). The histogram approximation fails to identify the connectivity of the two triangles for at least all $3K/4<\lambda<K$.}
    \label{problem mu reach H0}
\end{figure}

Let denote,
$$A^{b}=\left\{x\in \mathbb{R}^{d} \text{ s.t. } d_{\infty}\left(x,A\right)\leq b\right\}$$ 
with 
$$d_{\infty}\left(x,A\right)=\inf\limits_{y\in A}||x-y||_{\infty}.$$

Let $h>0$ such that $1/h$ is an integer, consider $G(h)$ the regular orthogonal grid over $[0,1]^{d}$ of step $h$ and $\mathfrak{C}_{h}$ the collection of all the closed hypercubes of side $h$ composing $G(h)$. We define,  for all $\lambda\in\mathbb{R}$, the $\lambda-$superlevel estimator,
$$\widehat{\mathcal{F}}_{\lambda}=\left(\bigcup\limits_{H\in \mathfrak{C}_{h,\lambda}}H\right)^{\lceil\sqrt{d}/\mu\rceil h} \text{ with }\mathfrak{C}_{h,\lambda}=\left\{H\in \mathfrak{C}_{h}\text{ such that }\frac{|X\cap H|}{nh^{d}}\geq \lambda\right\}$$
and $\lceil.\rceil$ the ceiling function. Let $\lambda>\lambda^{'}$, we denote,
$$\hat{v}_{\lambda}^{\lambda^{'}}:H_{0}\left(\widehat{\mathcal{F}}_{\lambda}\right)\rightarrow H_{0}\left(\widehat{\mathcal{F}}_{\lambda^{'}}\right)$$
the map induced by the inclusion $\widehat{\mathcal{F}}_{\lambda}\subset \widehat{\mathcal{F}}_{\lambda^{'}}$. Now, we introduce $\widehat{\mathbb{V}}_{f,0}$ the persistence module associated to $(H_{0}(\widehat{\mathcal{F}}_{\lambda}))_{\lambda\in \mathbb{R}}$ equipped with the collection of maps $(\widehat{v}_{\lambda,h}^{\lambda^{'}})_{\lambda>\lambda^{'}}$  and $\widehat{\operatorname{dgm}(f)}$ the associated $H_{0}-$persistence diagram. These diagrams are well-defined, as we prove in Appendix \ref{Appendix:q-tame} that $\widehat{\mathbb{V}}_{f,0}$ is $q-$tame.\\\\
\textbf{Calibration.} A natural question is how to choose the parameter $h$. Following the proof of Lemma \ref{lmm1b}, we choose $h$ such that :
\begin{equation}
\label{eq: calibration}
h^{\alpha}>\sqrt{\frac{\log\left(1/h^{d}\right)}{nh^{d}}}
\end{equation}
In particular, we can choose,
$$h\asymp\left(\frac{\log(n)}{n}\right)^{\frac{1}{d+2\alpha}}.$$
\textbf{Computation. }By construction, for all $\lambda\in\mathbb{R}$, $\widehat{\mathcal{F}}_{\lambda}$ is simply a union of cubes from the regular grid $G(h)$, thus, it can be thought as a (geometric realization of) a cubical complex or even a simplicial complex. Hence, it allows practical computation of its persistence diagram.\\\\
\textbf{Consistency.} We now present a key result that will play a central role in our proofs concerning mode inference: Proposition \ref{MainProp}, which quantifies, in probabilistic terms, the convergence rates achieved over the class $S_d(L,\alpha,\mu,R_{\mu})$.
\begin{prpstn}
\label{MainProp}
Let $h\asymp\left(\sfrac{\log(n)}{n}\right)^{\frac{1}{d+2\alpha}}$. There exists $\tilde{c}_{0}$ and $\tilde{c}_{1}$ such that, for all $A\geq 0$,
$$\sup\limits_{f\in S_{d}(L,\alpha,\mu,R_{\mu})}\mathbb{P}\left(d_{\infty}\left(\widehat{\operatorname{dgm}(f)},\operatorname{dgm}(f)\right)\geq A\left(\frac{\log(n)}{n}\right)^{\frac{\alpha}{d+2\alpha}}\right)\leq \tilde{c}_{0}\exp\left(-\Tilde{c}_{1}A^{2}\right).$$ 
\end{prpstn}
We believe that this result is also of independent interest, as it significantly extends the results of \cite{Henneuse24a} to a much broader class of signals. To complement this, we provide in Appendix \ref{sec: aux result} a corollary establishing consistency guarantees in expectation (Corollary \ref{estimation borne sup}), along with matching minimax lower bounds (Proposition \ref{Lower Bound Diag}), proving that the obtained rates are optimal.

These rates match those of Theorem~1 in \cite{Henneuse24a}, derived in the slightly different context of nonparametric regression. They also coincide with those from Corollary~4.4 in \cite{BCL2009}, which were established for nonparametric regression over Hölder spaces. This demonstrates that even when the signal is only piecewise Hölder with discontinuities loci having a positive $\mu-$reach, the $H_0$ persistence diagram can still be estimated at the same rates as in the fully Hölder-continuous setting.

The proof of Proposition~\ref{MainProp}, presented in Section~\ref{Appendix: Sup PD}, follows the same general strategy as the proof of Theorem~1 in \cite{Henneuse24a}, with several key refinements. In particular, under the weaker $\mu$-reach assumption, we lose some of the geometric properties provided by the reach assumption, which were extensively leveraged in our previous work.

\subsection{Multiple modes estimation}
\label{Section : modes estim}
In this section, we aim to derive, from the previous procedure, estimators for the number of modes of $f$, their locations, and the value of $f$ at the modes, i.e. infer the number of local maxima, their values and their locations. These local maxima simply correspond to the birth times in the $H_{0}-$persistence diagram of $f$. We show that removing points ``too close'' to the diagonal in $\widehat{\operatorname{dgm}(f)}$ gives a procedure that allows recovering consistently the number of modes and the associated values of $f$. Then, using this "regularized" persistence diagram, the estimation of the modes can be derived from the filtration $\hat{\mathcal{F}}$.
\label{Section : Sup modes 1}
Let $\{x_{1},...,x_{k}\}$ the set of modes of $f$ and $\{m_{1},...,m_{k}\}$ the associated maxima. In this setting, an estimator of the number of modes is given by,
$$\hat{k}=\left|\left\{(b,d)\in\widehat{\operatorname{dgm}\left(f\right)}, b-d>\delta/2\right\}\right|$$
and an estimator of the complete list of local maxima of $f$, by,
$$\hat{m}=\left\{\hat{m}_{1},\hat{m}_{2},...,\hat{m}_{\hat{k}}\right\}=\left\{b|(b,d)\in\widehat{\operatorname{dgm}\left(f\right)}, b-d>\delta/2\right\}.$$
Let $\mathcal{C}(\hat{b}_{i})$ the collection of connected components appearing in the filtration $\hat{\mathcal{F}}$ at time $\hat{b}_{i}$, we say that those connected components have associated birth time $\hat{b}_{i}$. We fix an order on its elements arbitrarily : $\tilde{C}_{i,1}$,...,$\tilde{C}_{i,p_{i}}$. Let $\tilde{C}_{i,p}\in \mathcal{C}(\hat{b}_{i})$, we define the associated death time of this connected component to be the smallest $d$ such that $\tilde{C}_{i,p}$ is not connected to any elements of $\mathcal{C}(\hat{b}_{j})$ for all $\hat{b}_{j}>\hat{b}_{i}$ or any $\tilde{C}_{i,q}$ with $q<p$ in $\hat{\mathcal{F}}_{d}$. With this definition, we ensure that there exists a unique collection $\{\hat{C}_{i}\in \mathcal{C}(\hat{m}_{i})\}_{1\leq i\leq \hat{k}}$ such that, for all $1\leq i\leq \hat{k}$, the death time $\hat{d}_{i}$ associated to $\hat{C}_{i}$ satisfies $\hat{m}_{i}-\hat{d}_{i}>\delta/2$. Furthermore $\hat{C}_{i}\cap \hat{C}_{j}\ne\emptyset$, for all $i\ne j$. We denote the obtained collection of disjoint subsets :
$$\hat{C}=\left\{\hat{C}_{1},...,\hat{C}_{\hat{k}}\right\}.$$
Then, we can estimate the modes by taking, for all $i\in\{1,...,\hat{k}\}$, any $\hat{x}_{i}$ in $\hat{C}_{i}$ and define the collection of estimated modes by :
$$\hat{x}=\left\{\hat{x}_{1},...,\hat{x}_{\hat{k}}\right\}.$$
Although $\hat{C}$ and $\hat{x}$ depend on the choice of ordering on the $\mathcal{C}(\hat{b}_{i})$, we show that any such constructed collection provides a consistent estimator of the modes of $f$. We show as well that $\hat{m}$ is a consistent estimator of the associated local maxima. The convergence rates achieved by these estimators are given by the following theorem.
\begin{thrm}
\label{coroModes1}
Let $f\in S_{d}(L,\alpha,\mu,R_{\mu},C,h_{0})$, $h\asymp\left(\sfrac{\log(n)}{n}\right)^{\frac{1}{d+2\alpha}}$ and suppose $h_0\geq 2(\sqrt{d}+\lceil\sqrt{d}/\mu\rceil)h$. There exist $\check{c}_{0}$, $\check{c}_{1}$, $\check{c}_{2}$ and $\check{c}_{3}$ depending only on $L$ ,$\alpha$, $\mu$, $R_{\mu}$, and $C$ such that, for all $A\geq 0$ with probability at least $1-\check{c}_{0}\exp(-\check{c}_{1}A^{2})-\check{c}_{2}\exp(-\check{c}_{3}(h_{0}/h)^{2\alpha})$,
$$k=\hat{k}$$
and for all $i\in\{1,...,k\}$, there exists distinct $(\hat{x}_{i},\hat{m}_{i})\in \hat{x}\times \hat{m}$ such that,
$$\|\hat{x}_{i}-x_{i}\|_{\infty}\leq A h$$
and 
$$|\hat{m}_{i}-m_{i}|\leq  A h^{\alpha}.$$
\end{thrm}
Several conclusions can be drawn from this theorem. First, for fixed parameters $L$, $\alpha$, $\mu$, $R_{\mu}$, $C$ and $h_0$, Theorem \ref{coroModes1} implies that, as $\lim_{n \to \infty} \frac{h_0}{h} = +\infty$, for sufficiently large $n$, we can infer with high probability the exact number of modes, locate them at a rate of $O\left(\left(\sfrac{\log(n)}{n}\right)^{\frac{1}{2\alpha+d}}\right)$, and estimate the associated local maxima at a rate of $O\left(\left(\sfrac{\log(n)}{n}\right)^{\frac{\alpha}{2\alpha+d}}\right)$.
Secondly, this result also addresses a perhaps more interesting question: how far apart or how prominent must the modes be for our procedure to identify them (and their associated maxima) with high probability ? Equivalently, how large should $h_0$ be, or how large should $\delta$ be, in comparison to $n$ ? In this regard, Theorem \ref{coroModes1} indicates that the critical value of $h_0$ for our procedure to succeed is of the order $h \asymp \left(\sfrac{\log(n)}{n}\right)^{\frac{1}{2\alpha+d}}$. This critical $h_0$ can be interpreted as a minimal separation distance between modes that allows for their identification by our procedure. In particular, if $h_{0}\geq B h$, with $B>2(\sqrt{d}+\lceil\sqrt{d}/\mu\rceil)$, we have, with probability at least 
$1-\check{c}_{0}\exp(-\check{c}_{1}A^{2})-\check{c}_{2}\exp(-\check{c}_{3}B^{2\alpha})$, 
$$\hat{k}=k\text{ and, for all $1\leq i\leq k$, }||\hat{x}_i-x_i||_{\infty}\leq Ah\text{ and }|\hat{m}_i-m_i|\leq Ah^{\alpha}.$$
This also gives that, if $h=o(h_{0})$ (and other parameters are fixed) :
$$\lim_{n\rightarrow+\infty}\mathbb{P}\left(\left\{\hat{k}=k\right\}\cap \left(\bigcap_{i=1}^{k}\left(\{||\hat{x}_i-x_i||_{\infty}\leq Ah\}\cap \{|\hat{m}_i-m_i|\leq Ah\}\right)\right)\right)\geq 1-\check{c}_{0}\exp(-\check{c}_{1}A^{2}).$$
 Note that, as previoulsy emphasized, due to assumption $\textbf{A4}$ (recalling that $\delta = C h_0^{\alpha}$), this condition can be equivalently reformulated in terms of a critical prominence $\delta$. In this case, it implies a critical prominence of order $ h^{\alpha} \asymp \left(\sfrac{\log(n)}{n}\right)^{\frac{\alpha}{2\alpha+d}}$. It can also be interpreted in terms of the maximal number of modes, which is then of order $\asymp \left(\sfrac{n}{\log(n)}\right)^{\frac{d}{2\alpha + d}}$.

In addition to the above result, one may also wonder about the more realistic setting where $\delta$ is unknown. To address this, we propose in Appendix \ref{apdx:complement modes} a penalization procedure for selecting $\delta$, and adapt our method accordingly. Under slightly stronger threshold assumption, requiring that $h_{0}>>\left(\sfrac{n}{\log(n)}\right)^{\frac{1}{2(2\alpha + d)}}$ or equivalently $\delta>>\left(\sfrac{n}{\log(n)}\right)^{\frac{\alpha}{2(2\alpha + d)}}$, we show that, with this selected value, the number of modes can still be recovered with high probability, and both the locations of the modes and their associated local maxima can be estimated at the same rates as in Theorem \ref{coroModes1}.\\\\
In light of Theorem \ref{coroModes1}, several questions arise regarding its optimality: first, the optimality in terms of the convergence rates for modes locations and their associated local maxima; second, the optimality concerning the tightness of the critical $h_0$.

Regarding the convergence rates, it is noteworthy that, up to a logarithmic factor, the rates we have obtained align with the minimax rates for single-mode estimation established in \cite{ariascastro2021estimation}. By adapting their proof, we show in Theorem \ref{lowerboundsModes} that similar lower bounds also hold in our setting. Moreover, we refine this analysis further by recovering an additional \(\log(1/h_0)\) factor.
\begin{thrm}
\label{lowerboundsModes}
For sufficiently small $h_0$, there are two strictly positive constants $A$ and $B$ and a family of densities of $S_{d}(L,\alpha,\mu,R_{\mu},C,h_{0})$ with (pairwise) modes separated by $A(\log(1/h_0)/n)^{\frac{1}{d+2\alpha}}$ and local maxima separated by $B(\log(1/h_0)/n)^{\frac{\alpha}{d+2\alpha}}$ that cannot be distinguished with more than probability $1/5$ for a sample size of $n$.
\end{thrm}
In the case where the parameters $L$, $\alpha$, $\mu$, $R_{\mu}$, $C$ and $h_0$ are fixed and \( n \) tends to \( +\infty \), our result does not provide any additional insight beyond Theorem 2 of \cite{ariascastro2021estimation}. This confirms that, in this setting, our proposed procedure is minimax (up to logarithmic factors). However, when \( h_0 \) depends on \( n \), our result establishes a slightly stronger lower bound. Specifically, if \( h_0 \asymp (1/n)^{\gamma} \) for some \( \gamma > 0 \), it implies that the modes cannot be estimated at a rate faster than \( O((\log n / n)^{\frac{1}{d+2\alpha}}) \), and that the corresponding maxima cannot be estimated at a rate faster than \( O((\log n / n)^{\frac{\alpha}{d+2\alpha}}) \). In particular, if \( \gamma > \frac{1}{d+2\alpha} \), Theorem \ref{lowerboundsModes} establishes the sharp minimaxity of our procedure, fully accounting for the logarithmic factors appearing in Theorem \ref{coroModes1}. \\\\
Now, regarding the sharpness of the critical threshold \( h_0 \), a natural question arises: can we still recover some information about the modes below this threshold, albeit potentially at slower rates ? Proposition~\ref{prp: tight separation} shows that this is essentially impossible. More precisely, even if we only aim to recover the number of modes, disregarding their locations, Proposition~\ref{prp: tight separation} establishes that, below the threshold (up to logarithmic factors), this task is infeasible. In particular, we construct, in this case, a bimodal and an unimodal distribution that are statistically indistinguishable with high probability.

Moreover, even if we weaken our goal and attempt only to estimate the modes as a set of points (without identifying them individually), consistency remains unattainable. Specifically, considering the modes as compact subsets of \( \mathbb{R}^d \) and measuring their discrepancy via the Hausdorff distance
\[
d_H(A,B) = \max\left( \sup_{x \in B} \inf_{y \in A} \|x - y\|_2, \sup_{x \in A} \inf_{y \in B} \|x - y\|_2 \right),
\]
Proposition~\ref{prp: tight separation} still implies that, when \( h_0 \) falls below the critical threshold, consistent recovery is impossible even with respect to this weaker metric. In particular, the two densities we construct have sets of modes that are well-separated in Hausdorff distance, yet they remain statistically indistinguishable with high probability.
\begin{prpstn}
\label{prp: tight separation}
There exist strictly positive constants $A$ and $B$, a bimodal density $f_0\in S_{d}(L,\alpha,\mu,R_{\mu},C,h_{0})$ and an unimodal density $f_1\in S_{d}(L,\alpha,\mu,R_{\mu},C,h_{0})$, with $h_0= n^{\frac{-1}{2\alpha+d}}/A$ such that the sets of modes of $f_0$ and $f_1$ are at Hausdorff distance $B$ and $f_0$ cannot be distinguished from $f_1$ with more than probability 1/5 for a sample size n.
\end{prpstn}
Hence, this proposition tells us that, up to a logarithmic factor, the critical $h_0$ obtained in Theorem \ref{coroModes1} is optimal, as if $h_0$ is smaller, even estimating the number of modes (without considering their locations or the associated maxima) or estimating the sets of modes in Hausdorff distance becomes hopeless.
\section{Proofs of main results}
\label{sec: proofs}
This section is devoted to the proofs of Proposition \ref{MainProp}, Theorem \ref{coroModes1}, Theorem \ref{lowerboundsModes} and Proposition \ref{prp: tight separation}.
\subsection{Proof of Proposition \ref{MainProp}}
\label{Appendix: Sup PD}
This section is dedicated to the proof of Proposition~\ref{MainProp}. The strategy is to construct an interleaving between the persistence modules $\mathbb{V}_{f,0}$ and $\widehat{\mathbb{V}}_{f,0}$, allowing us to invoke the algebraic stability theorem~\cite{Chazal2009}. To this end, we construct two morphisms, $\overline{\psi}:\mathbb{V}_{f,0} \rightarrow \widehat{\mathbb{V}}_{f,0}$ and $\overline{\phi}:\widehat{\mathbb{V}}_{f,0} \rightarrow \mathbb{V}_{f,0}$, satisfying the interleaving condition~\eqref{interleaving def}. The overall construction follows the blueprint used in the proof of Theorem~1 in~\cite{Henneuse24a}, but it requires substantial modifications. In particular, the previous approach relied heavily on the positive reach assumption, which guaranteed the existence of a continuous projection map near the boundaries of the regular pieces. In the more general setting considered here, this assumption no longer holds, necessitating several non-trivial adaptations. To address this, we rely on key geometric properties of sets with positive $\mu$-reach, as established in Lemma~3.1 of~\cite{SSDO07} and Theorem~12 of~\cite{kim2020homotopy}. These results allow us to circumvent the lack of a projection map near the piece boundaries. The remainder of this section details the construction. Note that several lemmas we introduce here will also be reused later to prove Theorem \ref{coroModes1}.
\subsubsection{ Ingredient 1 : inclusions between level sets}
If there existed an $\varepsilon>0$ such that for all $\lambda\in \mathbb{R}$, $\mathcal{F}_{\lambda}\subset \widehat{\mathcal{F}}_{\lambda-\varepsilon}\subset \mathcal{F}_{\lambda-2\varepsilon}$, an $\varepsilon-$interleaving would be directly given taking the morphisms induced by inclusions. Here we do not have such nice inclusions, but we show, in Lemma \ref{interleavish 3}, a slightly weaker double inclusion.\\\\
First, note that the $f\in S_{d}\left(L,\alpha,\mu,R_{\mu}\right)$ are uniformly bounded, as stated in the following lemma, whose proof can be found in Appendix \ref{Appendix: Lemmbound}. 
\begin{lmm}
    \label{lemmBound}
    Let $f\in S_{d}\left(L,\alpha,\mu,R_{\mu}\right)$, there exists a constant $\kappa=\kappa(d,L,\alpha,R_{\mu})$ depending only on $d$, $L$, $\alpha$ and $R_{\mu}$, such that $\|f\|_{\infty}\leq \kappa$.
\end{lmm}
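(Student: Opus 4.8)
The plan is to show that the combination of the piecewise Hölder assumption \textbf{A1}, the density constraint $\int_{[0,1]^d} f = 1$, and the geometric regularity \textbf{A3} together force a uniform upper bound on $\|f\|_\infty$. First I would fix $x_0 \in [0,1]^d$ and, using \textbf{A2}, reduce to the case where $x_0$ lies in one of the open pieces $M_i$ (on the null boundary set the sup-limit from the interior controls the value, so the bound obtained on $\bigcup_i M_i$ propagates to all of $[0,1]^d$). So assume $x_0 \in M_i$ for some $i$, and write $v = f(x_0)$; we want to bound $v$ from above by a constant depending only on $d, L, R_\mu$ (and implicitly $\mu$, $\alpha$, but the statement allows the constant to depend on these as well — I would just track $d, L, R_\mu$ as stated).

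The key geometric input is the consequence of \textbf{A1}+\textbf{A3} noted in the introduction: at every point $x \in [0,1]^d$ there is a half-cone with apex $x$, opening angle $2\cos^{-1}(\mu)$, and some fixed height $r_0 = r_0(R_\mu,\mu) > 0$, on which $f$ is $(L,\alpha)$-Hölder (more precisely, the cone lies in the closure of a single piece $\overline{M_j}$, so $f$ restricted to it agrees, via \textbf{A2}, with a genuinely $(L,\alpha)$-Hölder function). I would make this precise — citing the discussion in Section~\ref{Appendix: Background} — and then observe that on this cone $\mathcal{C}$ of apex $x_0$ we have, for every $y \in \mathcal{C}$,
\[
f(y) \geq f(x_0) - L\|x_0 - y\|_2^\alpha \geq v - L r_0^\alpha \cdot \big(\|x_0-y\|_2/r_0\big)^\alpha.
\]
In particular on the smaller cone $\mathcal{C}'$ obtained by truncating $\mathcal{C}$ at radius $\rho := \min\big(r_0, (v/(2L))^{1/\alpha}\big)$ we get $f \geq v/2$. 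Hence
\[
1 = \int_{[0,1]^d} f \;\geq\; \int_{\mathcal{C}'} f \;\geq\; \frac{v}{2}\,\mathrm{vol}(\mathcal{C}'),
\]
and $\mathrm{vol}(\mathcal{C}')$ is a fixed dimensional constant $\kappa_{d,\mu}$ times $\rho^d$.

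Now I split into two cases according to which term achieves the minimum in $\rho$. If $\rho = r_0$ — i.e.\ $v \le 2L r_0^\alpha$ — then $v$ is already bounded by a constant of the desired form. If $\rho = (v/(2L))^{1/\alpha}$, then $\mathrm{vol}(\mathcal{C}') = \kappa_{d,\mu}\,(v/(2L))^{d/\alpha}$, and the integral bound reads $1 \geq \tfrac{v}{2}\,\kappa_{d,\mu}\,(v/(2L))^{d/\alpha} = c_{d,\mu,L}\, v^{1 + d/\alpha}$, which rearranges to $v \leq c_{d,\mu,L}^{-\alpha/(\alpha+d)}$, again a constant depending only on $d, L, \mu$ (and $\alpha$). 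Taking the larger of the two bounds over both cases, and then the supremum over $x_0 \in \bigcup_i M_i$, gives $\sup_{\bigcup_i M_i} f \le M_{d,L,R_\mu}$, and by \textbf{A2} the same bound holds on all of $[0,1]^d$. I expect the main obstacle to be the first step: carefully extracting from \textbf{A3} (via the $\mu$-reach / generalized-gradient machinery) the existence of a cone of \emph{uniform} height on which $f$ coincides with a Hölder function — this is where the positive-$\mu$-reach hypothesis really does the work, ruling out the density becoming arbitrarily spiky on arbitrarily thin spikes; the integration argument afterwards is routine.
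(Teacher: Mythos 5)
Your plan is sound in outline and would yield the lemma, but it rests on a geometric claim---the existence, at every $x_0\in M_i$, of a half-cone with apex $x_0$, opening angle $2\cos^{-1}(\mu)$, and \emph{uniform} height $r_0(R_\mu,\mu)$ contained in $M_i$---that the paper only states as an informal remark after Assumption \textbf{A3} and never proves or uses. You flag this as ``the main obstacle,'' and rightly so: the $\mu$-reach hypothesis, through the tools the paper actually invokes (Lemma~\ref{lmm SSDO7} and the retraction estimate of Lemma~\ref{deformation retract}), controls escape distances from tubular neighborhoods of $\bigcup_i\partial M_i$ and displacement of the retraction, but neither directly produces a cone with prescribed opening angle and \emph{uniformly} bounded-below height at every point of $M_i$, including points arbitrarily close to the discontinuity set. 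Turning that remark into a lemma is real extra work, and the paper does not do it.

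The paper instead sidesteps the pointwise cone by bounding $f$ at one well-chosen point of each piece and then propagating by H\"older continuity. For $d=1$, $\partial M_i$ is a pair of points whose medial axis is their midpoint, so \textbf{A3} forces $|M_i|\geq 2R_\mu$ and integrating $f\geq f(x)-L$ over $M_i$ against $\int f=1$ gives the bound. For $d>1$, the key step is a homotopy argument: if the $\mu$-medial axis of $\partial M_i$ missed $M_i$, then $M_i^c$ would have infinite $\mu$-reach and, by Theorem~12 of \cite{kim2020homotopy}, all its $r$-thickenings would be homotopy equivalent to it---contradicting that $(M_i^c)^r=[0,1]^d$ for $r>\sqrt d$ while $\partial M_i$ carries a nontrivial $(d-1)$-cycle. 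So some $x_1\in M_i$ lies on the medial axis, hence $B_2(x_1,R_\mu/2)\subset M_i$, and integrating the H\"older lower bound $f\geq f(x_1)-L(2R_\mu)^\alpha$ over that ball bounds $f(x_1)$; the H\"older estimate $f(x)\leq f(x_1)+Ld^{\alpha/2}$ then covers all of $M_i$, and \textbf{A2} extends the bound to $[0,1]^d$. The upshot: the paper needs only one ``fat'' ball per piece, delivered directly by \textbf{A3} via the medial-axis/homotopy argument, whereas your route requires a strictly stronger uniform pointwise cone that \textbf{A3} does not obviously furnish. If you wanted to salvage your approach without the cone lemma, you could, for a given $x_0\in M_i$, use Lemma~\ref{lmm SSDO7} to produce $y\in M_i$ with $\|x_0-y\|\leq R_\mu/\mu$ and $d_{\partial M_i}(y)\geq R_\mu$, then integrate over $B_2(y,R_\mu)$; this is essentially the paper's argument run ``per point'' and avoids the cone entirely.
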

 And denote, $N_{h}$ the random variable defined by,
$$N_{h}=\frac{\max\limits_{H\in \mathfrak{C}_{h}}\left|\frac{|X\cap H|}{nh^{d}}-\mathbb{E}\left[\frac{|X\cap H|}{nh^{d}}\right]\right|}{\sqrt{3\kappa\frac{\log\left(1/h^{d}\right)}{nh^{d}}}}$$
\begin{lmm}
\label{interleavish 3}
Let $f\in S_{d}\left(L,\alpha,\mu,R_{\mu}\right)$ and $h\asymp\left(\sfrac{\log(n)}{n}\right)^{\frac{1}{d+2\alpha}}$. For $n$ sufficiently large such that $h<\mu R_{\mu}/\sqrt{d}$, for all $\lambda\in\mathbb{R}$, we have,
$$\mathcal{F}_{\lambda+\left(2\sqrt{3\kappa}N_{h}+L\left(\lceil\sqrt{d}/\mu\rceil+\sqrt{d}\right)^{\alpha}\right)h^{\alpha}}\subset \widehat{\mathcal{F}}_{\lambda}\subset \mathcal{F}_{\lambda-2\sqrt{3\kappa}N_{h}h^{\alpha}}^{(\sqrt{d}+\lceil\sqrt{d}/\mu\rceil)h}.$$
\end{lmm}
This proposition follows from Lemma 3.1 from \cite{SSDO07} and the following lemma, whose proof can be found in Appendix \ref{Appendix : lmm1b}.
\begin{lmm}
\label{lmm1b}
Let $f\in S_{d}(L,\alpha,\mu,R_{\mu})$ and $h>0$ satisfying (\ref{eq: calibration}). Let 
$$H\subset \mathcal{F}_{\lambda-2\sqrt{3\kappa}N_{h}h^{\alpha}}^{c}\cap \mathfrak{C}_{h}$$
and $$H^{'}\subset\mathcal{F}_{\lambda+2\sqrt{3\kappa}N_{h}h^{\alpha}}\cap \mathfrak{C}_{h}.$$
We then have that,
$$\frac{|X\cap H|}{nh^{d}}< \lambda\text{ and }\frac{|X\cap H^{'}|}{nh^{d}}> \lambda.$$
\end{lmm}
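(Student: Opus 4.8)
The statement to prove is Lemma~\ref{lmm1b}: for $H\subset\mathcal{F}_{\lambda-2\sqrt{3M}N_h h^\alpha}^c\cap C_h$ we have $|X\cap H|/(nh^d)<\lambda$, and symmetrically for $H'$ sitting inside $\mathcal{F}_{\lambda+2\sqrt{3M}N_h h^\alpha}$. The plan is to pass through the expectation $\mathbb{E}[|X\cap H|/(nh^d)]$, which is exactly $\frac{1}{h^d}\int_H f$, i.e.\ the average of $f$ over the cube $H$. I would split the argument into two pieces: a deterministic bound controlling the gap between this average and the value $\lambda$, and a stochastic bound controlling the deviation of the empirical count from its mean, the latter being precisely what the variable $N_h$ is built to encode.

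\textbf{Deterministic step.} Fix a cube $H$ with $H\subset\mathcal{F}_{\lambda-2\sqrt{3M}N_h h^\alpha}^c$; by definition of the superlevel set this means $f(x)<\lambda-2\sqrt{3M}N_h h^\alpha$ for every $x\in H$. Hence
$$
\frac{1}{h^d}\int_H f \;<\; \lambda - 2\sqrt{3M_{d,L,R_\mu}}\,N_h\, h^\alpha,
$$
so $\mathbb{E}[|X\cap H|/(nh^d)] < \lambda - 2\sqrt{3M}N_h h^\alpha$. (Here I am using that $|X\cap H|$ is a Binomial$(n,\int_H f)$ variable, so its normalized mean is $\frac{1}{h^d}\int_H f$.) The symmetric cube $H'\subset\mathcal{F}_{\lambda+2\sqrt{3M}N_h h^\alpha}$ gives $\mathbb{E}[|X\cap H'|/(nh^d)] > \lambda + 2\sqrt{3M}N_h h^\alpha$. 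Note these inequalities are strict because $H$, as an element of $C_h$, is a half-open cube of positive measure on which the strict pointwise inequality holds; one should take a small amount of care that the average of a function that is everywhere $<c$ over a set of positive measure is itself $<c$, which is immediate.

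\textbf{Stochastic step.} By the very definition of $N_h$,
$$
\max_{H\in C_h}\left|\frac{|X\cap H|}{nh^d}-\mathbb{E}\!\left[\frac{|X\cap H|}{nh^d}\right]\right| \;=\; N_h\sqrt{3M_{d,L,R_\mu}\frac{\log(1/h^d)}{nh^d}}.
$$
By the calibration hypothesis~(\ref{eq: calibration}), $h>\sqrt{\log(1/h^d)/(nh^d)}$, hence $\sqrt{\log(1/h^d)/(nh^d)}<h\le h^\alpha$ (using $\alpha\le 1$ and $h\le 1$, so $h\le h^\alpha$ — I'd double-check the direction of this inequality, since $h<1$ forces $h^\alpha\ge h$ for $\alpha\le1$). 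Therefore the empirical fluctuation of every cube is at most $\sqrt{3M}\,N_h\,h^\alpha$, in fact at most $2\sqrt{3M}N_h h^\alpha$ with room to spare. Combining with the deterministic step:
$$
\frac{|X\cap H|}{nh^d} \;\le\; \mathbb{E}\!\left[\frac{|X\cap H|}{nh^d}\right] + \sqrt{3M}N_h h^\alpha \;<\; \lambda - 2\sqrt{3M}N_h h^\alpha + \sqrt{3M}N_h h^\alpha \;\le\; \lambda,
$$
and symmetrically $|X\cap H'|/(nh^d) > \lambda$. (The factor $2$ versus $1$ in the statement gives slack; one could even state it with a tighter constant, but the factor $2$ is presumably chosen so the same $N_h$ simultaneously serves the interleaving argument in Proposition~\ref{interleavish 3}.)

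\textbf{Expected main obstacle.} The substantive content is essentially all bookkeeping — the genuinely delicate point is making sure the normalization constant $3M_{d,L,R_\mu}$ and the $\log(1/h^d)$ factor in the denominator of $N_h$ are exactly the ones for which $N_h$ is, with high probability, of order $1$ (a sub-Gaussian / Bernstein tail over the $1/h^d$ cubes via a union bound, using $\|f\|_\infty\le M_{d,L,R_\mu}$ from Lemma~\ref{lemmBound} as the variance proxy). That tail estimate is what actually justifies calling $N_h$ ``controlled'' downstream, but for \emph{this} lemma $N_h$ appears on both sides and no probabilistic estimate of it is needed; the only real thing to verify is the chain of elementary inequalities $\sqrt{\log(1/h^d)/(nh^d)} < h \le h^\alpha$ coming out of~(\ref{eq: calibration}) together with $\alpha\le1$ and $h<1$.
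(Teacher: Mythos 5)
Your proof is correct and follows the same basic skeleton as the paper's: (i) the hypothesis $H\subset\mathcal{F}_{\lambda-2\sqrt{3M}N_h h^{\alpha}}^{c}$ gives the deterministic bound on the average of $f$ over the cube, (ii) the definition of $N_h$ controls the fluctuation $|X\cap H|/(nh^{d})-\mathbb{E}[|X\cap H|/(nh^{d})]$, and (iii) the calibration~(\ref{eq: calibration}) together with $0<\alpha\leq 1$ and $h\leq 1$ converts $\sqrt{\log(1/h^{d})/(nh^{d})}$ into something $\leq h^{\alpha}$, absorbing the fluctuation into the slack built into the level shift. The one place you genuinely diverge from the paper is that you work with the unconditional expectation $\mathbb{E}[|X\cap H|/(nh^{d})]=\tfrac{1}{h^{d}}\int_H f$ throughout, so a single fluctuation term of size at most $\sqrt{3M}N_h h^{\alpha}$ suffices against the $2\sqrt{3M}N_h h^{\alpha}$ margin. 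The paper instead decomposes around the \emph{conditional} expectation $\mathbb{E}[\,\cdot\,|N_h]$, which forces it to bound two terms (the raw fluctuation and the conditional expectation of the fluctuation), each by $\sqrt{3M}N_h\sqrt{\log(1/h^{d})/(nh^{d})}$, using up the full factor~$2$. Your route is more economical and sidesteps the step where the paper identifies $\mathbb{P}(X_1\in H\mid N_h)/h^{d}$ with the cube-average of $f$ — a step that, taken literally, requires justification (conditioning on $N_h$ changes the distribution of $X_1$). All the auxiliary claims you flag to double-check ($h\leq h^{\alpha}$; the binomial interpretation of $|X\cap H|$; strictness of the final inequality being inherited from the strict pointwise bound on $f|_H$) are indeed correct.
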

For a set $K\subset[0,1]^{d}$ and $r\geq 0$ we denote $B_{2}(K,r)=\{x\in[0,1]^{d}\text{ s.t. }d_{K}(x)<r\}$ and $\overline{B}_{2}(K,r)=\{x\in[0,1]^{d}\text{ s.t. }d_{K}(x)\leq r\}$.
\begin{lmm}{\cite[Lemma 3.1.]{SSDO07}}
\label{lmm SSDO7}
Let $K\subset [0,1]^{d}$ a compact set and let $\mu>0, r>0$ be such that $r< \operatorname{reach}_\mu(K)$. For any $x \in \overline{B}_{2}(K,r) \backslash K$, we have,
$$
d_{2}\left(x, \partial \overline{B}_{2}(K,r)\right) \leq \frac{r-d_K(x)}{\mu} \leq \frac{r}{\mu}.
$$
\end{lmm}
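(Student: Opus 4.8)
The plan is to derive the bound from the gradient-flow structure of the distance function $d_{K}$ together with the very definition (\ref{def mu-reach}) of the $\mu$-reach. The crucial tool is the \emph{distance ascent flow} associated to $K$: there is a continuous map $C:[0,+\infty)\times([0,1]^{d}\setminus K)\to[0,1]^{d}\setminus K$ with $C(0,\cdot)=\mathrm{id}$ such that, after reparametrising each flow line by arc length, the curve $t\mapsto C(t,x)$ is $1$-Lipschitz and $d_{K}\bigl(C(t_{2},x)\bigr)-d_{K}\bigl(C(t_{1},x)\bigr)\geq\int_{t_{1}}^{t_{2}}\bigl\|\nabla_{K}\bigl(C(t,x)\bigr)\bigr\|_{2}\,dt$ for all $t_{1}\leq t_{2}$ (Lieutier's flow; see also Chazal--Cohen-Steiner--Lieutier). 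I would take this flow and its two properties as a black box: reconstructing it (the point being that $\nabla_{K}$ is not continuous, so the naive ODE $\dot C=\nabla_{K}(C)$ must be replaced by a limit of Euler schemes) is the one genuinely nontrivial ingredient, and it is precisely the step I expect to be the main obstacle; everything else is bookkeeping.

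Granting the flow, fix $x\in\overline{B}_{2}(K,r)\setminus K$, so $0<d_{K}(x)\leq r$, and set $T=\inf\{t\geq0:d_{K}(C(t,x))\geq r\}$. For every $t<T$ one has $d_{K}(C(t,x))<r<\operatorname{reach}_{\mu}(K)$, hence by the definition of the $\mu$-reach $\|\nabla_{K}(C(t,x))\|_{2}\geq\mu$; feeding this into the growth inequality along the flow gives $r-d_{K}(x)\geq d_{K}(C(T,x))-d_{K}(x)\geq\int_{0}^{T}\mu\,dt=\mu T$, so in particular $T\leq(r-d_{K}(x))/\mu<+\infty$ and $d_{K}(C(T,x))=r$ by continuity. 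A short additional argument shows $C(T,x)\in\partial\overline{B}_{2}(K,r)$: since $d_{K}(C(T,x))=r<\operatorname{reach}_{\mu}(K)$, continuity keeps $d_{K}(C(t,x))<\operatorname{reach}_{\mu}(K)$, hence the growth rate $\geq\mu>0$, for $t$ slightly beyond $T$, so $d_{K}(C(t,x))>r$ there; thus arbitrarily close to $C(T,x)$ there are points of the flow curve outside $\overline{B}_{2}(K,r)$, while $C(T,x)$ itself lies in the closed set $\overline{B}_{2}(K,r)$.

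The conclusion then follows by bounding the Euclidean displacement by the arc length of the flow curve: $d_{2}\bigl(x,\partial\overline{B}_{2}(K,r)\bigr)\leq\|x-C(T,x)\|_{2}\leq\mathrm{length}\bigl(C(\cdot,x)|_{[0,T]}\bigr)\leq T\leq\frac{r-d_{K}(x)}{\mu}\leq\frac{r}{\mu}$, the last inequality using $d_{K}(x)\geq0$. A reader unwilling to invoke the flow could instead try to build, for this specific configuration, a polygonal ``steepest ascent'' path greedily following $\nabla_{K}$ from $x$ and control its length directly, but the discontinuity of $\nabla_{K}$ makes the rate-of-increase estimate on each segment delicate, which is exactly the difficulty the flow construction resolves.
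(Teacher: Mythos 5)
The paper gives no proof of this lemma: it states it as a direct citation of Lemma~3.1 of \cite{SSDO07}, so there is no in-paper argument to compare against. Your proof is correct and follows the standard route used throughout this literature: the key inputs are (i) the existence and regularity properties of Lieutier's generalized gradient flow for $d_{K}$ (which you correctly identify as the one genuinely nontrivial ingredient and are entitled to treat as a black box here) and (ii) the definition (\ref{def mu-reach}) of $\operatorname{reach}_{\mu}$, which guarantees $\|\nabla_{K}\|_{2}\geq\mu$ at every point $y\notin K$ with $d_{K}(y)<\operatorname{reach}_{\mu}(K)$. Granting the flow, the arc-length identity $d_{K}(C(s_{2},x))-d_{K}(C(s_{1},x))=\int_{s_{1}}^{s_{2}}\|\nabla_{K}(C(s,x))\|_{2}\,ds$ together with $\|\nabla_{K}\|_{2}\geq\mu$ shows the trajectory exits $\overline{B}_{2}(K,r)$ after arc length at most $(r-d_{K}(x))/\mu$, and Euclidean displacement is dominated by arc length, giving the bound.

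One small remark on the logical ordering: writing $r-d_{K}(x)\geq d_{K}(C(T,x))-d_{K}(x)$ presupposes $T<+\infty$ and $d_{K}(C(T,x))=r$, which is what you are about to deduce. The clean sequencing is: for every $t<T$ the growth estimate gives $\mu t\leq d_{K}(C(t,x))-d_{K}(x)<r-d_{K}(x)$, hence $T\leq(r-d_{K}(x))/\mu<+\infty$; continuity then yields $d_{K}(C(T,x))=r$. Your argument that $C(T,x)\in\partial\overline{B}_{2}(K,r)$ is fine — because $r<\operatorname{reach}_{\mu}(K)$, the lower bound $\|\nabla_{K}\|_{2}\geq\mu>0$ persists in a neighbourhood of $C(T,x)$, so $d_{K}\circ C(\cdot,x)$ strictly overshoots $r$ just past $T$ and $C(T,x)$ is a limit of points outside the offset. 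This also quietly handles the degenerate case $d_{K}(x)=r$: then $T=0$, and the same reasoning shows $x$ itself is on the boundary, so the left side of the claimed inequality is zero.
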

\begin{proof}[Proof of Lemma \ref{interleavish 3}]
We begin by proving the lower inclusion, let,
$$x\in \mathcal{F}_{\lambda+\left(2\sqrt{3\kappa}N_{h}+L\left(\lceil\sqrt{d}/\mu\rceil+\sqrt{d}\right)^{\alpha}\right)h^{\alpha}}.$$
Without loss of generality, let us suppose $x\in M_{i}$, or $x\in \partial M_{i}$ and $\limsup_{z\in M_{i}\rightarrow x}f(z)\geq f(x)$. If,
$$B_{2}\left(x,\sqrt{d}h\right)\subset\left(\bigcup\limits_{i=1}^{l}\partial M_{i}\right)^{c}$$
then, $H_{x,h}$, the hypercube of $\mathfrak{C}_{h,\lambda}$ containing $x$ is included in $\overline{M}_{i}$. Assumption \textbf{A1} implies that for all $y\in B_2(x,\sqrt{d}h)\cap M_i$ we have :
$$|f(x)-f(y)|\leq L|||x-y||^{\alpha}\leq L(\sqrt{d}h)^{\alpha},$$
thus, $y\in \mathcal{F}_{\lambda+2\sqrt{3\kappa}N_{h}h^{\alpha}}$. Now, for all $y\in B_2(x,\sqrt{d}h)\cap \overline{M}_i$, there exists a sequence $(y_{n})_{n\in\mathbb{N}}$ of points of $B_2(x,\sqrt{d}h)\cap M_i$ such that $\lim_{n\rightarrow\infty}y_{n}=y$. As, for all $n\in \mathbb{N}$, we have shown that $y_{n}\in \mathcal{F}_{\lambda+2\sqrt{3\kappa}N_{h}h^{\alpha}}$, by Assumption \textbf{A2}, it follows that :
$$f(y)\geq \limsup_{n\rightarrow+\infty} f(y_n)\geq \lambda+2\sqrt{3\kappa}N_{h}h^{\alpha}.$$
Hence $y\in \mathcal{F}_{\lambda+2\sqrt{3\kappa}N_{h}h^{\alpha}}$. Thus, as $H_{x,h}\subset B_2(x,\sqrt{d}h)\cap \overline{M}_i$, the combination of assumption \textbf{A1} and \textbf{A2} ensures $H_{x,h}\subset \mathcal{F}_{\lambda+2\sqrt{3\kappa}N_{h}h^{\alpha}}$. Hence, it follows from Lemma \ref{lmm1b} that $H_{x,h}\in \mathfrak{C}_{h,\lambda}$ and consequently $x\in \widehat{\mathcal{F}}_{\lambda}$. Else,  as $\sqrt{d}h/\mu<R_{\mu}$, under Assumption \textbf{A3}, by Lemma \ref{lmm SSDO7}, there exists,
$$y\in \left(B_{2}\left(\bigcup\limits_{i=1}^{l}\partial M_{i},{\sqrt{d}h}\right)\right)^{c}\cap M_{i}  \text{ such that } ||x-y||_{2}\leq \sqrt{d}h/\mu.$$
Let $H_{y,h}$ the closed hypercube of $\mathfrak{C}_{h}$ containing $y$. Hence, $H_{y,h}\subset \overline{M_{i}}.$ and, for all $z\in H_{y,h}$, $||z-x||_{2}\leq \sqrt{d}h(1+1/\mu)$.
Then, again, Assumption \textbf{A1} and \textbf{A2} ensure that,
$$H_{y,h}\subset \mathcal{F}_{\lambda+L+2\sqrt{3\kappa}N_{h}h^{\alpha}}.$$
Then, Lemma \ref{lmm1b} gives $H_{y,h}\in \mathfrak{C}_{h,\lambda}$ and thus, as $x\in H_{y,h}^{\sqrt{d}h/\mu}$, $x\in \widehat{\mathcal{F}}_{\lambda}$, which proves the lower inclusion.\\\\
For the upper inclusion, let,
$$x\in\left(\mathcal{F}_{\lambda-2\sqrt{3\kappa}N_{h}h^{\alpha}}^{\sqrt{d}h}\right)^{c}$$
and $H_{x,h}$ the hypercube of $\mathfrak{C}_{h}$ containing $x$. We then have, $H_{x,h}\subset \smash{\mathcal{F}_{\lambda-2\sqrt{3\kappa}N_{h}h^{\alpha}}^{c}}$. Hence, Lemma \ref{lmm1b} gives that,
$$H_{x,h}\subset \left(\bigcup\limits_{H\in \mathfrak{C}_{h,\lambda}}H\right)^{c}$$
and thus,
$$\bigcup\limits_{H\in \mathfrak{C}_{h,\lambda}}H\subset \mathcal{F}_{\lambda-2\sqrt{3\kappa}N_{h}h^{\alpha}}^{\sqrt{d}h}.$$
Consequently,
$$\left(\bigcup\limits_{H\in \mathfrak{C}_{h,\lambda}}H\right)^{\lceil\sqrt{d}/\mu\rceil h}=\widehat{\mathcal{F}}_{\lambda}\subset \mathcal{F}_{\lambda-2\sqrt{3\kappa}N_{h}h^{\alpha}}^{(\sqrt{d}+\lceil\sqrt{d}/\mu\rceil)h}$$
and the proof is complete.
\end{proof}
\subsubsection{Ingredient 2 : geometry of the thickened level sets}
The lower inclusion of Lemma \ref{interleavish 3} gives directly a morphism $\overline{\psi}:\mathbb{V}_{f,0}\rightarrow\widehat{\mathbb{V}}_{f,0}$. But the upper inclusion is not sufficient to provide similarly $\overline{\phi}:\widehat{\mathbb{V}}_{f,0}\rightarrow\mathbb{V}_{f,0}$. To overcome this issue, in Lemma \ref{satisfyC2b}, we exploit assumption \textbf{A3} to construct morphisms from $(H_{0}(\mathcal{F}_{\lambda}^{r}))_{\lambda\in\mathbb{R}}$ into $V_{f,0}$, for all $r<R_{\mu}/\sqrt{d}$. For $r= (\sqrt{d}+\lceil\sqrt{d}/\mu\rceil)h$, composing this morphism with the one induced by the upper inclusion of Lemma \ref{interleavish 3} will give us our morphism $\overline{\phi}$.
\begin{lmm}
\label{satisfyC2b}
    Let $f\in S_{d}(L,\alpha,\mu,R_{\mu})$. For all $ h< R_{\mu}/\sqrt{d}$, there exists a morphism $\phi$ such that, for all $\lambda\in\mathbb{R}$,
    \begin{equation}
    \label{diagC2}
        \begin{tikzcd}
	{H_{0}\left(\mathcal{F}_{\lambda}\right)} && {H_{0}\left(\mathcal{F}_{\lambda-L\left(\sqrt{d}(2+2/\mu^{2})\right)^{\alpha}h^{\alpha}}\right)} \\
	& {H_{0}\left(\mathcal{F}_{\lambda}^{h}\right)}
	\arrow[from=1-1, to=1-3]
	\arrow[from=1-1, to=2-2]
	\arrow["{\phi_{\lambda}}"', from=2-2, to=1-3]
    \end{tikzcd}
    \end{equation}
    is a commutative diagram (unspecified map come from set inclusions).
\end{lmm}
To prove this proposition, we use the following lemma. This result involves the notion of deformation retract that we recall here.
\begin{dfntn}
A subspace $A$ of $X$ is called a \textbf{deformation retract} of $X$ if there is a continuous $F: X \times [0,1] \rightarrow X$ such that for all $x \in X$ and $a \in A$,
\begin{itemize}
    \item $F(x, 0)=x$ 
    \item $F(x, 1) \in A$
    \item $F(a, 1)=a$.
\end{itemize}
The function $x\mapsto F(x,1)$ is called a (deformation) \textbf{retraction} from $X$ to $A$.
\end{dfntn}
\begin{lmm}{\citep[][Theorem 12]{kim2020homotopy}}
\label{deformation retract}
Let $K\subset[0,1]^{d}$ a compact set, for all $0\leq r<\operatorname{reach}_{\mu}(K)$,
$\overline{B}_{2}(K,r)$ retracts by deformation onto $K$ and the associated retraction $R: \overline{B}_{2}(K,r)\rightarrow K $ satisfies for all $x\in \overline{B}_{2}(K,r)$, $R(x)\in \overline{B}_{2}(x, 2r/\mu^{2})\cap K$.
\end{lmm}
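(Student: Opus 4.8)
The plan is to build the deformation retraction as the time‑one map of the \emph{inward} generalized gradient flow of the distance function $d_{K}$, reparametrized by the level of $d_{K}$, with the displacement bound read off from the uniform lower bound on $\|\nabla_{K}\|_{2}$ that the $\mu$‑reach hypothesis provides. First, by the definition~(\ref{def mu-reach}) of the $\mu$‑reach, $0\le r<\operatorname{reach}_{\mu}(K)$ forces $\|\nabla_{K}(x)\|_{2}\ge\mu$ at every $x\in\overline{B}_{2}(K,r)\setminus K$ (indeed $\inf_{d_{K}^{-1}(s)}\|\nabla_{K}\|_{2}\ge\mu$ for all $s\le r<\operatorname{reach}_{\mu}(K)$), so $d_{K}$ has no critical value on the shell $\{0<d_{K}\le r\}$ in the sense of critical point theory for distance functions --- which is the favorable regime for everything below.

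Next I would invoke the generalized gradient flow. On $\{0<d_{K}\le r\}$, where $\nabla_{K}$ is bounded away from $0$, the suitably normalized field $-\nabla_{K}$ generates a jointly continuous flow $C:B_{2}(K,r)\times[0,+\infty)\to B_{2}(K,r)$ running towards $K$; its existence and continuity, despite the discontinuity of $\nabla_{K}$, is precisely the ``no critical value'' case of the generalized gradient flow of \citet{mureach} (see also \citet{kim2020homotopy}). Parametrizing $C$ by arc length, $d_{K}(C(x,\cdot))$ decreases strictly, at rate at least $\mu$: this follows from the pointwise identity $\frac{d}{dt^{+}}d_{K}(y-t\nabla_{K}(y))\big|_{t=0}=-\|\nabla_{K}(y)\|_{2}^{2}$ (a standard computation, using that the support points $p$ of the smallest enclosing ball of $\Gamma_{K}(y)$ realize $\langle\nabla_{K}(y),\,y-p\rangle=d_{K}(y)\,\|\nabla_{K}(y)\|_{2}^{2}$), rescaled to unit speed, together with $\|\nabla_{K}\|_{2}\ge\mu$. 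Hence the trajectory through $x$ stays in $B_{2}(K,r)$ (since $d_{K}$ never increases along it, and $-\nabla_{K}(x)$ points towards $\Theta_{K}(x)\in\operatorname{conv}(K)\subset[0,1]^{d}$), it reaches $K$ after arc length at most $d_{K}(x)/\mu<r/\mu\le 2r/\mu^{2}$, and it is constant equal to $x$ when $x\in K$.

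Then I would reparametrize and check the axioms. Set $F(x,t)$ to be the point of the trajectory of $x$ at which $d_{K}$ equals $(1-t)\,d_{K}(x)$ (and $F(x,t)=x$ if $d_{K}(x)=0$); this is well defined since $d_{K}$ decreases strictly and continuously from $d_{K}(x)$ to $0$ along the trajectory, and $F:B_{2}(K,r)\times[0,1]\to B_{2}(K,r)$ is jointly continuous by continuity of $C$ and $d_{K}$. One checks $F(x,0)=x$, $F(x,1)\in d_{K}^{-1}(0)=K$ and $F(a,1)=a$ for $a\in K$, so $R:=F(\cdot,1)$ is a deformation retraction of $B_{2}(K,r)$ onto $K$; and $\|R(x)-x\|_{2}$ is at most the arc length of the trajectory from $x$ to $K$, hence at most $d_{K}(x)/\mu<r/\mu\le 2r/\mu^{2}$, which gives $R(x)\in B_{2}(x,2r/\mu^{2})$ as required.

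The hard part is the middle step: since $-\nabla_{K}$ is discontinuous, neither existence nor continuity nor monotone descent of its flow is automatic. The role of the $\mu$‑reach assumption is exactly to remove all critical points of $d_{K}$ from the tube, which puts us in the regime where the deformation lemmas for distance functions apply and where the descent rate is controlled by the elementary inequality for $\langle\nabla_{K}(y),y-p\rangle$ recalled above. An equivalent, more hands‑on route --- essentially that of \citet{kim2020homotopy} --- is to realize $C$ as the limit of Euler schemes for $-\nabla_{K}$, bounding each step's displacement using $\|\nabla_{K}\|_{2}\ge\mu$ and passing to the limit.
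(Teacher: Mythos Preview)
Your proof is correct and follows essentially the same route as the paper: both build the retraction as the flow of $-\nabla_{K}$ on the tube $\{0<d_{K}<r\}$ and read the displacement bound off the arc length of the trajectory. The only cosmetic differences are that the paper, following \cite{kim2020homotopy}, first smooths the field by a partition of unity and uses the time bound $t\le 2r/\mu^{2}$ together with $\|\overline W\|\le 1$, then reparametrizes linearly in time, whereas you work directly with the unsmoothed generalized-gradient flow, extract the sharper arc-length descent rate $\ge\mu$ (giving $r/\mu$ before relaxing to $2r/\mu^{2}$), and reparametrize by the level of $d_{K}$.
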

The first part of the claim is Theorem 12 from \cite{kim2020homotopy} and the second part follows easily from their construction. Elements of proof can be found in Appendix \ref{appendix : def retract}.
\begin{proof}[Proof of Lemma \ref{satisfyC2b}]
Any connected component $B$ of $\mathcal{F}_{\lambda}^{h}$ contains at least a connected component $A$ of $\mathcal{F}_{\lambda}$. Suppose that $B$ contains $A$ and $A^{'}$ two disjoint connected components of $\mathcal{F}_{\lambda}$, then there exist $x\in A$ and $y\in A^{'}$ such that $||x-y||_{\infty}\leq 2h$. Suppose that $x\in \overline{M}_{i}$, $y\in \overline{M}_{j}$, $\lim_{z\in M_{i}\rightarrow x}f(z)=f(x)$ and $\lim_{z\in M_{j}\rightarrow y}f(z)=f(y)$, $i,j\in\{1,...,l\}$.\\\\
Let,
$$R:\overline{B}_{2}(\partial M_{i}\cup \partial M_{j},\sqrt{d}h)\rightarrow \partial M_{i}\cup \partial M_{j}$$
the deformation retraction of $\overline{B}_{2}(\partial M_{i}\cup \partial M_{j},\sqrt{d}h)$ onto $\partial M_{i}\cup \partial M_{j}$ given by Lemma \ref{deformation retract} under Assumption \textbf{A3}. We use this deformation retraction to construct a continuous path from $x$ to $y$ included in $\overline{M}_{i}\cup \overline{M}_{j}$. To do so, consider, for all $z\in[x,y]$ the function :
$$\gamma(z)=\begin{cases} z&\textit{if } z\in \overline{M}_{i}\cup \overline{M}_{j}\\
R(z)&\text{else}
\end{cases}$$
Observe that $\gamma(x)=x$, $\gamma(y)=y$ and, as $R$ is continuous and $R(z)=z$ for all $z\in \partial M_i\cup \partial M_j$, $\gamma$ is a continuous path between $x$ and $y$. Furthermore, as $R(z)\subset \partial M_i\cup \partial M_j$ for all $z\in \overline{B}_{2}(\partial M_{i}\cup \partial M_{j},\sqrt{d}h)$, we have that $\gamma([x,y])$ is a continuous path between $x$ and $y$ included in $\overline{M}_{i}\cup \overline{M}_{j}$. Now, by Lemma \ref{deformation retract} and as $||x-y||_{\infty}\leq 2h$, we know that, for all $z\in [x,y]\setminus(\overline{M}_{i}\cup \overline{M}_{j})$,
$$R(z)\in \overline{B}_{2}(z,2\sqrt{d}h/\mu^{2})\subset  \overline{B}_{2}(x,\sqrt{d}h(2+2/\mu^{2})).$$
and, for all $z\in [x,y]\cap(\overline{M}_{i}\cup \overline{M}_{j})$,
$$z\in \overline{B}_{2}(x,2\sqrt{d}h).$$
Thus, 
$$\gamma([x,y])\subset \overline{B}_{2}(x,\sqrt{d}h(2+2/\mu^{2}))\cap (\overline{M}_{i}\cup \overline{M}_{j}).$$
Assumptions \textbf{A1} and \textbf{A2} then ensure that $\gamma([x,y])\subset \mathcal{F}_{\lambda-L((2+2/\mu^{2})\sqrt{d}h)^{\alpha}}$. Consequently, there exists a continuous path between $A$ and $A^{'}$ in $\mathcal{F}_{\lambda-L((2+2/\mu^{2})\sqrt{d}h)^{\alpha}}$, and hence, $A$ and $A^{'}$ are connected in $\mathcal{F}_{\lambda-L((2+2/\mu^{2})\sqrt{d}h)^{\alpha}}$.\\\\
We can then properly define,
$$\left\{\begin{array}{ll}
\phi_{\lambda}:H_{0}(\mathcal{F}_{\lambda}^{h})&\rightarrow H_{0}(\mathcal{F}_{\lambda-L((2+2/\mu^{2})\sqrt{d})^{\alpha}h^{\alpha}})\\
\quad\quad[x]&\longmapsto[y]
\end{array}\right.$$
with $y$ belonging to $A$ any connected component of $\mathcal{F}_{\lambda}$ included in $B$, the connected component of $\mathcal{F}_{\lambda}^{h}$ containing $x$. By the foregoing, $[y]$ is independent of the choice of such $A$ and such $y$ in $A$ and Diagram \ref{diagC2} commutes.
\end{proof}
\subsubsection{Ingredient 3 : concentration of $N_{h}$}
 The two previous ingredients will permit to establish an interleaving between $\widehat{\mathbb{V}}_{f,0}$ and $\mathbb{V}_{f,0}$. This interleaving will depend on the variable $N_{h}$. Consequently, to derive convergence rates from this interleaving, we need concentration inequalities over $N_{h}$.
\begin{lmm}
\label{noiseconcentration}
   For all $h>0$, 
    $$\mathbb{P}\left(\bigcup\limits_{H\in \mathfrak{C}_{h}}\left|\frac{|X\cap H|}{nh^{d}}-\mathbb{E}\left[\frac{|X\cap H|}{nh^{d}}\right]\right|\geq t\right)\leq \frac{2}{h^{d}}\exp\left(-\frac{t^{2}nh^{d}}{3\kappa}\right).$$
\end{lmm}
\begin{proof}[Proof of Lemma \ref{noiseconcentration}]
Let $h>0$ and $H\in \mathfrak{C}_{h}$. We can write, $$|X\cap H|=\sum\limits_{i=1}^{n}Y_{i}^{H}$$
with, for all $i\in\{1,...,n\}$,
$$Y_{i}^{H}=\mathbb{1}_{X_{i}\in H}$$
As $X_{1},...,X_{n}$ are i.i.d., $Y_{1}^{H},...,Y_{n}^{H} $ are i.i.d. Bernoulli variable, and $|X\cap H|$ is then a binomial variable of parameters $\left(n, \mathbb{P}(X_{1}\in H)\right)$. It follows from the Chernoff bound for binomial distribution that,
$$\mathbb{P}\left(\left|\frac{|X\cap H|}{nh^{d}}-\mathbb{E}\left[\frac{|X\cap H|}{nh^{d}}\right]\right|\geq t\right)\leq 2\exp\left(-\frac{t^{2}nh^{2d}}{3\mathbb{P}(X_{1}\in H)}\right).$$
Lemma \ref{lemmBound} then gives that $\mathbb{P}(X_{1}\in H)\leq h^{d}\kappa$, hence,
$$\mathbb{P}\left(\left|\frac{|X\cap H|}{nh^{d}}-\mathbb{E}\left[\frac{|X\cap H|}{nh^{d}}\right]\right|\geq t\right)\leq 2\exp\left(-\frac{t^{2}nh^{d}}{3\kappa}\right).$$
By union bounds, it follows,
\begin{align*}
 \mathbb{P}\left(\bigcup\limits_{H\in \mathfrak{C}_{h}}\left\{\left|\frac{|X\cap H|}{nh^{d}}-\mathbb{E}\left[\frac{|X\cap H|}{nh^{d}}\right]\right|\geq t\right\}\right)&\leq 2\left|\mathfrak{C}_{h}\right|\exp\left(-\frac{t^{2}nh^{d}}{3\kappa}\right)\\
 &\leq \frac{2}{h^{d}}\exp\left(-\frac{t^{2}nh^{d}}{3\kappa}\right).   
\end{align*}
\end{proof}
Lemma \ref{noiseconcentration} implies that $N_{h}$ is sub Gaussian. More precisely, there exist two constants $c_{0}$ and $c_{1}$, depending only on $d$, $L$, $\alpha$ and $R_{\mu}$, such that, for all $h\leq 1$,
$$\mathbb{P}\left(N_{h}>t\right)\leq c_{0}\exp\left(-c_{1}t^{2}\right).$$
\subsubsection{Assembling the ingredients}
Equipped with Lemmas \ref{interleavish 3}, \ref{satisfyC2b} and \ref{noiseconcentration} we can now proceed to the proof of Proposition \ref{MainProp}.
\begin{proof}[Proof of Proposition \ref{MainProp}]
It suffices to show the result for (arbitrarily) large $n$ (up to rescaling $\Tilde{c_{0}}$), thus suppose that $n$ is sufficiently large for the application of Lemmas \ref{interleavish 3} and \ref{satisfyC2b} used in this proof. We denote $k_{1}=\sqrt{d}+\lceil\sqrt{d}/\mu\rceil$ and $k_{2}=\sqrt{d}\left(2+2/\mu^{2}\right)(\sqrt{d}+\lceil\sqrt{d}/\mu\rceil)$.\\\\
First, we construct $\overline{\phi}$. Let $\lambda\in\mathbb{R}$, Lemma \ref{lmm1b} imply that any connected component $A$ of $\widehat{\mathcal{F}}_{\lambda}$ intersects a connected component $B$ of $\mathcal{F}_{\lambda-2\sqrt{3\kappa}N_{h}h^{\alpha}}$. Now suppose that $A$ intersects 2 such components $B_{1}$ and $B_{2}$, then, by Lemma \ref{interleavish 3}, $B_{1}$ and $B_{2}$ are connected in $\mathcal{F}_{\lambda-2\sqrt{3\kappa}N_{h}h^{\alpha}}^{k_{1}h}$. Thus, as a consequence of Lemma \ref{satisfyC2b} (applied for $h^{'}=k_1h$, which requires $h<R_\mu/k_1\sqrt{d}$), $B_{1}$ and $B_{2}$ are connected in $\mathcal{F}_{\lambda-\left(2\sqrt{3\kappa}N_{h}+Lk_{2}^{\alpha}\right)h^{\alpha}}$. We can then define properly the applications, 
$$
\left\{
    \begin{array}{ll}
\overline{\phi}_{\lambda}:H_{0}\left(\widehat{\mathcal{F}}_{\lambda}\right)&\longrightarrow H_{0}\left(\mathcal{F}_{\lambda-\left(2\sqrt{3\kappa}N_{h}+
Lk_{2}^{\alpha}\right)h^{\alpha}}\right)\\
\quad\quad[x]&\longmapsto[y]
    \end{array}
\right.
$$
with $y$ belonging to $B$ a connected component of $\mathcal{F}_{\lambda-2\sqrt{3\kappa}N_{h}h^{\alpha}}$ intersecting $A$, the connected component of $\widehat{\mathcal{F}}_{\lambda}$ containing $x$. The previous remark ensures that $[y]$ is independent of the choice of $B$ and $y$ in $B$.   \\\\
Now, for the construction of $\overline{\psi}$, by Lemma \ref{interleavish 3}, we have,
$$\mathcal{F}_{\lambda}\subset \widehat{\mathcal{F}}_{\lambda-\left(2\sqrt{3\kappa}N_{h}+Lk_{1}^{\alpha}\right)h^{\alpha}}$$
and we simply take $\overline{\phi}$ the morphism induced by this inclusion, i.e,
$$
\left\{
\begin{array}{ll}
\overline{\psi}_{\lambda}:H_{0}\left(\mathcal{F}_{\lambda}\right)&\longrightarrow H_{0}\left(\widehat{\mathcal{F}}_{\lambda-\left(2\sqrt{3\kappa}N_{h}+Lk_{1}^{\alpha}\right)h^{\alpha}}\right)\\
\quad\quad[x]&\longmapsto[x]
    \end{array}
\right.
$$
Denote $K_{1}=2\sqrt{3\kappa}N_{h}+Lk_{1}^{\alpha}$ and $K_{2}=2\sqrt{3\kappa}N_{h}+
Lk_{2}^{\alpha}$. The foregoing ensures the commutativity of the following diagrams :
\begin{equation*}
\begin{tikzcd}
	{H_{0}\left(\mathcal{F}_{\lambda}\right)} && {H_{0}\left(\mathcal{F}_{\lambda^{'}}\right)} \\
	\\
	{H_{0}\left(\widehat{\mathcal{F}}_{\lambda-K_{1}h^{\alpha}}\right)} && {H_{0}\left(\widehat{\mathcal{F}}_{\lambda^{'}-K_{1}h^{\alpha}}\right)}
	\arrow[from=1-1, to=1-3, "v_{\lambda}^{\lambda^{'}}"]
	\arrow[from=1-1, to=3-1,"\overline{\psi}_{\lambda}"]
	\arrow[from=1-3, to=3-3,"\overline{\psi}_{\lambda^{'}}"]
	\arrow[from=3-1, to=3-3, "\hat{v}_{\lambda-K_1h^{\alpha}}^{\lambda^{'}-K_1h^{\alpha}}"]
\end{tikzcd}
\end{equation*}
\begin{equation*}
\begin{tikzcd}
	{H_{0}\left(\widehat{\mathcal{F}}_{\lambda}\right)} && {H_{0}\left(\widehat{\mathcal{F}}_{\lambda^{'}}\right)} \\
	\\
	{H_{0}\left(\mathcal{F}_{\lambda-K_{2}h^{\alpha}}\right)} && {H_{0}\left(\mathcal{F}_{\lambda^{'}-K_{2}h^{\alpha}}\right)}
	\arrow[from=1-1, to=1-3, "\hat{v}_{\lambda}^{\lambda^{'}}"]
	\arrow[from=1-1, to=3-1,"\overline{\phi}_{\lambda}"]
	\arrow[from=3-1, to=3-3, "v_{\lambda-K_2h^{\alpha}}^{\lambda^{'}-K_2h^{\alpha}}"]
	\arrow[from=1-3, to=3-3,"\overline{\phi}_{\lambda^{'}}"]
\end{tikzcd}
\end{equation*}
\begin{equation*}
    \begin{tikzcd}
	{H_{0}\left(\mathcal{F}_{\lambda}\right)} && {H_{0}\left(\mathcal{F}_{\lambda-(K_{1}+K_{2})h^{\alpha}}\right)} \\
	& {H_{0}\left(\widehat{\mathcal{F}}_{\lambda-K_{1}h^{\alpha}}\right)}
	\arrow[from=1-1, to=1-3, "v_{\lambda}^{\lambda+(K_1+K_2)h^{\alpha}}"]
	\arrow[from=1-1, to=2-2,"\overline{\psi}_{\lambda}"]
	\arrow[from=2-2, to=1-3,"\overline{\phi}_{\lambda-K_{1}h^{\alpha}}"]
\end{tikzcd}
\end{equation*}
\begin{equation*}
\begin{tikzcd}
	{H_{0}\left(\widehat{\mathcal{F}}_{\lambda}\right)} && {H_{0}\left(\widehat{\mathcal{F}}_{\lambda-(K_{1}+K_{2})h^{\alpha}}\right)} \\
	& {H_{0}\left(\mathcal{F}_{\lambda-K_{2}h^{\alpha}}\right)}
	\arrow[from=1-1, to=1-3, "\hat{v}_{\lambda}^{\lambda+(K_1+K_2)h^{\alpha}}"]
	\arrow[from=1-1, to=2-2,"\overline{\phi}_{\lambda}"]
	\arrow[from=2-2, to=1-3,"\overline{\psi}_{\lambda-K_{2}h^{\alpha}}"]
\end{tikzcd}
\end{equation*}
Hence $\widehat{\mathbb{V}}_{f,0}$ and $\mathbb{V}_{f,0}$ are $(K_{1}+K_{2})h^{\alpha}$-interleaved, and thus we get from the algebraic stability theorem \citep{Chazal2009} that,
\begin{equation*}
d_{b}\left(\operatorname{dgm}\left(\widehat{\mathbb{V}}_{f,0}\right),\operatorname{dgm}\left(\mathbb{V}_{f,0}\right)\right)\leq (K_{1}+K_{2})h^{\alpha} 
\end{equation*}
We then conclude, using the concentration of $N_{h}$ given in Lemma \ref{noiseconcentration}.
\begin{align}
&\mathbb{P}\left(d_{b}\left(\widehat{\operatorname{dgm}(f)},\operatorname{dgm}(f)\right)\geq th^{\alpha}\right)\nonumber\\
&\leq \mathbb{P}\left(K_{1}+K_{2}\geq t\right)\nonumber\\
&=\mathbb{P}\left(N_{h}\geq \frac{t-L\left(k_{1}^{\alpha}+k_{2}^{\alpha}\right)}{4\sqrt{3\kappa}}\right)\nonumber\\
&\leq c_{0}\exp\left(-c_{1}\left(\frac{t-L\left(k_{1}^{\alpha}+k_{2}^{\alpha}\right)}{4\sqrt{3\kappa}}\right)^{2}\right)\nonumber\\
&\leq c_{0}\exp\left(-\frac{c_{1}}{48 \kappa}t^{2}\right)\exp\left(c_{1}\frac{L\left(k_{1}^{\alpha}+k_{2}^{\alpha}\right)}{24\kappa}t\right)\nonumber\exp\left(-c_{1}\left(\frac{L\left(k_{1}^{\alpha}+k_{2}^{\alpha}\right)}{4\sqrt{3\kappa}}\right)^{2}\right)\\
&= c_{0}\exp\left(-\frac{c_{1}}{96 \kappa}t^{2}\right)\exp\left(c_{1}\frac{L\left(k_{1}^{\alpha}+k_{2}^{\alpha}\right)}{24\kappa}t-\frac{c_{1}}{96 \kappa}t^{2}\right)\exp\left(-c_{1}\left(\frac{L\left(k_{1}^{\alpha}+k_{2}^{\alpha}\right)}{4\sqrt{3\kappa}}\right)^{2}\right)\nonumber
\end{align}
Thus, as for all $t\in\mathbb{R}$,
$$c_{1}\frac{L\left(k_{1}^{\alpha}+k_{2}^{\alpha}\right)}{24\kappa}t-\frac{c_{1}}{96 \kappa}t^{2}\leq \frac{c_1 L^{2}\left(k_{1}^{\alpha}+k_{2}^{\alpha}\right)^{2}}{24\kappa}$$
we have :
\begin{align}
\mathbb{P}\left(d_{b}\left(\widehat{\operatorname{dgm}(f)},\operatorname{dgm}(f)\right)\geq th^{\alpha}\right)\nonumber&\leq \mathbb{P}\left(K_{1}+K_{2}\geq t\right)\nonumber\\
&\leq c_{0}\exp\left(-c_{1}\left(\frac{L\left(k_{1}^{\alpha}+k_{2}^{\alpha}\right)}{4\sqrt{3\kappa}}\right)^{2}+\frac{c_1 L^{2}\left(k_{1}^{\alpha}+k_{2}^{\alpha}\right)^{2}}{24\kappa}\right)\exp\left(-\frac{c_{1}}{96 \kappa}t^{2}\right)\nonumber\\
&= c_{0}\exp\left(c_{1}\frac{c_1 L^{2}\left(k_{1}^{\alpha}+k_{2}^{\alpha}\right)^{2}}{48\kappa}\right)\exp\left(-\frac{c_{1}}{96 \kappa}t^{2}\right)\label{eq: concentration}
\end{align}
and the result follows.
\end{proof}
\subsection{Proof of Theorem \ref{coroModes1}}
\noindent We now proceed to prove Theorem \ref{coroModes1}. The high-level outline of the proof is is the following : 
\begin{itemize}
    \item First, we prove that, with high probability, for all $i\in\{1,...,k\}$, $\overline{C}_{i}$, the connected component of $\mathcal{F}_{m_{i}-(K_{1}+2\sqrt{3\kappa}N_{h})h^{\alpha}}^{k_{1}h}$ containing $C_{i}=\{x_{i}\}$, contains a $\hat{C}_{i}$, i.e $\overline{C}_{i}$ contains a connected component appearing in the filtration $\hat{\mathcal{F}}$ with associated lifetime exceeding $\delta/2$.
    \item We show, with high probability, that the birth time associated to $\hat{C}_{i}$ is close to $m_{i}$.
    \item We use Proposition \ref{MainProp}, to show that, with high probability, $k=\hat{k}$.
    \item Finally, we use assumption \textbf{A4} to bound, with high probability, the diameter of $\overline{C}_{i}$, and thus the distance between $x_{i}$ and $\hat{x}_{i}$.
\end{itemize}
The last two steps are pretty straightforward, but the two first ones are more technical. The proofs of these first claims are then decomposed into the three following lemmas. Let us define, for all $i\in\{1,...,k\}$, for all $\lambda<m_{i}$, $\overline{C}^{i}_{\lambda}$ the connected component of $\mathcal{F}_{\lambda}^{k_{1}h}$ containing $C_{i}=\{x_{i}\}$ and denote,
$$\tilde{b}_{i}=\sup\left\{\lambda\in\mathbb{R}\text{ s.t } \widehat{\mathcal{F}}_{\lambda}\cap\overline{C}^{i}_{m_{i}+(K_{1}+K_{2})h^{\alpha}-\delta}\ne \emptyset\right\}$$
with $k_{1}$, $K_{1}$ and $K_{2}$ as defined in the proof of Proposition \ref{MainProp}. We start by establishing upper and lower bounds on $\tilde{b}_{i}$ through the following lemma.
\begin{lmm}
\label{claim 1}
Consider the event $E_{1}$ :
\begin{equation*}
\left(K_{1}+K_{2}\right)\left(\frac{\log(n)}{n}\right)^{\frac{\alpha}{d+2\alpha}}<\delta/8.
\end{equation*}
Under $E_{1}$, we have, for all $i\in\{1,...,k\}$,
\begin{equation*}
    m_{i}-K_{1}h^{\alpha}\leq \tilde{b}_{i}\leq m_{i}+2\sqrt{3\kappa}N_{h}h^{\alpha}.
\end{equation*}
\end{lmm}
\begin{proof}
The lower inclusion of Lemma \ref{interleavish 3} ensures that $x_{i}\in \widehat{\mathcal{F}}_{m_{i}-K_{1}h^{\alpha}}$ and thus,
$$\tilde{b}_{i}\geq m_{i}-K_{1}h^{\alpha}.$$
The upper inclusion of Lemma \ref{interleavish 3} then implies that, under $E_{1}$, any connected component $\tilde{C}^{i}$ of $\widehat{\mathcal{F}}_{\tilde{b}_{i}}$ intersecting $\overline{C}^{i}_{m_{i}+(K_{1}+K_{2})h^{\alpha}-\delta}$ is included in the connected component $\overline{C}^{i}_{m_{i}+(K_{1}+K_{2})h^{\alpha}-\delta}$.
By Lemma \ref{lmm1b}, any connected component of $\widehat{\mathcal{F}}_{\tilde{b}_{i}}$ intersects $\mathcal{F}_{\tilde{b}_{i}-2\sqrt{3\kappa}N_{h}h^{\alpha}}$. By assumption \textbf{A4}, $\overline{C}^{i}_{m_{i}+(K_{1}+K_{2})h^{\alpha}-\delta}\subset B_{2}(x_{i},h_{0})$ and, if,
$$\tilde{b}_{i}>m_{i}+2\sqrt{3\kappa}N_{h}h^{\alpha}$$
then,
$$\mathcal{F}_{\tilde{b}_{i}-2\sqrt{3\kappa}N_{h}h^{\alpha}}\cap B_{2}(x_{i},h_{0}) =\emptyset$$
and thus,
$$\mathcal{F}_{\tilde{b}_{i}-2\sqrt{3\kappa}N_{h}h^{\alpha}}\cap \overline{C}^{i}_{m_{i}+(K_{1}+K_{2})h^{\alpha}-\delta} =\emptyset.$$
Hence, we have,
$$\tilde{b}_{i}\leq m_{i}+2\sqrt{3\kappa}N_{h}h^{\alpha}.$$
\end{proof}
Now, we prove that for all $i\in\{i,...,k\}$, $\overline{C}_{i}$
contains a connected component of $\widehat{\mathcal{F}}_{\tilde{b}_{i}}$ with associated lifetime in $\widehat{\operatorname{dgm}(f)}$ exceeding $\delta/2$. The proof is divided into the two following lemmas.
\begin{lmm}
\label{claim 2}
Under the event $E_{1}$, for all $i\in\{1,...,k\}$, any connected component $\tilde{C}^{i}$ of $\widehat{\mathcal{F}}_{\tilde{b}_{i}}$ intersecting $\smash{\overline{C}^{i}_{m_{i}+(K_{1}+K_{2})h^{\alpha}-\delta}}$ is included in $\smash{\overline{C}_{i}}$.
\end{lmm}
\begin{proof}
First note that, under $E_{1}$, we have,
$$\overline{C}_{i}\subset \overline{C}^{i}_{m_{i}+(K_{1}+K_{2})h^{\alpha}-\delta}$$
and by Assumption \textbf{A4} and Lemma \ref{claim 1},
$$\mathcal{F}_{\tilde{b}_{i}-2\sqrt{3\kappa}N_{h}h^{\alpha}}\cap \overline{C}^{i}_{m_{i}+(K_{1}+K_{2})h^{\alpha}-\delta}\subset \overline{C}_{i}.$$ 
By Lemma \ref{lmm1b}, any connected component $\tilde{C}^{i}$ of $\widehat{\mathcal{F}}_{\tilde{b}_{i}}$ intersects $\mathcal{F}_{\tilde{b}_{i}-2\sqrt{3\kappa}N_{h}h^{\alpha}}$. Thus, if furthermore $\tilde{C}^{i}$ intersects $\overline{C}^{i}_{m_{i}+(K_{1}+K_{2})h^{\alpha}-\delta}$, it intersects $\overline{C}_{i}$. As, by Lemma \ref{claim 1}, $\tilde{b}_{i}>m_{i}-K_{1}h^{\alpha}$, Lemma \ref{interleavish 3} then ensures that any such $\tilde{C}^{i}$ is then included in $\overline{C}_{i}$.    
\end{proof}
By definition of $\tilde{b}_{i}$, a consequence of Lemma \ref{claim 2} is that, under $E_{1}$, for all $i\in\{1,...,k\}$, any connected component $\tilde{C}^{i}$ of $\widehat{\mathcal{F}}_{\tilde{b}_{i}}$ intersecting $\overline{C}^{i}_{m_{i}+(K_{1}+K_{2})h^{\alpha}-\delta}$ has associated birth time $\tilde{b}_{i}$. It now suffices to prove that, for all $i\in\{1,...,k\}$, there exists such a $\tilde{C}^{i}$ with associated death time $\tilde{d}_{i}$ satisfying $\tilde{b}_{i}-\tilde{d}_{i}>\delta/2$.
\begin{lmm}
\label{claim 3}
Under $E_{1}$, for all $i\in\{1,...,k\}$, there exists a connected 
component $\tilde{C}^{i}$ of $\widehat{\mathcal{F}}_{\tilde{b}_{i}}$ intersecting $\overline{C}^{i}_{m_{i}+(K_{1}+K_{2})h^{\alpha}-\delta}$ with an associated death time $\tilde{d}_{i}$ in $\widehat{\operatorname{dgm}(f)}$, satisfying, 
$$\tilde{b}_{i}-\tilde{d}_{i}>\delta/2.$$
\end{lmm}
\begin{proof}
Let $\tilde{C}^{i}$ a connected component of $\hat{\mathcal{F}}_{\tilde{b_{i}}}$ intersecting $\overline{C}^{i}_{m_{i}+(K_{1}+K_{2})h^{\alpha}-\delta}$. For all $\lambda\leq\tilde{b}_{i}$, we denote $\Tilde{C}_{\lambda}^{i}$ the connected component of $\widehat{\mathcal{F}}_{\lambda}$ containing $\tilde{C}^{i}$. To prove Lemma \ref{claim 3}, it suffices to show that, for all $\tilde{b}_{i}-\delta/2\leq \lambda\leq\tilde{b}_{i}$,
$$\text{if }x\in \tilde{C}_{\lambda}^{i}\text{ then } x\notin \widehat{\mathcal{F}}_{\lambda^{'}}\text{, for all } \lambda^{'}>\tilde{b}_{i}.$$
Applying Lemma \ref{interleavish 3} and Lemma \ref{claim 1}, we have that, for all $\tilde{b}_{i}-3\delta/4<\lambda\leq\tilde{b}_{i}$, under $E_{1}$, any connected components of $\widehat{\mathcal{F}}_{\lambda}$ intersecting $\overline{C}^{i}_{m_{i}+(K_{1}+K_{2})h^{\alpha}-\delta}$ is included in $\overline{C}^{i}_{m_{i}+(K_{1}+K_{2})h^{\alpha}-\delta}$. Thus, under $E_{1}$, for all $\tilde{b}_{i}-3\delta/4<\lambda\leq\tilde{b}_{i}$, 
$$\Tilde{C}_{\lambda}^{i}\subset \overline{C}^{i}_{m_{i}+(K_{1}+K_{2})h^{\alpha}-\delta}$$
Then, by definition of $\tilde{b}_{i}$, under $E_{1}$, for all $\tilde{b}_{i}-3\delta/4<\lambda\leq\tilde{b}_{i}$,
$$\text{if } x\in \tilde{C}_{\lambda}^{i} \text{ then } x\notin \widehat{\mathcal{F}}_{\lambda^{'}}\text{, for all }\lambda^{'}>\tilde{b}_{i}$$
and Lemma \ref{claim 3} follows.
\end{proof}
Equipped with Lemma \ref{claim 1}, \ref{claim 2} and \ref{claim 3}, we can now prove Theorem \ref{coroModes1}.
\begin{proof}[Proof of Theorem \ref{coroModes1}]
Let $f\in S_{d}(L,\alpha,\mu,R_{\mu},C,h_{0})$. From the proof of Proposition \ref{MainProp}, we have that,
\begin{equation}
\label{eq: dgm 1}
d_{\infty}\left(\widehat{\operatorname{dgm}(f)},\operatorname{dgm}(f)\right)\leq\left(K_{1}+K_{2}\right)\left(\frac{\log(n)}{n}\right)^{\frac{\alpha}{d+2\alpha}}.
\end{equation}
Then, under $E_{1}$, it follows that $\hat{k}=k$. Recalling that $\delta=Ch_{0}^{\alpha}$, it follows from (\ref{eq: concentration}), that there exist $A_{1}$ and $B_{1}$ such that $E_{1}$ occurs with probability at least $1-A_{1}\exp(-B_{1}(h_0/h)^{2\alpha})$.\\\\
The combination of Lemma \ref{claim 2} and \ref{claim 3} ensures that, under $E_{1}$, for all $i\in\{1,...,k\}$ there exists a connected component $\tilde{C}^{i}$ appearing in $\widehat{\mathcal{F}}$ at time $\tilde{b}_{i}$, included in $\overline{C}^{i}$, such that its associated lifetime exceeds $\delta/2$. Thus, by definition of $\hat{C}$, for all $i\in\{1,...,k\}$,  $\Tilde{C}^{i}\in \hat{C}$. Lemma \ref{satisfyC2b} and Assumption \textbf{A4} imply that, under $E_{1}$, $\overline{C}_{1}$, ..., $\overline{C}_{k}$ are disjoint, thus $\Tilde{C}^{1},...,\Tilde{C}^{k}$ are disjoint. Hence, as $k=\hat{k}$, we have,
$$\hat{C}=\left\{\hat{C}_{1},...,\hat{C}_{k}\right\}=\left\{\Tilde{C}^{1},...,\Tilde{C}^{k}\right\}$$
and
$$\hat{m}=\left\{\hat{m}_{1},...,\hat{m}_{k}\right\}=\left\{\tilde{b}_{1},...,\tilde{b}_{k}\right\}$$
As, for all $i\in\{1,...,k\}$, $x_{i}\in C_{i}\subset\overline{C}_{i}$ and $\hat{x}_{i}\in\hat{C}_{i}=\Tilde{C}^{i}\subset \overline{C}_{i}$, to bound the distance between $\hat{x}_{i}$ and $x_{i}$, it suffices to bound the diameter of $\overline{C}_{i}$, defined by,
$$\operatorname{diam}\left(\overline{C}_{i}\right)=\max\limits_{x,y\in \overline{C}_{i}}||x-y||_{\infty}.$$
Consider the event $E_{2}$ :
\begin{equation*}
\frac{(K_{1}+2\sqrt{3\kappa}N_{h})^{1/\alpha}}{C}h<h_{0}/2. 
\end{equation*}
As $2\sqrt{3\kappa}N_{h}\leq K_2$, if $K_1+K_2< \left(C\frac{h_0}{2h}\right)^{\alpha}$ occurs then $E_2$ occurs. Thus
 by (\ref{eq: concentration}), there exist $A_{2}$ and $B_{2}$ such that $E_{2}$ occurs with probability at least $1-A_{2}\exp(-B_{2}(h_0/h)^{2\alpha})$.\\\\
Under $E_{2}$, as we assumed $h_0\geq 2(\sqrt{d}+\lceil\sqrt{d} / \mu\rceil)h$, we have : 
$$\left(\frac{\left(K_1+2 \sqrt{3 M_{d, L, \alpha, R_\mu}} N_h\right)^{1 / \alpha}}{C}+(\sqrt{d}+\lceil\sqrt{d} / \mu\rceil)\right) h<h_0$$
Thus, as $f$ satisfies Assumption \textbf{A4}, under $E_{2}$, for all $i\in \{1,...,k\}$,
$$ \operatorname{diam}(\overline{C}_{i})\leq \left(\frac{(K_{1}+2\sqrt{3\kappa}N_{h})^{1/\alpha}}{C}+(\sqrt{d}+\lceil\sqrt{d}/\mu\rceil)\right)h.$$
And thus,
\begin{equation}
\label{bound xi}
\|x_{i}-\hat{x}_{i}\|_{\infty}\leq \left(\frac{(K_{1}+2\sqrt{3\kappa}N_{h})^{1/\alpha}}{C}+(\sqrt{d}+\lceil\sqrt{d}/\mu\rceil)\right)h.
\end{equation}
Consider the event $E_{3}$ :
$$\left(\frac{(K_{1}+2\sqrt{3\kappa}N_{h})^{1/\alpha}}{C}+(\sqrt{d}+\lceil\sqrt{d}/\mu\rceil)\right)\leq A$$
and the event $E_{4}$ : $K_{1}\leq A$.\\\\
Again as $2\sqrt{3\kappa}N_{h}\leq K_2$, by (\ref{eq: concentration}), there exist $A_{3}$, $B_{3}$, such that $E_{3}$ occurs with probability $1-A_{3}\exp(-B_{3}A^{2})$. And as $K_1\leq K_1+K_2$, by (\ref{eq: concentration}) there also exist $A_{4}$, $B_{4}$, such that $E_{4}$ occurs with probability $1-A_{4}\exp(-B_{4}A^{2})$.  Hence, it follows from Lemma \ref{claim 1} and (\ref{bound xi}) that, with probability at least,
\begin{align*}
    &1-\mathbb{P}\left(E_{1}^{c}\right)-\mathbb{P}\left(E_{2}^{c}\right)-\mathbb{P}\left(E_{3}^{c}\right)-\mathbb{P}\left(E_{4}^{c}\right)\\
    \geq&1-A_{1}\exp(-B_{1}(h_{0}/h)^{2\alpha})-A_{2}\exp(-B_{2}(h_{0}/h)^{2\alpha})\\
    &\quad-A_{3}\exp(-B_{3}A^{2})-A_{4}\exp(-B_{4}A^{2})
\end{align*}
$\hat{k}=k$ and for all $i\in\{1,...,k\}$ there exists (distinct) $(\hat{x}_{i},\hat{m}_{i})\in \hat{x}\times \hat{m}$ such that,
$$\|x_{i}-\hat{x}_{i}\|_{\infty}\leq A h$$
and
$$\left|\hat{m}_{i}-m_{i}\right|\leq A h^{\alpha}.$$
\end{proof}
\subsection{Proof of Theorem \ref{lowerboundsModes} and Proposition \ref{prp: tight separation}}
\label{Section : inf modes }
This section is devoted to the proofs of Theorem~\ref{lowerboundsModes} and Proposition~\ref{prp: tight separation}, both of which follow standard arguments for constructing minimax lower bounds.
\begin{proof}[Proof of Theorem \ref{lowerboundsModes}]
The proof essentially follows the same idea as the proof of Theorem 2 of \cite{ariascastro2021estimation}. For simplicity we fix $L=3$ and $C=1$ and suppose that $h_0<1/(2\sqrt{d})^{\alpha}$ and such that $1/(4\sqrt{d}h_{0})$ is an even integer, other cases can be treated similarly. Let 
$$\beta(x)=\left((2\sqrt{d})^{\alpha}h_{0}^{\alpha}-||x||_{\infty}^{\alpha}\right)_{+}.$$
 Let $x_{i_1,...,i_{d}}= 4\sqrt{d}h_{0}(i_1-1/2,...,i_{d}-1/2)$, $i_{1},...,i_{d}\in\left[1,1/(4\sqrt{d}h_{0})\right]\cap \mathbb{N}$, the centers of the hypercubes of the regular grid $G_{4\sqrt{d}h_{0}}$. We distinguish the sets :
$$P_{1}=\left\{i_{1},...,i_{d}\in\left[1,1/(4\sqrt{d}h_{0})\right]\cap \mathbb{N}\text{ s.t. }\sum_{j=1}^{d}i_j\text{ is even}\right\}$$ and 
$$P_{2}=\left\{i_{1},...,i_{d}\in\left[1,1/(4\sqrt{d}h_{0})\right]\cap \mathbb{N}\text{ s.t. }\sum_{j=1}^{d}i_j\text{ is odd}\right\}.$$
We define :
$$f_{0}(x)=1+\sum_{I\in P_{1}}\beta(x-x_I)-\sum_{I\in P_{2}}\beta(x-x_I).$$
Note that, as $|P_1|=|P_2|$ and for all $I,J\in P_{1}\cup P_{2}$, $\int_{[0,1]^{d}}\beta(x-x_I)dx=\int_{[0,1]^{d}}\beta(x-x_J)dx$, we have :
\begin{align*}
\int_{[0,1]^{d}}f_0(x)dx=1+\sum_{I\in P_{1}}\int_{[0,1]^{d}}\beta(x-x_I)dx-\sum_{I\in P_{2}}\int_{[0,1]^{d}}\beta(x-x_I)dx=1
\end{align*}
Now, as $h_{0}<1/(2\sqrt{d})^{\alpha}$, we have $f_{0}>0$ and thus $f_0$ is a probability density. As $\beta$ is $(1,\alpha)-$Hölder continuous and the maps $x\mapsto \beta(x-x_{I})$, $I\in P_1\cup P_2$ have disjoint support, $f_{0}$ is $(1,\alpha)-$Hölder continuous. It also follows that $f_{0}$ admits $1/(2(4\sqrt{d}h_{0})^{d})$ modes : the $(x_I)_{I\in P_1}$ all associated with the global maximum : $1+(2\sqrt{d})^{\alpha}h_{0}^{\alpha}$. Furthermore, as, for any $I\in P_1$, and any $x\in\{||x-x_I||_2\leq h_{0}\}\subset \{||x-x_I||_{\infty}\leq h_{0}\}$, $f_{0}(x)=\beta(x-x_{I})$ and  $x\mapsto \beta(x-x_I)$ satisfies \textbf{A4} for $C=1$, it follows that $f_{0}$ satisfies \textbf{A4} for $C=1$. Thus, $f_{0}$ belongs to $S_{d}(L,\alpha,\mu,R_{\mu},C,h_{0})$ for $C=1$ and all $L>C$, $0<\mu\leq 1$, $R_{\mu}>0$.
Now, let $0<h<\min(((1-(2\sqrt{d})^{\alpha}h_{0}^{\alpha})/2)^{1/\alpha},h_0)$ and define :
$$g(x)=2(h^{\alpha}-||x||_{\infty}^{\alpha})_{+}$$
with $\{y\geq 0\}$ the set of vectors of $\mathbb{R}^{d}$ with only positive coordinates. we fix $J_1\in P_2$ and, for all $I\in P_{1}$, we then define :
$$f_{I}(x)=f_{0}(x)+g(x-x_I-(h,...,h))-g(x-x_{J_1}-(h,...,h))$$
Now, as $h<((1-(2\sqrt{d})^{\alpha}h_{0}^{\alpha})/2)^{1/\alpha}$, $f_{I}\geq 1-(2\sqrt{d})^{\alpha}h_{0}^{\alpha}-2h^{\alpha}\geq 0$ and as $\int_{[0,1]^{d}}g(x-x_I)dx=\int_{[0,1]^{d}}g(x-x_{J_1})dx$, we have $\int_{[0,1]^{d}}f_{I}(x)dx=\int_{[0,1]^{d}}f_{0}(x)dx=1$. Thus, $f_{I}$ is a probability density. As $f_0$ and $g$ are $(1,\alpha)$ and $(2,\alpha)-$Hölder continuous, $f_{I}$ is $(3,\alpha)-$Hölder continuous. Hence, it satisfies \textbf{A1}, \textbf{A2} and \textbf{A3} for  $L=3$, and all $\mu\in]0,1]$ and $R_{\mu}>0$. Observe that, as $h\leq h_0$ and $||x_I-\tilde{x}_I||_{\infty}= h$, then :
$$\{||x-\tilde{x_i}||_{\infty}\leq h\}\subset\{||x-\tilde{x_i}||_{\infty}\leq h_0\}\subset \{||x-x_i||_{\infty}\leq 2h_0\}$$
and thus $f_I$ admits $1/(2(4\sqrt{d}h_{0})^{d})$ modes : the $(x_{J})_{J\in P_1\backslash\{I\}}$ and $\tilde{x}_{I}=x_{I}+(h,...,h)$ respectively associated to the local maxima :
$$1+(2\sqrt{d})^{\alpha}h_{0}^{\alpha}\text{ and }1+(2\sqrt{d})^{\alpha}h_{0}^{\alpha}+h^{\alpha}.$$
For all $J\in P_1\backslash\{I\}$, on $\{||x-x_J||_{2}\leq h_{0}\}\subset \{||x-x_J||_{\infty}\leq 2\sqrt{d}h_{0}\}$, $f_I(x)=f_0(x)$ and thus $f_I$ satisfies \textbf{A4} around all $(x_{J})_{J\in P_1\backslash\{I\}}$ for $C=1$. Now if $x\in \{||x-\tilde{x}_I||_{2}\leq h_{0}\}\subset \{||x-\tilde{x}_I||_{\infty}\leq \sqrt{d}h_{0}\}\subset \{||x-x_i||_{\infty}\leq 2\sqrt{d}h_0\}$, as $\alpha\in]0,1]$ we have :
\begin{align*}
f_I(\tilde{x}_{I})-f_I(x)&=h^{\alpha}+||x-x_I||_{\infty}^{\alpha}-2(h^{\alpha}-||x-\tilde{x}_I||_{\infty}^{\alpha})_{+}\\
&\geq-h^{\alpha}+2||x-\tilde{x}_I||_{\infty}^{\alpha}+||x-x_I||_{\infty}^{\alpha}\\
&\geq -h^{\alpha}+||x-\tilde{x}_I||_{\infty}^{\alpha}+(||x-\tilde{x}_I||_{\infty}+||x-x_I||_{\infty})^{\alpha}\\
&\geq -h^{\alpha}+||x-\tilde{x}_I||_{\infty}^{\alpha}+||x_I-\tilde{x}_I||_{\infty}^{\alpha}\\
&=||x-\tilde{x}_I||_{\infty}^{\alpha}
\end{align*}
which proves that $f_I$ satisfies \textbf{A4} around $\tilde{x}_I$ for $C=1$.
Then, $\{f_{0}\}\cup\{f_{I}, I\in P_1\}$ is a collection of probability densities of $S_{d}(L,\alpha,\mu,R_{\mu},C,h_{0})$, for $C=1$, $L=3$ and all $\mu\in]0,1]$ and $R_{\mu>0}$, that have, pairwise, modes separated by $h$ and associated local maxima separated by $h^{\alpha}$. More precisely for any distinct $f,\tilde{f}\in\{f_0\}\cup\{f_I, I\in P_1\}$, let $\mathcal{M}_f$ (resp. $\mathcal{M}_{\tilde{f}}$) the set of modes of $f$ (resp. $\tilde{f}$), $\Gamma(f,\tilde{f})$ the set of one to one maps from $\mathcal{M}_f$ to $\mathcal{M}_{\tilde{f}}$ and
$$\gamma^{*}\in\operatorname{argmin}_{\gamma\in \Gamma(f,\tilde{f})}\max_{x\in \mathcal{M}_{f}}||x-\gamma(x)||_{\infty}$$
we have :
$$\max_{x\in \mathcal{M}_f}||x-\gamma^{*}(x)||_{\infty}\geq h\text{ and}\max_{x\in \mathcal{M}_f}|f(x)-\tilde{f}(\gamma^{*}(x))|\geq h^{\alpha}.$$
Now, we prove that if $h\lesssim\left(\log(1/h_0)/n\right)^{\frac{1}{2\alpha+d}}$ then $\{f_{I}, I\in P_1\}$ are not all distinguishable from $f_{0}$ with high probability. To do so we use Theorem 2.6 from \cite{TsybakovBook}, which reduces the problem to studying the $\chi^{2}$ divergence between $\mathbb{P}^{n}_{f_{0}}$ and the $\mathbb{P}^{n}_{f_I}$, $I\in P_1$. More precisely, it suffices to prove that :
\begin{equation}
\label{eq:lowerbound modes}
  \frac{1}{|P_1|^2}\sum_{I\in P_1}\chi^{2}(\mathbb{P}^{n}_{f_{0}},\mathbb{P}^{n}_{f_{I}})=\frac{1}{|P_1|^2}\sum_{I\in P_1}\left(\int_{[0,1]^{d}}\frac{f_I^2}{f_0^2}\right)^n-1\underset{n\rightarrow+\infty}{\longrightarrow}0
\end{equation}
whenever $h=o\left(\left(\log(1/h_0)/n\right)^{\frac{1}{2\alpha+d}}\right)$. Let $I\in P_1$, we have :
\begin{align*}
\int_{[0,1]^{d}}\frac{f^{2}_{I}}{f_{0}}&= \int_{[0,1]^{d}}\frac{\left(f_{0}+g(x-\tilde{x}_I)-g(x-\tilde{x}_{J_1})\right)^{2}}{f_{0}}\\
&\leq 1+\frac{1}{1-(2\sqrt{d})^{\alpha}h_{0}^{\alpha}}\int_{[0,1]^{d}}g(x-\tilde{x}_I)^{2}dx+\frac{1}{1-(2\sqrt{d})^{\alpha}h_{0}^{\alpha}}\int_{[0,1]^{d}}g(x-\tilde{x}_{J_1})^{2}dx\\
&\leq 1+\frac{2}{1-(2\sqrt{d})^{\alpha}h_{0}^{\alpha}}\int_{||x||_{\infty}\leq h}g^{2}(x)dx\\
&\leq 1+\frac{2}{1-(2\sqrt{d})^{\alpha}h_{0}^{\alpha}}\int_{||x||_{\infty}\leq h}h^{2\alpha}dx\\
&=1+ O\left(h^{2\alpha+d}\right)
\end{align*}
It follows that if $h=o\left((\log(|P_1|)/n)^{\frac{1}{2\alpha+d}}\right)$, then we have (\ref{eq:lowerbound modes}). Finally, observing that $|P_1|\asymp 1/h_0$, Theorem \ref{lowerboundsModes} is proved.
\end{proof}
\begin{proof}[Proof of Proposition \ref{prp: tight separation}]
The proof is a standard Le Cam's two point argument, using again $\chi^2$ divergence. Once again for simplicity we will consider specific parameters $L$ and $C$, but other cases can be treated similarly. Let $f_{1}=1+||x||_{\infty}^{\alpha}$ and $\tilde{f}_{1}=f_{1}/Z_1$ with $Z_{1}=\int_{[0,1]^d}f_1$. The function $\tilde{f}_{1}$ is probability density satisfying \textbf{A1}-\textbf{A4}, for $C=1/Z_1$ and all $L>C$, $\mu\in]0,1]$, $R_{\mu}>0$ and $h_{0}\leq 1$. It admits a unique mode : $(1,...,1)$. Now, let $h<1/4$ and consider :
$$\tilde{f}_{0}(x)=\tilde{f}_{1}(x)+\frac{1}{Z_1}\left[\left(h^{\alpha}-\left\|x-(1/2,...,1/2)\right\|^{\alpha}_{\infty}\right)_{+}-2^{d}\left(h^{\alpha}-\|x\|^{\alpha}_{\infty}\right)_{+}\right].$$
Observing that :
$$\int_{[0,1]^{d}} \left(h^{\alpha}-\left\|x-(1/2,...,1/2)\right\|^{\alpha}_{\infty}\right)_{+}dx=2^d\int_{[0,1]^{d}}\left(h^{\alpha}-\|x\|^{\alpha}_{\infty}\right)_{+}$$
we have that $\int_{[0,1]^{d}}\tilde{f}_0=1$. For sufficiently small $h$, we have $\tilde{f}_0\geq 0$. Thus, $\tilde{f}_0$ is a probability density. It admits two modes $(1,...,1)$ and $(1/2,...,1/2)$ and by construction satisfies \textbf{A1}-\textbf{A4}, for $C=1/Z_1$, $h_{0}\asymp h$ and all $L>2^{d}C$, $\mu\in]0,1]$ and $R_{\mu}>0$ . Observe that, if we denote $\mathcal{M}_{0}=\{(1,...,1),(1/2,...,1/2)\}$ and $\mathcal{M}_1=\{(1,...,1))\}$ the sets of modes of $f_0$ and $f_1$, we have :
$$d_{H}\left(\mathcal{M}_1,\mathcal{M}_0\right)\geq 1/2.$$
Now, it suffices to show that :
$$n\chi^{2}(\tilde{f}_0,\tilde{f}_1)\underset{n\rightarrow+\infty}{\longrightarrow}0$$
whenever $h=o(n^{-\frac{1}{2\alpha+d}})$. 
\begin{align*}
\chi^{2}\left(\Tilde{f}_{0},\Tilde{f}_{1}\right)&=\int_{[0,1]^{d}}\frac{\Tilde{f}^{2}_{0}}{\Tilde{f}_{1}}-1\\
&= \int_{[0,1]^{d}}\frac{\left(\Tilde{f}_{1}(x)+\frac{1}{Z_1}\left(\left(h^{\alpha}-\left\|x-(1/2,...,1/2)\right\|^{\alpha}_{\infty}\right)_{+}-2^{d}\left(h^{\alpha}-\|x\|^{\alpha}_{\infty}\right)_{+}\right)\right)^{2}}{\Tilde{f}_{1}(x)}dx-1\\
&\leq \frac{1}{Z_1}\int_{\left\|x-(1/2,...,1/2)\right\|_{\infty}\leq h}\left(h^{\alpha}-\left\|x-(1/2,...,1/2)\right\|_{\infty}^{\alpha}\right)^{2}dx\\
&\quad+ \frac{2^d}{Z_1}\int_{\{\|x\|_{\infty}\leq h\}\cap[0,1]^{d}}\left(h^{\alpha}-\|x\|^{\alpha}_{\infty}\right)^{2}dx\\
&= \frac{2^{d+1}}{Z_1}\int_{\{\|x\|_{\infty}\leq h\}\cap[0,1]^{d}}\left(h^{\alpha}-\|x\|^{\alpha}_{\infty}\right)^{2}dx\\
&\leq  \frac{2^{d+1}}{Z_1}\int_{\{\|x\|_{\infty}\leq h\}\cap[0,1]^{d}}h^{2\alpha}dx\\
&= O\left(h^{2\alpha+d}\right).
\end{align*}
Thus, if  $h=o(n^{-\frac{1}{2\alpha+d}})$ then $n\chi^{2}(\tilde{f}_0,\tilde{f}_1)$ converges to zero and the result is proved.
\end{proof}
\section{A numerical illustration}
\label{Section: Numerical illustration}
From a more practical perspective, we now present a 2D numerical illustration of our results. As previously emphasized, an interesting aspect of our work is that we do not assume any regularity in the neighborhoods of the modes, even allowing for wild discontinuities at the modes themselves, for instance, modes may lie on the boundary of multiple distinct regular pieces. We consider a simple toy example of a density in the class \( S_{d}(L,\alpha,\mu,R_{\mu},C,h_{0}) \), which exhibits such typical discontinuities around some of its modes. On this example, we compare the performance of our persistence-based procedure with that of one of the most popular algorithms for mode inference: mean-shift. Our simulations reveal that recovering the exact number and locations of modes is challenging for the mean-shift algorithm. In contrast, and as predicted by our theoretical results, the persistence-based strategy we propose achieves accurate recovery.\\\\
More precisely, we consider the 2D density given by the normalization of $f$ define by,
\[
f(x, y) =
\begin{cases}
\frac{1}{2} \max\left(0, \tfrac{1}{8} - (x - \tfrac{1}{4})^2 - (y - \tfrac{1}{3})^2 \right) & \text{if } |x - \tfrac{1}{4}| + |y - \tfrac{1}{3}| \leq \tfrac{1}{8}, \\
 \frac{1}{2}\max\left(0,\; \tfrac{1}{8} - (x - \tfrac{1}{4})^2 - (y - \tfrac{2}{3})^2 \right) & \text{if } |x - \tfrac{1}{4}| + |y - \tfrac{2}{3}| \leq \tfrac{1}{8}, \\
\max\left(0,\; \tfrac{1}{8} - (x - \tfrac{2}{3})^2 - (y - \tfrac{1}{2})^2 \right) & \text{if } |x - \tfrac{2}{3}| + |y - \tfrac{3}{4}| \leq \tfrac{1}{4}\\
\max\left(0,\; \tfrac{1}{8} - (x - \tfrac{2}{3})^2 - (y - \tfrac{1}{2})^2 \right)&\text{if } |x - \tfrac{2}{3}| + |y - \tfrac{1}{4}| \leq \tfrac{1}{4}, \\
0 & \text{otherwise}.
\end{cases}
\]

This function admits three modes: two located within regular pieces, \( x_1 = (1/4,\,1/3) \) and \( x_2 = (1/4,\,2/3) \), and one at the boundary of three regular pieces, \( x_3 = (2/3,\,1/2) \). Around \( x_3 \), the function is not continuous in any full neighborhood, but only within two half-cones whose boundaries intersect at \( x_3 \). One can check that this function belongs to the class \( S_{d}(L,1,\mu,R_{\mu},C,h_{0}) \) for any \( \mu \leq 1/\sqrt{2} \), and for some constants \( L > 0 \), \( R_{\mu} > 0 \), \( C > 0 \), and \( h_0 > 0 \).

\begin{figure}[H]
    \centering
    \includegraphics[scale=0.7]{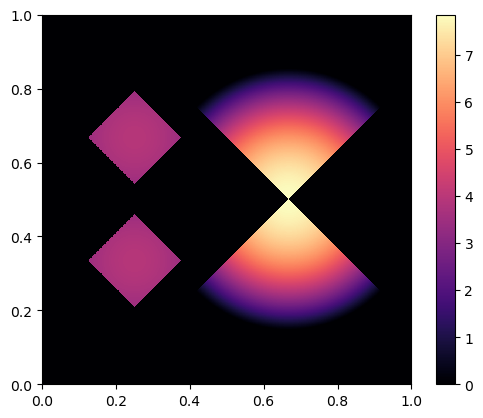}
    \caption{Graph of $f$ (normalized).}
\end{figure}

To evaluate the performance of our methods, we implement our estimator in \textsc{Python} using the package \texttt{GUDHI} \citep{gudhi:urm}, and in particular its \texttt{CubicalComplex} class \citep{gudhi:CubicalComplex} to compute persistence diagrams from histograms. For the mean-shift procedure, we use the \texttt{MeanShift} implementation from the \texttt{sklearn.cluster} module, which follows the algorithm proposed in \cite{Comanicu02}. We sample \( n = 40{,}000 \) points according to the normalized version of the density \( f \), using the rejection sampling method. Following our theoretical results, we calibrate our estimator by choosing the parameters \( \mu = 1/\sqrt{2} \) and \( h = (\log(n)/n)^{1/4}/8 \). We choose $\delta$ using the selection procedure of Appendix \ref{apdx:complement modes}. Since selecting a bandwidth for the mean-shift estimator is nontrivial in this setting, we run the algorithm with various bandwidths :
$$ b \in \{0.05,\, 0.15,\, 0.2,\, 0.23,\, 0.2425,\, 0.25,\, 0.3,\, 0.35,\, 0.4\},$$
which, based on our empirical observations, capture the range of behaviors of the mean-shift algorithm on this example.\\\\
In Figure~\ref{fig:meanshift}, we plot, for a fixed sample, the modes estimated by the mean-shift algorithm. Despite the relatively large sample size, the results are unsatisfactory. For most bandwidth values, the algorithm fails to recover the correct number of modes. For small bandwidths, it tends to split \( x_3 \) into two (or more) distinct modes, while for large bandwidths, it often merges \( x_1 \) and \( x_2 \) (and in extreme cases, even \( x_3 \)) into a single mode. At \( b = 0.2425 \), the correct number of modes is recovered, but the result remains unsatisfactory: \( x_1 \) and \( x_2 \) appear to be merged into a single mode, while \( x_3 \) is split into two. Overall, there is no clear ``sweet spot'' where \( x_1 \) and \( x_2 \) are properly separated, and \( x_3 \) is correctly identified as a single mode. When we omit the number of modes and instead focus on the Hausdorff distance between the sets of estimated modes and the true modes, we observe that lower bandwidths lead to more precise localization of the modes, with the estimated modes concentrating around the true ones. In contrast, in Figure~\ref{fig:ours}, for the same sample, our persistence-based strategy appears to perform much better, recovering the correct number of modes with estimated locations that are relatively close to the true ones.
\begin{figure}[H]
    \centering
    \includegraphics[scale=0.25]{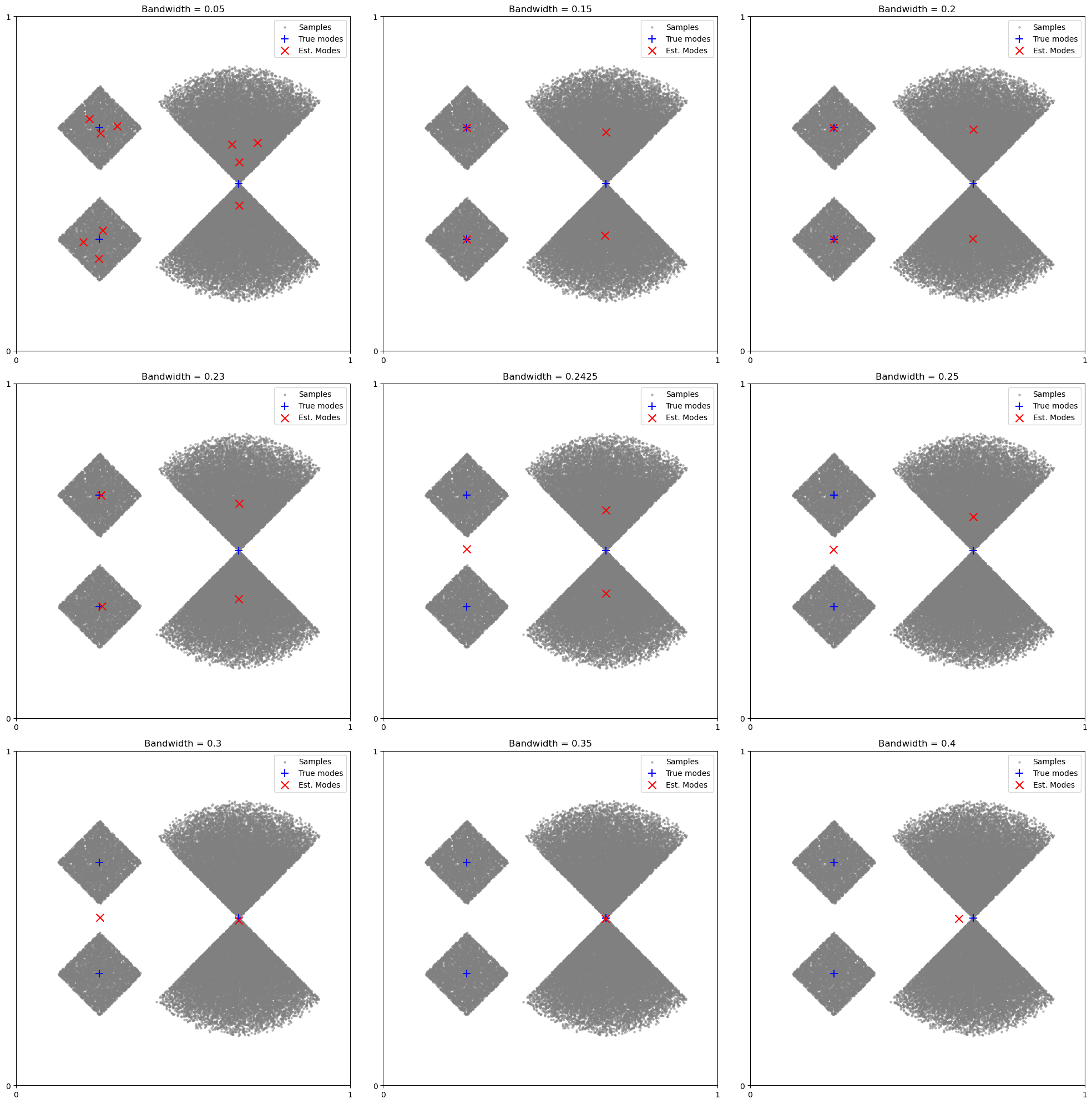}
    \caption{Behaviors of the mean-shift algorithm for various bandwidths \( b \in \{0.05,\, 0.15,\, 0.2,\, 0.23,\, 0.2425,\, 0.25,\, 0.3,\, 0.35,\, 0.4\} \). The algorithm fails to properly recover the modes for all bandwidth values.}
    \label{fig:meanshift}
\end{figure}
\begin{figure}
    \centering
    \includegraphics[scale=0.4]{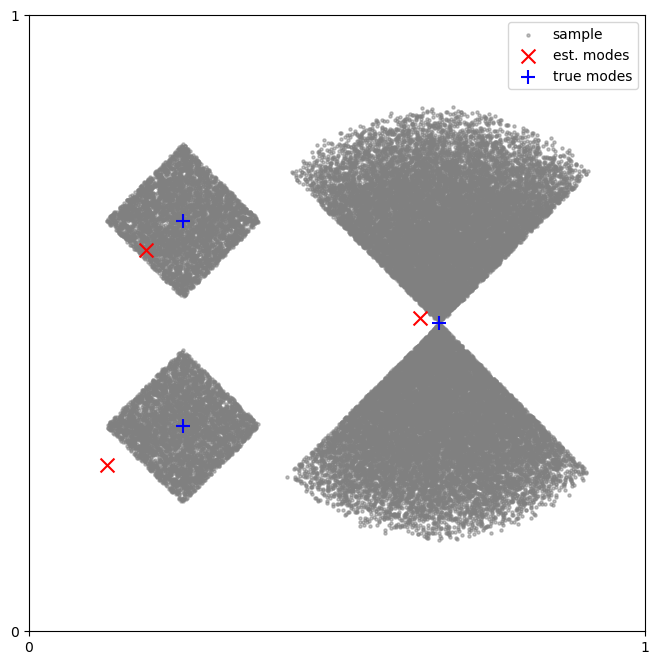}
    \caption{Behavior of the proposed persistence-based approach. The number of modes is exactly recovered and the modes relatively well located.}
    \label{fig:ours}
\end{figure}
To quantify this observation and ensure that it is not merely an anomaly specific to the sampled data, we perform a simulation over \( r = 100 \) samples. For each sample, we evaluate whether the correct number of modes has been recovered, calculate the average number of modes estimated, and compute the error in Hausdorff distance on the mode locations. The results are presented in Table~\ref{tab:results} and confirm our initial observations, with our method outperforming the mean-shift algorithm. Our method identifies the exact number of modes with an accuracy of 89\%, whereas the mean-shift fails to do so for all the \( r = 100 \) simulations, except in the case where \( b = 0.2425 \), which still results in a low accuracy of 37\%.  Regarding the error in Hausdorff distance, the error is consistently smaller for our approach compared to the mean-shift algorithm, except for the case where \( b = 0.05 \). However, in this case, the number of modes is really poorly estimated.

\begin{table}[H]
\begin{tabular}{cl|c|c|c|lllll}
\cline{3-5}
\multicolumn{1}{l}{}                              &          & \begin{tabular}[c]{@{}c@{}}Exact modes number\\  identification\end{tabular} & \begin{tabular}[c]{@{}c@{}}Avg. numbers of\\ modes\end{tabular} & \begin{tabular}[c]{@{}c@{}}Avg. Hausdorff\\ distance\end{tabular} &  &  &  &  &  \\ \cline{1-5}
\multicolumn{2}{|c|}{Persistence-based}                       & 89\%                                                                         & 3.11                                                            & 0.1294                                                            &  &  &  &  &  \\ \cline{1-5}
\multicolumn{1}{|c|}{\multirow{9}{*}{Mean-Shift}} & b=0.05   & 0\%                                                                          & 7.95                                                            & 0.1012                                                            &  &  &  &  &  \\ \cline{2-5}
\multicolumn{1}{|c|}{}                            & b=0.15   & 0\%                                                                          & 4                                                               & 0.1535                                                            &  &  &  &  &  \\ \cline{2-5}
\multicolumn{1}{|c|}{}                            & b=0.2    & 0\%                                                                          & 4                                                               & 0.1627                                                            &  &  &  &  &  \\ \cline{2-5}
\multicolumn{1}{|c|}{}                            & b=0.22   & 0\%                                                                          & 4                                                               & 0.1528                                                            &  &  &  &  &  \\ \cline{2-5}
\multicolumn{1}{|c|}{}                            & b=0.2425 & 37\%                                                                         & 2.51                                                            & 0.1673                                                            &  &  &  &  &  \\ \cline{2-5}
\multicolumn{1}{|c|}{}                            & b=0.25   & 0\%                                                                          & 2                                                               & 0.1719                                                            &  &  &  &  &  \\ \cline{2-5}
\multicolumn{1}{|c|}{}                            & b=0.3    & 0\%                                                                          & 1.65                                                            & 0.2673                                                            &  &  &  &  &  \\ \cline{2-5}
\multicolumn{1}{|c|}{}                            & b=0.35   & 0\%                                                                          & 1                                                               & 0.4481                                                            &  &  &  &  &  \\ \cline{2-5}
\multicolumn{1}{|c|}{}                            & b=0.4    & 0\%                                                                          & 1                                                               & 0.4102                                                            &  &  &  &  &  \\ \cline{1-5}
\end{tabular}
\caption{Simulation results over $r=100$ samples.}
\label{tab:results}
\end{table}
\section{Discussion}
Exploiting the link between modes and persistence, we propose an estimator that consistently infers the number of modes, their locations, and the associated local maxima under mild assumptions. The central contribution of our work is the identification of a threshold for mode separation (and equivalently prominence): below this threshold, mode detection is impossible, while above it, our procedure achieves optimal (or near-optimal) performance in the minimax sense.

Along the way, we extend the approach and results of~\cite{Henneuse24a} on \( H_0 \)-persistence diagram estimation in the density model. These findings are of independent interest, as they broaden the applicability of persistent homology inference for densities, which has so far been largely restricted to regular settings. Our results demonstrate strong robustness to discontinuities in \( H_0 \)-persistence diagram estimation. We are currently working on generalizing these methods to higher-order homology.

Despite its theoretical appeal and the encouraging simulations we ran, the proposed method has some practical limitations. In particular, it lacks adaptivity, as it relies on prior knowledge of the parameters \( \alpha \), \( \mu \), and \( \delta \), which are typically unknown in real-world scenarios. The dependence on \( \alpha \) can likely be mitigated using standard adaptive estimation techniques, such as Lepski’s method~\cite{Lepski91}. To reduce the dependence on \( \delta \), we provide a selection procedure in Appendix~\ref{apdx:complement modes}. We show that, with this additional selection step and under a slightly stronger threshold condition \( h_0 \), it is still possible to estimate the modes and associated local maxima at minimax rates. This procedure is also used in the simulations presented in Section~\ref{Section: Numerical illustration}, where it yields good empirical results. The dependence on \( \mu \), however, appears more problematic, and we currently have no clear approach to address it. Nevertheless, it is worth noting that this lack of adaptivity is not specific to our method but is a common limitation of existing multiple mode estimators. Developing adaptive strategies for multimodal inference, possibly through adaptive variants of our procedure, thus remains a promising direction for future research.

\section*{Acknowledgement}
The author would like to thank Frédéric Chazal and Pascal Massart for our (many) helpful discussions. The author acknowledges the support of the ANR TopAI chair (ANR–19–CHIA–0001)

\bibliographystyle{plainnat}
\bibliography{bibliographie}
\appendix
\section{Auxiliary results on persistence diagram estimation}
\label{sec: aux result}
In this section, we provide complementary results to Proposition~\ref{MainProp}. Namely, we establish Corollary~\ref{estimation borne sup}, which provides consistency guarantees in expectation, and Proposition~\ref{Lower Bound Diag}, in which we derive minimax lower bounds matching the rates obtained in Proposition~\ref{MainProp}, thereby proving the optimality of our procedures.
\begin{crllr}
\label{estimation borne sup}
Let $h\asymp\left(\frac{\log(n)}{n}\right)^{\frac{1}{d+2\alpha}}$, we have : 
$$\underset{f\in S_{d}(L,\alpha,\mu,R_{\mu})}{\sup}\quad\mathbb{E}\left(d_{\infty}\left(\widehat{\operatorname{dgm}(f)},\operatorname{dgm}(f)\right)\right)\lesssim \left(\frac{\log(n)}{n}\right)^{\frac{\alpha}{d+2\alpha}}$$
\end{crllr}
\begin{proof}
  The sub-Gaussian concentration provided by Proposition \ref{MainProp}, gives that, for all $A\geq 0$,
$$\mathbb{P}\left(\frac{d_{b}\left(\widehat{\operatorname{dgm}(f)},\operatorname{dgm}(f)\right)}{h^{\alpha}}\geq A\right)\leq \tilde{c}_{0}\exp\left(-\tilde{c}_{1}A^{2}\right).$$
 Now, we have,
 \begin{align*}
     \quad\mathbb{E}\left(\frac{d_{b}\left(\widehat{\operatorname{dgm}(f)},\operatorname{dgm}(f)\right)}{h^{p\alpha}}\right)&=\int_{0}^{+\infty}\mathbb{P}\left(\frac{d_{b}\left(\widehat{\operatorname{dgm}(f)},\operatorname{dgm}(f)\right)}{h^{\alpha}}\geq A\right)dA\\
     &\leq \int_{0}^{+\infty}\tilde{c}_{0}\exp\left(-\tilde{c}_{1}A^{2}\right)dA<+\infty.
\end{align*}
\end{proof}
\begin{prpstn}
\label{Lower Bound Diag}
    $$\underset{\widehat{\operatorname{dgm}(f)}}{\inf}\quad\underset{f\in S_{d}(L,\alpha,\mu,R_{\mu})}{\sup}\quad\mathbb{E}\left(d_{\infty}\left(\widehat{\operatorname{dgm}(f)},\operatorname{dgm}(f)\right)\right)\gtrsim \left(\frac{\log(n)}{n}\right)^{\frac{\alpha}{d+2\alpha}}$$
where the infimum is taken over all the estimators of $\operatorname{dgm}(f)$.
\end{prpstn}
The proof of Proposition \ref{Lower Bound Diag} follows essentially as the proof of Theorem 3 from \cite{Henneuse24a}. This follows from the minimax lower bound technique presented in Theorem 2.6 of \cite{TsybakovBook}. The idea is, for all, $$r_{n}=o\left(\left(\frac{\log(n)}{n}\right)^{\frac{\alpha}{d+2\alpha}}\right)$$
to exhibit a finite collection of functions in $S_{d}(L,\alpha,\mu,R_{\mu})$ such that their persistence diagrams are pairwise at distance $2r_{n}$ but indistinguishable, with high certainty.
\begin{proof}
 For $m$ integer in $\left[0, \lfloor1/h\rfloor\right]$, we define,
$$f_{h,m}(x_{1},...,x_{d})=\frac{L}{4}x_{1}^{\alpha}+\frac{L}{2}\left(h^{\alpha}-\|(x_{1},...,x_{d})-m/\lfloor1/h\rfloor(1,...,1)\|^{\alpha}_{\infty}\right)_{+}$$
and 
$$\Tilde{f}_{m,h}=\frac{1+f_{h,m}}{\|1+f_{h,m}\|_{1}}$$
The $f_{h,m}$ are $(L,\alpha)-$Hölder. As for all $m$, $f_{h,m}$ are positive functions, 
$$\|1+f_{h,m}\|_{1}\geq 1$$
consequently, $\Tilde{f}_{h,m}$ is $(L,\alpha)-$Hölder. As by construction  $\Tilde{f}_{h,m}$ are probability densities, they belong to $S_{d}(L,\alpha,\mu,R_{\mu})$ for all $\mu\in]0,1]$ and $R_{\mu}>0$.\\\\
We have, for sufficiently small $h$ and all $\frac{1}{4}\lfloor1/h\rfloor<m<\frac{3}{4}\lfloor1/h\rfloor$, 
$$\operatorname{dgm}(f_{h,m})=\left\{(L/4,-\infty),\left(\frac{L}{4}\left(\frac{m}{\lfloor1/h\rfloor}\right)^{\alpha}-\frac{L}{4}h^{\alpha},\frac{L}{4}\left(\frac{m}{\lfloor1/h\rfloor}\right)^{\alpha}-\frac{L}{2}h^{\alpha}\right)\right\}$$
and consequently,
\begin{align*}
&\operatorname{dgm}(\Tilde{f}_{h,m})=\left\{\left(\frac{L}{4\|1+f_{h,m}\|_{1}},-\infty\right),\left(\frac{1+\frac{L}{4}\left(\frac{m}{\lfloor1/h\rfloor}\right)^{\alpha}-\frac{L}{4}h^{\alpha}}{\|1+f_{h,m}\|_{1}},\frac{1+\frac{L}{4}\left(\frac{m}{\lfloor1/h\rfloor}\right)^{\alpha}-\frac{L}{2}h^{\alpha}}{\|1+f_{h,m}\|_{1}}\right)\right\}.
\end{align*}
As for all $\frac{1}{4}\lfloor1/h\rfloor<m,m^{'}<\frac{3}{4}\lfloor1/h\rfloor$, $\|1+f_{h,m}\|_{1}=\|1+f_{h,m^{'}}\|_{1}$, we then have, for all $\frac{1}{4}\lfloor1/h\rfloor<m\ne m^{'}<\frac{3}{4}\lfloor1/h\rfloor$,
\begin{equation}
\label{hypothesis separate}
d_{\infty}\left(\operatorname{dgm}(f_{h,m}),\operatorname{dgm}(f_{h,m^{'}})\right)\geq \frac{Lh^{\alpha}}{4\|1+f_{h,m}\|_{1}}\geq \frac{Lh^{\alpha}}{4(1+L)}
\end{equation}
We set $r_{n}=\frac{Lh^{\alpha}}{8(1+L)}$, then, by (\ref{hypothesis separate}), for all $\frac{1}{4}\lfloor1/h\rfloor<m\ne m^{'}<\frac{3}{4}\lfloor1/h\rfloor$,
$$d_{\infty}\left(\operatorname{dgm}(f_{h,m}),\operatorname{dgm}(f_{h,m^{'}})\right)\geq 2r_{n}$$
For a fixed signal $f$, denote $\mathbb{P}^{n}_{f}$ the joint probability distribution of $X_{1},...,X_{n}$. From Theorem 2.6 of \cite{TsybakovBook}, it now suffices to show that if $r_{n}=o\left(\left(\sfrac{\log(n)}{n}\right)^{\frac{p\alpha}{d+2\alpha}}\right)$, then, for a fixed $m$,
\begin{align}
\label{chi2}
  &\left(\frac{2}{\left\lfloor \frac{1}{h}\right\rfloor-2}\right)^{2}\sum\limits_{\frac{1}{4}\lfloor1/h\rfloor<m^{'} <\frac{3}{4}\lfloor1/h\rfloor,m^{'}\ne m}\chi^{2}\left(\mathbb{P}^{n}_{\Tilde{f}_{h,m^{'}}}\mathbb{P}^{n}_{\Tilde{f}_{h,m}}\right)\\
  =&\left(\frac{2}{\left\lfloor \frac{1}{h}\right\rfloor-2}\right)^{2}\sum\limits_{\frac{1}{4}\lfloor1/h\rfloor<m^{'} <\frac{3}{4}\lfloor1/h\rfloor,m^{'}\ne m}\mathbb{E}_{\mathbb{P}^{n}_{\Tilde{f}_{h,m}}}\left[\left(\frac{d\mathbb{P}^{n}_{\Tilde{f}_{h,m^{'}}}}{d\mathbb{P}^{n}_{\Tilde{f}_{h,m}}}\right)^{2}\right]-1  
\end{align}
converges to zero when $n$ converges to infinity. Note that,
\begin{align*}
\mathbb{E}_{\mathbb{P}^{n}_{\Tilde{f}_{h,m}}}\left(\left(\frac{d\mathbb{P}^{n}_{\Tilde{f}_{h,m^{'}}}}{d\mathbb{P}^{n}_{\Tilde{f}_{h,m}}}\right)^{2}\right)&=\mathbb{E}_{\mathbb{P}^{n}_{\Tilde{f}_{h,m}}}\left(\frac{\prod\limits_{i=1}^{n}\Tilde{f}^{2}_{h,m^{'}}(X_{i})}{\prod\limits_{i=1}^{n}\Tilde{f}^{2}_{h,m}(X_{i})}\right)\nonumber\\
&=\prod\limits_{i=1}^{n}\mathbb{E}_{\mathbb{P}_{\Tilde{f}_{h,m}}}\left(\frac{\Tilde{f}^{2}_{h,m^{'}}(X_{i})}{f^{2}_{d,h,m^{1},L,\alpha}(X_{i})}\right)\nonumber\\
&=\left(\mathbb{E}_{\mathbb{P}_{\Tilde{f}_{h,m}}}\left(\frac{\Tilde{f}^{2}_{h,m^{'}}(X_{1})}{\Tilde{f}^{2}_{h,m}(X_{1})}\right)\right)^{n}\nonumber.\\
\end{align*}
We denote $H_{m}$ the hypercube defined by $\|(x_{1},...,x_{d})-m/\lfloor1/h\rfloor(1,...,1)\|\leq h$. Remarking that,
\begin{align*}
\Tilde{f}_{h,m^{'}}=\Tilde{f}_{h,m}&+\frac{L}{2\|1+f_{h,m}\|_{1}}\left(h^{\alpha}-\|(x_{1},...,x_{d})-m/\lfloor1/h\rfloor(1,...,1)\|^{\alpha}_{\infty}\right)_{+}\\
&-\frac{L}{2\|1+f_{h,m}\|_{1}}\left(h^{\alpha}-\|(x_{1},...,x_{d})-m^{'}/\lfloor1/h\rfloor(1,...,1)\|^{\alpha}_{\infty}\right)_{+}
\end{align*}
and 
$$\Tilde{f}_{h,m}(x)\geq \frac{1}{1+L}$$
We then have,
\begin{align}
&\mathbb{E}_{\mathbb{P}_{\Tilde{f}_{h,m}}}\left(\frac{\Tilde{f}^{2}_{h,m^{'}}(X_{1})}{\Tilde{f}^{2}_{h,m}(X_{1})}\right)\nonumber\\
&=1+\frac{L}{\|1+f_{h,m}\|_{1}}\int_{H_{m}}h^{\alpha}-\|(x_{1},...,x_{d})-m/\lfloor1/h\rfloor(1,...,1)\|^{\alpha}_{\infty}dt_{1}...dt_{d}\nonumber\\
&\quad-\frac{L}{\|1+f_{h,m}\|_{1}}\int_{H_{m^{'}}}h^{\alpha}-\|(x_{1},...,x_{d})
m^{'}/\lfloor1/h\rfloor(1,...,1)\|^{\alpha}_{\infty}dt_{1}...dt_{d}\nonumber\\
&\quad + \frac{L^{2}}{4\|1+f_{h,m}\|^{2}_{1}}\int_{H_{m}}\frac{\left(h^{\alpha}-\|(x_{1},...,x_{d})-m/\lfloor1/h\rfloor(1,...,1)\|^{\alpha}_{\infty}\right)^{2}}{\Tilde{f}_{h,m}}dt_{1}...dt_{d}\nonumber\\
&\quad + \frac{L^{2}}{4\|1+f_{h,m}\|^{2}_{1}}\int_{H_{m^{'}}}\frac{\left(h^{\alpha}-\|(x_{1},...,x_{d})-m^{'}/\lfloor1/h\rfloor(1,...,1)\|^{\alpha}_{\infty}\right)^{2}}{\Tilde{f}_{h,m}}dt_{1}...dt_{d}\nonumber\\
&= 1+ \frac{L^{2}}{2\|1+f_{h,m}\|^{2}_{1}}\int_{H_{m}}\frac{\left(h^{\alpha}-\|(x_{1},...,x_{d})-m/\lfloor1/h\rfloor(1,...,1)\|^{\alpha}_{\infty}\right)^{2}}{\Tilde{f}_{h,m}}dt_{1}...dt_{d}\nonumber\\
&\leq 1+ L^{2}(1+L)\int_{H_{m}}\left(h^{\alpha}-\|(x_{1},...,x_{d})-m/\lfloor1/h\rfloor(1,...,1)\|^{\alpha}_{\infty}\right)^{2}dt_{1}...dt_{d}\nonumber\\
&\leq 1+ L^{2}(1+L)\int_{H_{m}}h^{2\alpha}dt_{1}...dt_{d}\nonumber\\
&\quad+L^{2}(1+L)\int_{H_{m}}\|(x_{1},...,x_{d})-m/\lfloor1/h\rfloor(1,...,1)\|^{2\alpha}_{\infty}dt_{1}...dt_{d}\nonumber\\
&\leq 1+ 2L^{2}(1+L)h^{2\alpha+d}=1+O(h^{2\alpha+d})\nonumber
\end{align}
Hence, if $h=o\left(\left(\frac{\log(n)}{n}\right)^{\frac{1}{d+2\alpha}}\right)$, we have that (\ref{chi2}) converges to zero when $n$ converges to infinity. Consequently, if $r_{n}=o\left(\left(\frac{\log(n)}{n}\right)^{\frac{\alpha}{d+2\alpha}}\right)$, then $h=o\left(\left(\frac{\log(n)}{n}\right)^{\frac{1}{d+2\alpha}}\right)$ and we get the conclusion.
\end{proof}
\section{Complement on modes inference : handling unknown $\delta$}
\label{apdx:complement modes}
In this section we adapt the procedure for modes inference presented in \ref{sec: Proc and result} to the more realistic case where $\delta$ is unknown. The idea is selecting $\delta$ through penalization. The procedure remains the same, except for the construction of $\hat{k}$. Let,
\begin{equation}
\label{eq: minimization}
    R(\delta)=d_{\infty}\left(\widehat{\operatorname{dgm}(f)},\overline{\operatorname{dgm}(f)}^{\delta}\right)+\frac{h^{\alpha}}{\delta}
\end{equation}
with
$$\overline{\operatorname{dgm}(f)}^{\delta}=\left\{(b,d)\in\widehat{\operatorname{dgm}(f)}\text{ s.t }b-d\geq \delta\right\}.$$
We consider,
$$\hat{\delta}=\operatorname{argmin}_{\delta\in]0,+\infty[} R(\delta)=\operatorname{argmin}_{\delta=|b-d|, (b,d)\in \widehat{\operatorname{dgm}(f)}} R(\delta)$$
the existence of this minimum is proved in Lemma \ref{adaptive length lemma} and $\hat{k}$ is then defined as,
$$\hat{k}=\left|\left\{(b,d)\in\widehat{\operatorname{dgm}\left(f\right)}, b-d>\hat{\delta}\right\}\right|.$$
With this choice of $\hat{\delta}$, we now show that, under the assumption :$$\delta=Ch_{0}^{\alpha}>\sqrt{\frac{12}{7}h^{\alpha}}\text{, }h\asymp \left(\sfrac{\log(n)}{n}\right)^{\frac{1}{d+2\alpha}}$$ 
our procedure allows us to infer, with high probability, the exact number of modes, and to estimate their locations and associated maxima at the minimax rates (up to a logarithmic factor), as in Theorem~\ref{coroModes1}. This additional assumption requires  that $h_{0}$ is of order at least $\asymp \left(\sfrac{\log(n)}{n}\right)^{\frac{1}{2(d+2\alpha)}}$ or equivalently that $\delta$ is of order at least $\left(\sfrac{\log(n)}{n}\right)^{\frac{\alpha}{2(d+2\alpha)}}$. In particular, this covers the case where the parameters $L$, $\alpha$, $\mu$, $R_{\mu}$, $C$ and $h_0$ are fixed and $n$ is sufficiently large. 
\begin{thrm}
\label{ThmModes2}
Let $f\in S_{d}(L,\alpha,\mu,R_{\mu},C,h_{0})$, $h\simeq\left(\sfrac{\log(n)}{n}\right)^{\frac{1}{d+2\alpha}}$ and suppose $\delta=Ch_{0}^{\alpha}>\sqrt{\frac{12}{7}h^{\alpha}}$. There exist $C^{'}_{0}$,..., $C^{'}_{3}$ depending only on $L$ ,$\alpha$, $\mu$, $R_{\mu}$, $C$ such that, for all $A\geq 0$ with probability at least $1-C^{'}_{0}\exp(-C^{'}_{1}A^{2})-C^{'}_{2}\exp(-C^{'}_{3}(h_{0}/h)^{-\alpha})$,
$$k=\hat{k}$$
and, for all $i\in\{1,...,k\}$, there exists distinct $(\hat{x}_{i},\hat{m}_{i})$ such that,
$$\|\hat{x}_{i}-x_{i}\|_{\infty}\leq A h$$
and 
$$|\hat{m}_{i}-m_{i}|\leq A h^{\alpha}.$$
\end{thrm}
The key idea to prove this result is contained in the following lemma. Let $\delta^{*}$, the distance between $\operatorname{dgm(f)}$ and the diagonal and $\Tilde{\delta}$ the distance between the set,
$$\left\{(b,d)\in\widehat{\operatorname{dgm}(f)}\text{ s.t. }b-d>(K_{1}+K_{2})h^{\alpha}\right\}$$
and the diagonal. Note that $\delta^{*}$ and $\Tilde{\delta}$ are possibly infinite. 
\begin{lmm}
\label{adaptive length lemma}
Let $h\simeq\left(\sfrac{\log(n)}{n}\right)^{\frac{1}{d+2\alpha}}$, and $E_{5}$ the event :
$$\max\left((K_{1}+K_{2})^{2},K_{1}+K_{2}\right)h_{\alpha}<1/2.$$
If $\delta^{*}\geq \delta>\sqrt{\frac{12}{7}h^{\alpha}}$, under $E_{5}\cap E_{1}$, $R$ admits a minimum over $]0,1]$ attained only at $\delta=\Tilde{\delta}$. 
\end{lmm}
\begin{proof}
   Under $E_{1}$, by Theorem \ref{MainProp}, 
   \begin{equation*}
   \label{eq: l tilde}
    7\delta^{*}/8\leq \delta^{*}-(K_{1}+K_{2})h^{\alpha}\leq \Tilde{\delta}\leq \delta^{*}+(K_{1}+K_{2})h^{\alpha}\leq 9\delta^{*}/8
   \end{equation*}
Then, we have, under $E_{1}$,
   $$R\left(\Tilde{\delta}\right)\leq (K_{1}+K_{2})h^{\alpha}+\frac{h^{\alpha}}{\Tilde{\delta}}\leq (K_{1}+K_{2})h^{\alpha}+\frac{8h^{\alpha}}{7\delta^{*}}<\frac{\delta^{*}}{8}+\frac{8h^{\alpha}}{7\delta^{*}}$$
   \begin{itemize}
   \item If $0<\delta\leq(K_{1}+K_{2})h^{\alpha}$, under $E_{1}$ and $E_{5}$,
   $$R\left(\delta\right)\geq \frac{h^{\alpha}}{(K_{1}+K_{2})h^{\alpha}}=\frac{1}{(K_{1}+K_{2})}> (K_{1}+K_{2})h^{\alpha}+\frac{8h^{\alpha}}{7\delta^{*}}\geq R\left(\Tilde{\delta}\right).$$
   \item If $(K_{1}+K_{2})h^{\alpha}<\delta<\Tilde{\delta}$, then by definition of $\Tilde{\delta}$,
   $$d_{\infty}\left(\widehat{\operatorname{dgm}(f)},\overline{\operatorname{dgm}(f)}^{\delta}\right)=d_{\infty}\left(\widehat{\operatorname{dgm}(f)},\overline{\operatorname{dgm}(f)}^{\Tilde{\delta}}\right)$$
   and $h^{\alpha}/\delta>h^{\alpha}/\Tilde{\delta}$. Thus, $R(\delta)>R\left(\Tilde{\delta}\right)$.
   \item If $\delta>\Tilde{\delta}$, under $E_{1}$ and as $\delta^{*}\geq \delta>\sqrt{\frac{12}{7}h^{\alpha}}$,
   $$R(\delta)\geq \Tilde{\delta}\geq 7\delta^{*}/8> \frac{\delta^{*}}{8}+\frac{8h^{\alpha}}{7\delta^{*}}\geq R(\Tilde{\delta}).$$
   \end{itemize}
   Hence, in all cases, if $\delta\ne\Tilde{\delta}$, $R(\delta)>R(\Tilde{\delta})$.
\end{proof}
\begin{proof}[Proof of Theorem \ref{ThmModes2}]
    Lemma \ref{adaptive length lemma}, imply that, under $E_{1}\cap E_{5}$, for sufficiently large $n$,
    $$\hat{\delta}=\min(\Tilde{\delta},1)\leq \Tilde{\delta}$$
    Thus, by definition of $\Tilde{\delta}$, 
    \begin{align*}
    d_{\infty}\left(\operatorname{dgm}(f),\overline{\operatorname{dgm}(f)}^{\hat{\delta}}\right)&\leq d_{\infty}\left(\widehat{\operatorname{dgm}(f)},\operatorname{dgm}(f)\right) + d_{\infty}\left(\widehat{\operatorname{dgm}(f)},\overline{\operatorname{dgm}(f)}^{\Tilde{\delta}}\right)\\
    &\leq 2(K_{1}+K_{2})h^{\alpha}.
    \end{align*}
    By \eqref{eq: concentration} there exist $A_{5}$ and $B_{5}$ such that $E_{5}$ occurs with probability $1-B_{5}\exp(-A_{5}h^{-2\alpha})\geq 1-B_{5}\exp(-A_{5}(h_{0}/h)^{2\alpha})$. The proof then follows as in the proof of Theorem \ref{coroModes1}.
    \end{proof}
\section{Proof of Lemma \ref{deformation retract}}
\label{appendix : def retract}
This section is dedicated to the proof of Lemma \ref{deformation retract}.
\begin{proof}[Proof of Lemma \ref{deformation retract}]
The first part of the claim is Theorem 12 of \cite{kim2020homotopy}. A standard technique to construct deformation retraction in differential topology is to exploit the flow coming from a smooth underlying vector field. In \cite{kim2020homotopy}, their idea is to use the vector field defined on  $B_{2}(K,r)\setminus K$, by $W(x)=-\nabla_{d_K}(x)$. But this vector field is not continuous. To overcome this issue, they construct a locally finite covering $(U_{x_{i}})_{i\in \mathbb{N}}$ of $B_{2}(K,r)\setminus K$ and an associated partition of the unity $(\rho_{i})_{{i\in \mathbb{N}}}$, such that $\overline{W}(x)=\sum_{i\in \mathbb{N}}\rho_{i}(x)w(x_{i})$ shares the same dynamic as $W$. More precisely, they show that $\overline{W}$ induces a smooth flow $C$ that can be extended on $B_{2}(K,r)\times [0,+\infty[$ such that for all $x\in B_{2}(K,r)$, for all $t\geq 2r/\mu^{2}$, $C(x,t)=C(x,2r/\mu^{2})\in K$. We make an additional remark, denote $d_{C}$ the arc length distance along $C$, as $||W||\leq 1$, we have,
\begin{align}
d_{C}(x,C(x,2r/\mu^{2}))&=\int_{0}^{2r/\mu^{2}}\left|\frac{\partial }{\partial t}C(x,t)\right|dt\nonumber\\
&\leq\int_{0}^{2r/\mu^{2}}||W(C(x,t)||_{2}dt\nonumber\\
&\leq 2r/\mu^{2}\label{arc length bound}
\end{align} Thus for all $t\in[0,+\infty[$, $||x-C(x,t)||_{2}\leq 2r/\mu^{2}$. Now taking, $F(x,s)=C(x,2rs/\mu^{2})$, provide a deformation retract of $B_{2}(K,r)$ onto $K$ and the associated retraction $R: x\mapsto F(x,1)$ satisfies, $R(x)\in \overline{B}_{2}(x,2r/\mu^{2})$ by (\ref{arc length bound}).
\end{proof}
\section{Proof of Lemma \ref{lmm1b}}
\label{Appendix : lmm1b}
This Appendix is dedicated to the proof of Lemma \ref{lmm1b}.
\begin{proof}[Proof of Lemma \ref{lmm1b}]
Let $f\in S_{d}(L,\alpha,\mu,R_{\mu})$. Let $H\subset \mathcal{F}_{\lambda-2\sqrt{3\kappa}N_{h}h^{\alpha}}^{c}\cap \mathfrak{C}_{h}$ and $H^{'}\subset\mathcal{F}_{\lambda+2\sqrt{3\kappa}N_{h}+h^{\alpha}}\cap \mathfrak{C}_{h}$. note that $f|_{H^{'}}<\lambda-2\sqrt{3\kappa}N_{h}h^{\alpha}$ and $f|_{H}\geq\lambda+2\sqrt{3\kappa}N_{h}h^{\alpha}$. Thus,
$$\mathbb{P}(X_{1}\in H|N_{h})<(\lambda-2\sqrt{3\kappa}N_{h}h^{\alpha})h^{d}$$
and 
$$\mathbb{P}(X_{1}\in H^{'}|N_{h})\geq(\lambda+2\sqrt{3\kappa}N_{h}h^{\alpha})h^{d}.$$
As, supposing $N_{h}$ known, $|X\cap H|$ is $(n,\mathbb{P}(X_{1}\in H|N_{h})-$binomial and $|X\cap H^{'}|$ is $(n,\mathbb{P}(X_{1}\in H^{'}|N_{h})-$binomial we have that,
$$\mathbb{E}\left[\frac{|X\cap H|}{nh^{d}}|N_{h}\right]=\frac{\mathbb{P}(X_{1}\in H|N_{h})}{h^{d}}<\lambda-2\sqrt{3\kappa}N_{h}h^{\alpha}$$
and 
$$\mathbb{E}\left[\frac{|X\cap H^{'}|}{nh^{d}}|N_{h}\right]=\frac{\mathbb{P}(X_{1}\in H^{'}|N_{h})}{h^{d}}\geq\lambda+2\sqrt{3\kappa}N_{h}h^{\alpha}.$$
Now, observe that :
\begin{align*}
\frac{|X\cap H|}{nh^{d}}&= \mathbb{E}\left[\frac{|X\cap H|}{nh^{d}}|N_{h}\right]+\frac{|X\cap H|}{nh^{d}}-\mathbb{E}\left[\frac{|X\cap H|}{nh^{d}}|N_{h}\right]\\
&= \mathbb{E}\left[\frac{|X\cap H|}{nh^{d}}|N_{h}\right]+\frac{|X\cap H|}{nh^{d}}-\mathbb{E}\left[\frac{|X\cap H|}{nh^{d}}\right]-\mathbb{E}\left[\frac{|X\cap H|}{nh^{d}}-\mathbb{E}\left[\frac{|X\cap H|}{nh^{d}}\right]|N_{h}\right].\\
\end{align*}
By definition of $N_{h}$, we have:
$$\left|\frac{|X\cap H|}{nh^{d}}-\mathbb{E}\left[\frac{|X\cap H|}{nh^{d}}\right]\right|\leq N_{h}\sqrt{3\kappa\frac{\log\left(1/h^{d}\right)}{nh^{d}}}$$
and 
$$\mathbb{E}\left[\left|\frac{|X\cap H|}{nh^{d}}-\mathbb{E}\left[\frac{|X\cap H|}{nh^{d}}\right]\right||N_h\right]\leq N_{h}\sqrt{3\kappa\frac{\log\left(1/h^{d}\right)}{nh^{d}}}.$$
Thus, by the choice made for $h$, it follows that :
\begin{align*}
\frac{|X\cap H|}{nh^{d}}&\leq \mathbb{E}\left[\frac{|X\cap H|}{nh^{d}}|N_{h}\right]+2\sqrt{3\kappa}N_{h}\sqrt{\frac{\log\left(1/h^{d}\right)}{nh^{d}}}\\
&\leq \lambda+2\sqrt{3\kappa}N_{h}\left(\sqrt{\frac{\log\left(1/h^{d}\right)}{nh^{d}}}-h^{\alpha}\right)<\lambda
\end{align*}
and similarly,
\begin{align*}
\frac{|X\cap H^{'}|}{nh^{d}}&\geq \mathbb{E}\left[\frac{|X\cap H^{'}|}{nh^{d}}|N_{h}\right]-2\sqrt{3\kappa}N_{h}\sqrt{\frac{\log\left(1/h^{d}\right)}{nh^{d}}}\\
&\geq \lambda+2\sqrt{3\kappa}N_{h}\left(h^{\alpha}-\sqrt{\frac{\log\left(1/h^{d}\right)}{nh^{d}}}\right)>\lambda
\end{align*}
and the proof is complete.
\end{proof}
\section{Proof of Lemma \ref{lemmBound}}
\label{Appendix: Lemmbound}
This Appendix is dedicated to the proof of Lemma \ref{lemmBound}.
\begin{proof}[Proof of Lemma \ref{lemmBound}]
Let $i\in\{1,...,l\}$. First suppose that $d=1$, in this case, $\partial M_{i}$ is the union of two singletons $\{x_{1}\}\cup\{x_{2}\}$. The $\mu$-medial axis is equal here to the set of critical points, which is simply given by $\{(x_{1}+x_{2})/2\}$. Thus, by Assumption \textbf{A3}, we have $1\geq |x_{1}-x_{2}|\geq 2R_{\mu}$. Let $x\in M_{i}$, by Assumption \textbf{A1},
$$2R_{\mu}\left(f(x)-L\right)\leq \int_{x_{1}}^{x_{2}}f(u)du.$$
Hence, as $f$ is a density,
$$\int_{x_{1}}^{x_{2}}f(u)du\leq 1$$
and 
$$f(x)\leq \frac{1}{2R_{\mu}}+L:=\kappa(1,L,\alpha,R_{\mu}).$$
Now, for $d>1$, if the $\mu-$medial axis of  $\partial M_{i}$ intersected with $M_{i}$ is empty, then $M_{i}^{c}$ has infinite $\mu-$reach. By Theorem 12 of \cite{kim2020homotopy}, this implies that for any $r>0$, $\left(M_{i}^{c}\right)^{r}$ is homotopy equivalent to $M_{i}^{c}$. For $r>\sqrt{d}$, $\left(M_{i}^{c}\right)^{r}=[0,1]^{d}$, and thus has trivial $s-$homotopy groups for $s>0$ but $M_{i}^{c}$ contains a non-trivial $d-1$ cycles represented by $\partial M_{i}$, which gives a contradiction. Consequently, $M_{i}$ contains a point $x_{1}$ belonging to the $\mu-$medial axis of  $\partial M_{i}$. By Assumption \textbf{A3}, $B_{2}(x_{1},R_{\mu}/2)$ is contained into $M_{i}$. Thus, Assumption \textbf{A1} ensures that,
$$V_{d}(R_{\mu}/2))(f(x_{1})-L(2R_{\mu})^{\alpha})\leq \int_{B_{2}(x_{1},R_{\mu}/2)}f(u)du$$
with $V_{d}(R_{\mu}/2))$ the volume of the $d-$dimensional Euclidean ball of radius $R_{\mu}/2$. As $f$ is a density,
$$\int_{B_{2}(x_{1},R_{\mu}/2)}f(u)du\leq 1$$
and hence,
$$f(x_{1})\leq \frac{1}{V_{d}(R_{\mu}/2))}+LR_{\mu}^{\alpha}.$$
Now, let $x\in M_{i}$, we have $||x-x_{1}||\leq \sqrt{d}$, using Assumption \textbf{A1} again, it follows that,
$$f(x)\leq f(x_{1})+Ld^{\alpha/2}\leq  \frac{1}{V_{d}(R_{\mu}))}+L(R_{\mu}^{\alpha}+d^{\alpha/2}):=\kappa(d,L,\alpha,R_{\mu}).$$
To conclude, it suffices to note that Assumption \textbf{A2} ensures that,
$$\sup\limits_{x\in \bigcup\limits_{i=1}^{l}M_{i}}f(x)=\sup\limits_{x\in [0,1]^{d}}f(x)$$
and thus for all $x\in[0,1]^{d}$,
$$f(x)\leq \kappa(1,L,\alpha,R_{\mu}).$$
\end{proof}
\section{Proof for the $q-$tameness}
\label{Appendix:q-tame}
Here, we prove the claim that the persistence modules we introduced are $q-$tame and consequently their persistence diagrams are well-defined.
\begin{prpstn}
\label{LmmQtame1}
Let $f\in S_{d}(L,\alpha,\mu,R_{\mu})$ then $f$ is $q$-tame.
\end{prpstn}
\begin{proof}
 Let $s\in\mathbb{N}$ and $\mathbb{V}_{0,f}$ the persistence module (for the $0-$th homology) associated to the superlevel filtration $\mathcal{F}$, and for fixed levels $\lambda>\lambda^{'}$ let denote $v_{\lambda}^{\lambda^{'}}$ the associated map. Let $\lambda\in\mathbb{R}$ and $h< \frac{R_{\mu}}{2}$. By Lemma \ref{satisfyC2b},  $$v_{\lambda}^{\lambda-L\left(\sqrt{d}(2+2/\mu^{2})\right)^{\alpha}h^{\alpha}}=\phi_{\lambda}\circ \Tilde{i}_{\lambda}$$
 with,
 $$\Tilde{i}_{\lambda}:H_{0}\left(\mathcal{F}_{\lambda}\right)\rightarrow H_{0}\left(\mathcal{F}_{\lambda}^{h}\right)\text{ induced by inclusion.}$$  
 By assumption \textbf{A1} and \textbf{A2}, $\mathcal{F}_{\lambda}$ is compact. As $[0,1]^{d}$ is triangulable,  $\mathcal{F}_{\lambda}$ is covered by finitely many cells of the triangulation, and so there is a finite simplicial complex $K$ such that $\overline{\mathcal{F}_{\lambda}}\subset K \subset \mathcal{F}_{\lambda}^{h}$. Consequently, $\Tilde{i}_{\lambda}$ factors through the finite dimensional space $H_{0}(K)$ and is then of finite rank.\\\\
 Thus, $v_{\lambda}^{\lambda-L\left(\sqrt{d}(2+2/\mu^{2})\right)^{\alpha}h^{\alpha}}$ is of finite rank for all $0<h<\frac{R_{\mu}}{2}$. As for any $\lambda>\lambda^{'}>\lambda^{''}$, $v_{\lambda}^{\lambda^{''}}=v_{\lambda^{'}}^{\lambda^{''}}\circ v_{\lambda}^{\lambda^{'}}$ we then have that $v_{\lambda}^{\lambda^{'}}$ is of finite rank for all $\lambda>\lambda^{'}$. Hence, $f$ is $q$-tame.
\end{proof}
\begin{prpstn}
\label{qtamenessEstim}
Let $f:[0,1]^{d}\rightarrow\mathbb{R}$ then, for all $s\in\mathbb{N}$ $\widehat{\mathbb{V}}_{0,f}$ is $q$-tame. 
\end{prpstn}
\begin{proof}
Let $h>0$ and $\lambda\in \mathbb{R}$. By construction $\widehat{\mathcal{F}}_{\lambda}$ is a union of cubes of the regular grid $G(h)$, thus $H_{0}(\widehat{\mathcal{F}}_{\lambda})$ is finite dimensional. Thus $\widehat{\mathbb{V}}_{0,f}$ is $q$-tame by Theorem 1.1 of \cite{Crawley2012}.
\end{proof}
\end{document}